\numberwithin{equation}{section}
\renewcommand{\subsection}{\@startsection
{subsection}{2}{0mm}{\baselineskip}{-0.25cm}
{\normalfont\normalsize\bf}}
\newtheorem{theorem}{Theorem}[section]
\newtheorem{proposition}[theorem]{Proposition}
\newtheorem{lemma}[theorem]{Lemma}
\newtheorem{corollary}[theorem]{Corollary}
\theoremstyle{definition}
\newtheorem{question}[theorem]{Question}
\newtheorem{example}[theorem]{Example}
   \theoremstyle{remark}
\newtheorem{remark}[theorem]{Remark}
\newcommand{\F}{{\mathbb F}}
\newcommand{\cX}{{\mathcal X}}
\newcommand{\cZ}{{\mathcal Z}}
\newcommand{\cF}{{\mathcal F}}
\newcommand{\cH}{{\mathcal H}}
\newcommand{\cC}{{\mathcal C}}
\newcommand{\cD}{{\mathcal D}}
\newcommand{\aut}{{\rm Aut}}
\begin{document}

\author[D. Bartoli]{Daniele Bartoli}
\address{Dipartimento di Matematica e Informatica, 
Universit\`a degli Studi di Perugia,
via Vanvitelli 1, 06123 Perugia, Italy}
\email{daniele.bartoli@unipg.it}

\author[M. Montanucci]{Maria Montanucci}
\address{Dipartimento di Matematica Informatica ed Economia, Universit\`a degli Studi della Basilicata,
Campus di Macchia Romana, viale dell'Ateneo Lucano 10, 85100 Potenza, Italy}
\email{maria.montanucci@unibas.it}

\author[F.Torres]{Fernando Torres}
\address{IMECC/UNICAMP, R. S\'ergio Buarque de Holanda 651, Cidade
Universit\'aria  ``Zeferino Vaz", 13083-859, Campinas,
SP-Brazil}
\email{ftorres@ime.unicamp.br}

\thanks{{\em 2010 Math. Subj. Class.}: Primary 11G; Secondary 14G}

\thanks{{\em Keywords}: finite field, maximal curve, Hermitian curve, Galois-covering}

\title[Maximal curves Galois-covered by the Hermitian curve]{$\mathbb{F}_{p^2}$-maximal 
curves with many automorphisms are Galois-covered by the Hermitian curve}

  \begin{abstract} Let $\F$ be the finite field of order $q^2$, $q=p^t$ with $p$ prime. It is 
sometimes attribute to J.P. Serre the fact that any curve $\F$-covered by the Hermitian curve 
$\cH_{q+1}:\, y^{q+1}=x^q+x$ is also $\F$-maximal. Nevertheless, the converse is not true as the
Giulietti-Korchm\'aros example shows provided that $q>8$ and $t\equiv 0\pmod{3}$. In this 
paper, we show that if an $\F$-maximal curve $\cX$ of genus $g\geq 2$,  $q=p$, is such that  
$|\aut(\cX)|>84(g-1)$ then $\cX$ is Galois-covered by $\cH_{p+1}$. Also, we show that 
the hypothesis on the order of $\aut(\cX)$ is sharp, since there exists an $\mathbb{F}$-maximal 
curve $\cX$ for $q=p=71$ of genus $g=7$ with $|\aut(\cX)|=84(7-1)$ which is not Galois-covered 
by the Hermitian curve $\cH_{72}$.
   \end{abstract}

\maketitle

   \section{Introduction}\label{s1}

Throughout this paper, by a {\em curve} we shall mean a projective, non-singular, 
geometrically irreducible algebraic curve defined over a finite field $\F=\F_{q^2}$ of order 
$q^2$. A curve $\cX$ of genus $g=g(\cX)$ is called {\em $\F$-maximal} if the number of its 
$\F$-rational points attains the Hasse-Weil upper bound; that is to say
   $$ 
   |\cX(\F)|=q^2+1+2qg\, . 
   $$ 
Ihara \cite{Ih} proved that if $\cX$ is  $\F$-maximal of genus $g$ then $g\leq q(q-1)/2$. Also, equality holds if and only if $\cX$ is $\F$-isomorphic to the Hermitian curve, namely the plane curve 
${\cH}_{q+1}:\, y^{q+1}=x^q+x$; see R\"uck and Stichtenoth \cite{RS}. For surveys on maximal
curves we refer the readers to \cite{FT1, Garcia, Garcia1, GS1, VDG1, VDG2,FT, FT2, RS, SX}. 

Maximal curves have also been investigated for their applications in Coding theory. In fact, Algebraic-Geometric codes constructed from maximal curves have the largest possible minimum distances with the respect to other parameters since they have the maximum number of rational points compared with their genus; see \cite{
Hansen,Matthews1,Matthews2,Stichtenoth1,Stichtenoth2,Tiersma,XC2002,XL2000,YK1992}. Also, many examples of  maximal curves  have large automorphism groups and codes constructed from them inherit a large number of symmetries and therefore  can have good performance in encoding \cite{HLS1995} and decoding \cite{Joyner}.


In general it is quite difficult to prove the maximality of a given curve $\mathcal{C}$. A  well-known approach  is based on a  result of Kleiman \cite{Kleiman},  sometimes attributed to Serre (see \cite{Lachaud}), stating that any non-singular curve which is $\F$-covered by an $\F$-maximal curve is also $\F$-maximal. 
Concrete examples of $\F$-maximal curves which all are Galois-covered by $\cH_{q+1}$ can be found e.g. in \cite{GSX,CKT,MZq,MX}. 

For a long time it has been conjectured that all $\mathbb{F}$-maximal curves are covered by a Hermitian
curve; see e.g. \cite{Geer}.
This conjecture was disproved by Giulietti and Korchm\'aros \cite{GK,T^3}, who exhibited an example of $\mathbb{F}$-maximal curve $\cC$, with $q=p^{3h}>8$,  $p$ is a prime,  non-covered by the Hermitian curve. 
The curve $\cC$ has a large automorphism group whose size exceeds  the Classical Hurwitz bound  $84(g(\cC)-1)$.

Up to now the Giulietti-Korchm\'aros curve and some of its quotients are the only known examples of  $\F$-maximal curves which are not covered by the Hermitian curve; see \cite{T^3,GQZ}. It is somehow natural to ask whether there exist other curves with this feature,   also when $q \ne p^{3h}$ for any $h \geq 1$.  

In this paper we deal with the case $q=p$. Our main result is summarized in Theorem \ref{main}. We show that an $\mathbb{F}_{p^2}$-maximal curve with a large automorphism group and not Galois-covered by the Hermitian curve $\cH_{p+1}$  cannot exist. This emphasizes once more the importance of the Giulietti-Korchm\'aros curve among the class of maximal curves.

  \begin{theorem}\label{main} Let $p$ be a prime and $\F=\mathbb{F}_{p^2}$ the finite field 
of order $p^2.$ Let $\cX$ be an $\F$-maximal curve with genus $g=g(\cX) \geq 2,$ and 
$\aut(\cX)$ the $\F$-automorphism group of $\cX.$ If $\aut(\cX)$ does not satisfy the 
classical Hurwitz bound$;$ i$.$e$.,$ $|\aut(\cX)|> 84(g-1),$ then $\cX$ is Galois-covered by the 
Hermitian curve $\cH_{p+1}: y^{p+1}=x^p+x$ over $\F.$
  \end{theorem}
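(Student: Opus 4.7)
The plan is to combine a classification theorem for curves with automorphism groups exceeding the Hurwitz bound with the rigidity imposed by $\F$-maximality. In positive characteristic the bound $|\aut(\cX)| \leq 84(g-1)$ can fail only if $p \mid |\aut(\cX)|$, since otherwise all ramification is tame and classical Riemann--Hurwitz applies. My first step would be to fix a Sylow $p$-subgroup $S_p \leq \aut(\cX)$ and analyze the cover $\cX \to \cX/S_p$. Henn--Stichtenoth-type classification results, together with more recent refinements, show that under $|\aut(\cX)| > 84(g-1)$ the quotient $\cX/S_p$ is rational and $\cX$ belongs to a short list of families: Hermitian curves and their Galois subcovers, Suzuki and Ree curves, and specific Artin--Schreier covers $y^p - y = f(x)$.

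Second, I would use $q=p$ together with $\F$-maximality to prune this list. The Suzuki and Ree families live naturally over fields of characteristic $2$ and $3$ with $q$ an odd power, so they are incompatible with $q=p$ prime. For an $\F$-maximal curve every Frobenius eigenvalue on the Jacobian equals $-p$; combined with the Deuring--Shafarevich formula applied to $\cX \to \cX/S_p$, this pins down both the number of $S_p$-fixed places and the exponent of any surviving Artin--Schreier model, forcing them to match those of $\cH_{p+1}$ or one of its Galois quotients.

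Third, for each surviving candidate I would exhibit an explicit Galois embedding into $\cH_{p+1}$. Since $\aut(\cH_{p+1}) \cong \PGU(3,p)$ acts transitively on the $p^3+1$ $\F$-rational points, the $\F$-maximal Galois subcovers of $\cH_{p+1}$ are exactly the curves $\cH_{p+1}/K$ as $K$ ranges over subgroups of $\PGU(3,p)$. Matching $g(\cX)$ against $g(\cH_{p+1}/K)$ and $|\aut(\cX)|$ against $|N_{\PGU(3,p)}(K)/K|$, I would identify $\cX \cong \cH_{p+1}/K$ for a suitable $K$, using the known subgroup structure of $\PGU(3,p)$.

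The main obstacle is the case analysis of Artin--Schreier covers $\cX$ with $|\aut(\cX)| > 84(g-1)$ that are $\F$-maximal: one must rule out a Giulietti--Korchm\'aros-type phenomenon in which a maximal curve with a very large automorphism group fails to be Galois-covered by $\cH_{p+1}$. The primality of $q=p$ is exactly what blocks the compatibility conditions underlying the Giulietti--Korchm\'aros construction, which requires $t \equiv 0 \pmod 3$; the sharpness remark about $p=71$, $g=7$ in the abstract signals that the bound $84(g-1)$ is tight and leaves essentially no slack for further families.
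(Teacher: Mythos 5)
Your overall instinct---that $|\aut(\cX)|>84(g-1)$ forces $p\mid|\aut(\cX)|$, so one should study the Sylow $p$-subgroup and the cover $\cX\to\cX/S_p$, using maximality (zero $p$-rank, Frobenius eigenvalues $-p$) to constrain the ramification---matches the spirit of the paper. But the load-bearing step of your argument does not hold. There is no ``Henn--Stichtenoth-type'' classification valid under the hypothesis $|\aut(\cX)|>84(g-1)$: Henn's theorem and its refinement (\cite[Thm.~11.126]{HKT}) require $|\aut(\cX)|\geq 8g^3$, a much stronger bound, and under the mere failure of the Hurwitz bound one only obtains the \emph{short-orbit structure} of $\aut(\cX)$ acting on $\cX$ (at most three short orbits, of the four types listed in the paper's Theorem~\ref{thmHurwitz}), together with rationality of $\cX/\aut(\cX)$ --- not rationality of $\cX/S_p$ and not a list of candidate curves. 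In fact, establishing that some order-$p$ subgroup has rational quotient is precisely the hard part: the paper's Lemma~\ref{1orbit} is entirely devoted to the alternative $a_2=g(\cX/H)>0$, which your proposal implicitly assumes away. So the step ``$\cX/S_p$ is rational and $\cX$ belongs to a short list'' is a gap, not a citation.

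Two further ingredients that your proposal is missing are what actually close the argument in the paper. First, the Garcia--Tafazolian criterion (Theorem~\ref{hm}): an $\F$-maximal curve with an abelian subgroup of order $q=p$ whose quotient is rational is $\F$-isomorphic to $\cH_m:y^m=x^p+x$ for some $m\mid(p+1)$, and each $\cH_m$ is visibly a Galois subcover of $\cH_{p+1}$ (Lemma~\ref{hmcovered}); this replaces your proposed matching of $(g,|\aut|)$ against quotients $\cH_{p+1}/K$ over the subgroup lattice of $\PGU(3,p)$, which would be far more laborious and is not obviously conclusive, since genus and automorphism-group order do not determine a maximal curve in general. Second, the genus spectrum of maximal curves (Lemma~\ref{resmaxim}): $g\leq\lfloor(p^2-p+4)/6\rfloor$ unless $g$ equals one of the top two values, which characterize $\cH_{p+1}$ and its degree-$2$ quotient. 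The paper's case analysis of the orbit types uses Riemann--Hurwitz to show that whenever $\cX/H$ fails to be rational the ramification invariants $a_1,a_2$ force $g$ past this threshold, yielding either a contradiction or one of the two characterized curves. Without these two tools your plan has no mechanism for ruling out a hypothetical exotic maximal curve with large non-tame automorphism group --- exactly the Giulietti--Korchm\'aros-type phenomenon you correctly identify as the main obstacle. (The case $p\leq 5$, which the paper settles by the explicit classification of $\F_{25}$-maximal curves, is also unaddressed, though that is a minor omission.)
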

  
The  bound $Aut(\cX)>84(g-1)$ in Theorem \ref{main} is sharp. As we show in Theorem  \ref{nonga}  the so-called {\em Fricke-Macbeath} curve  over $\F_{{71}^2}$ is $\mathbb{F}$-maximal and not covered by $\mathcal{H}_{71}$. Such a curve is named after Robert Fricke who first studied it as Riemann surface in the early 1899; see \cite{Fricke}. Since the size of its automorphism group is exactly $84(g-1)$, this shows that the bound in  Theorem \ref{main} is sharp and it cannot be further improved in general.

   \section{Preliminary results}\label{s2}
   
Let $\cX$ be a curve defined over the finite field $\F=\F_{q^2}$ of order $q^2$ with $q=p^t$ 
a power of a prime $p$. Let $\aut(\cX)$ be the $\F$-automorphism group of $\cX$. For $m\geq 
1$ a divisor of $q+1$, we let $\cH_m$ denote the non-singular model of the plane curve
   $$
   y^m=x^q+x\, .
   $$
Notice that $\cH_{q+1}$ is the aforementioned Hermitian curve. We start by recalling a 
characterization of $\cH_m$ involving automorphisms of curves. This characterization is due to 
Garcia and Tafazolian, see \cite[Thm. 5.2]{GT} and \cite[Thm, 4.1]{TT}.
    \begin{theorem}\label{hm} Let $\cX$ be an $\F$-maximal curve$.$ 
  Suppose that there exists an abelian subgroup $H$ of $\aut(\cX)$ 
   whose order equals 
$q$ such that the quotient curve $\cX/H$ is rational$.$ Then there exists a divisor $m$ of 
$q+1$ such that $\cX$ is $\F$-isomorphic to the curve $\cH_m$ above$.$
 
   \end{theorem}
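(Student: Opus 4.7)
My plan is to combine the Deuring--Shafarevich formula with the vanishing of the $p$-rank of a maximal curve, then exploit Artin--Schreier--Witt theory together with the maximality count to identify an explicit model $y^m=x^q+x$.

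First I would extract the ramification structure of the cover $\cX\to\cX/H$. An $\F$-maximal curve has supersingular Jacobian, so its $p$-rank $\gamma(\cX)$ is zero; and $\gamma(\cX/H)=0$ because $\cX/H$ is rational. Deuring--Shafarevich applied to this Galois $p$-cover then reads
$$
\gamma(\cX)-1=q(\gamma(\cX/H)-1)+\sum_Q(q-r_Q),
$$
where $r_Q$ is the number of points of $\cX$ over a branch point $Q$. Substituting gives $\sum_Q(q-r_Q)=q-1$. Each summand is at most $q-1$, with equality iff $Q$ is totally ramified, so there is a unique branch point $Q_\infty$ with a unique totally ramified preimage $P_\infty\in\cX$. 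Because the Frobenius commutes with the $\F$-automorphism group $H$ and permutes branch points, $P_\infty\in\cX(\F)$.

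Next I would place the cover in Artin--Schreier form. Since $|H|=q$ is a $p$-power and the tame quotient $H/H_1$ embeds into $\F^\times$, one has $H=H_1$, so all of $H$ is wild inertia at $P_\infty$. Choose a rational coordinate $y$ on $\cX/H$ with $Q_\infty$ at $y=\infty$; the abelian $p$-extension $\F(\cX)/\F(y)$ is then unramified off $y=\infty$. By Artin--Schreier--Witt theory it is generated by solutions of Witt-vector equations in $y$. An analysis of the admissible ramification jumps at the single wild place, together with the genus constraint from Riemann--Hurwitz and Ihara's bound $g\leq q(q-1)/2$, rules out higher-length Witt coordinates and shows that $H$ is elementary abelian. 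Hence there exists $x\in\F(\cX)$ with
$$
x^q+x=f(y),\qquad f(y)\in\F[y],
$$
and $H$ acts as the translations $x\mapsto x+\alpha$ with $\alpha^q+\alpha=0$; subtracting off $p$-th powers we may assume $\gcd(\deg f,p)=1$.

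Finally I would use maximality to pin down $f$. A direct fibre count gives $|\cX(\F)|=1+qN$ with $N=\#\{y_0\in\F:f(y_0)\in\F_q\}$, since $x\mapsto x^q+x$ is the surjective trace $\F\to\F_q$ with fibres of size $q$. Riemann--Hurwitz at $P_\infty$ yields $g(\cX)=(q-1)(m-1)/2$ with $m:=\deg f$, and the maximality identity $|\cX(\F)|=q^2+1+2qg$ then forces $N=1+m(q-1)$. The main obstacle is to extract from this extremal count the conclusion that $f(y)=c\,y^m$ with $c\in\F^\times$ and $m\mid q+1$. I would do this via a Weil-type bound on the additive character sums $\sum_{y\in\F}\psi(f(y))$ for non-trivial characters $\psi$ of $\F$ trivial on $\F_q$: the equality case of the bound forces $f$ to be a pure monomial, and the divisibility $m\mid q+1$ emerges from the constraint that $y_0^m\in\F_q$ holds for exactly $m(q-1)$ non-zero $y_0\in\F$. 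An $\F$-rescaling of $x$ and $y$ absorbs $c$, yielding $x^q+x=y^m$, i.e.\ $\cX\cong\cH_m$ over $\F$.
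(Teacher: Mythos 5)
The paper does not actually prove Theorem \ref{hm}: it is imported from Garcia--Tafazolian \cite{GT} and Tafazolian--Torres \cite{TT}. Your opening moves coincide with how those proofs begin, and they are correct: maximality gives $p$-rank zero, Deuring--Shafarevich then forces $\sum_Q(q-r_Q)=q-1$, hence a single branch point which is totally ramified; it is fixed by the Frobenius (which normalizes $H$, all automorphisms being defined over $\F$ by Lemma \ref{1actionp2}), so it is $\F$-rational; and $H=H_0=H_1$ since $H_0/H_1$ is prime to $p$. The fibre count $N=1+m(q-1)$ and the deduction of $m\mid q+1$ \emph{once one knows} $f$ is a monomial are also fine.

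The two steps that carry the actual content, however, are asserted rather than proved, and the first one fails as stated. (i) The reduction to $H$ elementary abelian does not follow from Riemann--Hurwitz plus Ihara's bound. Take $q=p^{2}$ and $H$ cyclic of order $p^{2}$: the length-two Witt cover $(x_0,x_1)^F-(x_0,x_1)=(y,0)$ of the line is totally ramified at one point, unramified elsewhere, has $p$-rank zero by Deuring--Shafarevich, and has lower jumps $1$ and $p^{2}-p+1$, giving $g=p(p-1)^{2}/2$ --- well below Ihara's bound $p^{2}(p^{2}-1)/2$ and even below the refined bound $g_2$ of Lemma \ref{resmaxim}. So nothing in your list of constraints excludes higher Witt length; one needs a finer property of maximal curves here (in \cite{GT} this role is played by the nilpotency of the Cartier operator, $\mathcal{C}^{n}=0$ for $q=p^{n}$; alternatively one can use that the Weierstrass semigroup at the rational point $P_\infty$ contains both $q$ and $q+1$). (ii) The endgame ``equality in the Weil bound forces $f$ to be a pure monomial'' is the theorem restated, not an available fact: equality of all the sums $\sum_{y}\chi(f(y))$ with $(m-1)q$ says only that the relevant Jacobian factor is supersingular with Frobenius acting as $-q$, i.e.\ precisely that $x^{q}+x=f(y)$ is $\F$-maximal, and classifying the polynomials $f$ for which this happens is exactly what \cite{GT,TT} accomplish (again via the Cartier operator, evaluated on an explicit basis of regular differentials). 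As written, the proposal assumes the two decisive facts; the surrounding reductions are correct but are also the easy part of the argument.
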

  \begin{lemma}\label{hmcovered} The curve $\cH_m$ above is Galois-covered over $\F$ 
  by the Hermitian curve $\cH_{q+1}.$ Also$,$ $\aut(\cH_m)$ contains a cyclic subgroup $C_m$ of 
  order $m$ such that $\aut(\cH_m)/C_m \cong PGL(2,q)$ and hence $|\aut(\cH_m)| >84(g-1),$ where 
  $g=g(\cH_m)=(m-1)(q-1)/2.$ 
  \end{lemma}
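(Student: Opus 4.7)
The lemma has two parts: the Galois covering of $\cH_m$ by $\cH_{q+1}$, and the structure of $\aut(\cH_m)$.

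For the covering, the natural projection $\pi \colon \cH_{q+1} \to \cH_m$, $(x,y) \mapsto (x, y^{(q+1)/m})$, is a well-defined $\F$-morphism since $(y^{(q+1)/m})^m = y^{q+1} = x^q + x$. The deck transformations $(x, y) \mapsto (x, \zeta y)$ with $\zeta^{(q+1)/m} = 1$ are $\F$-rational (as $(q+1)/m$ divides $q^2 - 1$) and act transitively on the generic fibers of $\pi$; hence $\pi$ is a Galois cover of degree $(q+1)/m$ over $\F$.

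For the automorphism group, let $\eta \in \F$ be a primitive $m$-th root of unity and set $C_m = \langle (x,y) \mapsto (x, \eta y) \rangle$, a cyclic subgroup of $\aut(\cH_m)$ of order $m$. Since $x$ is $C_m$-invariant and $[\F(\cH_m) : \F(x)] = m$, the quotient $\cH_m / C_m$ is $\mathbb{P}^1$ with coordinate $x$. To embed $\PGL(2, q)$ into $\aut(\cH_m)/C_m$, I would exhibit three families of $\F$-automorphisms of $\cH_m$, each verified by direct substitution into $y^m = x^q + x$: first, $(x, y) \mapsto (ax, \beta y)$ for $a \in \fq^*$ and $\beta \in \F$ with $\beta^m = a$ (possible since $m \mid q^2 - 1$); second, $(x, y) \mapsto (x + b, y)$ for $b \in \F$ with $b^q + b = 0$; third, the inversion $(x, y) \mapsto (1/x,\; y/x^{(q+1)/m})$. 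Under the change of variable $x = \alpha u$ with $\alpha \in \F$ and $\alpha^q = -\alpha$, their induced actions on the $u$-line realize the standard generators of $\PGL(2, q)$.

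The main obstacle is the reverse inclusion $\aut(\cH_m)/C_m \hookrightarrow \PGL(2, q)$. Riemann--Hurwitz applied to the tame degree-$m$ cover $\cH_m \to \mathbb{P}^1$ identifies $q + 1$ totally ramified points (the $q$ points over the zeros of $x^q + x$ and the unique point over $x = \infty$) and simultaneously yields $g = (m-1)(q-1)/2$. This ramification set is intrinsic and hence $\aut(\cH_m)$-invariant; $C_m$ is the unique order-$m$ subgroup of $\aut(\cH_m)$ fixing it pointwise (proved by analyzing the divisors of $x$ and $y$ of an automorphism fixing each ramified point), and is therefore normal. The induced action of $\aut(\cH_m)/C_m$ on $\mathbb{P}^1 = \cH_m/C_m$ preserves the image of the ramification divisor, which in the $u$-coordinate is $\mathbb{P}^1(\fq)$; its setwise stabilizer in $\PGL(2, \F)$ is precisely $\PGL(2, q)$. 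Combining the two inclusions gives $\aut(\cH_m)/C_m \cong \PGL(2, q)$, and the desired inequality $|\aut(\cH_m)| = mq(q^2 - 1) > 84(g-1) = 42(m-1)(q-1) - 84$ is then a routine numerical comparison.
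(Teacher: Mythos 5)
Your treatment of the first assertion coincides with the paper's: the paper exhibits the degree-$(q+1)/m$ morphism $(x,y)\mapsto (x,y^{(q+1)/m})$ and identifies $\F(\cH_m)$ with the fixed field of the cyclic deck group $\{(x,y)\mapsto(x,\lambda y):\lambda^{(q+1)/m}=1\}$, which is exactly your argument. Your genus computation via Riemann--Hurwitz for the degree-$m$ map $x$ and the closing numerical inequality are likewise fine. The divergence is in the second assertion: the paper does not prove the structure of $\aut(\cH_m)$ at all, it simply quotes \cite[Thm.\ 12.11]{HKT}, whereas you attempt a self-contained proof. Your ``forward'' half of that proof is correct and checkable: the three families of substitutions do preserve $y^m=x^q+x$ (e.g.\ $(y/x^{(q+1)/m})^m=(x^q+x)/x^{q+1}=1/x+(1/x)^q$), and modulo $C_m$ they induce on the $u$-line the full affine group over $\mathbb{F}_q$ together with an inversion, hence all of $\PGL(2,q)$, the affine group being a point stabilizer in a $3$-transitive action and therefore maximal.

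The gap is in the reverse inclusion, which you yourself flag as the main obstacle but do not actually close. You assert that the set of the $q+1$ points totally ramified under $x$ is ``intrinsic and hence $\aut(\cH_m)$-invariant,'' and you then obtain normality of $C_m$ from its being the unique order-$m$ subgroup fixing that set pointwise. But uniqueness of a subgroup with a given property forces normality only if the property is preserved under conjugation, i.e.\ only if the $(q+1)$-point set is already known to be stable under all of $\aut(\cH_m)$; and the only visible reason for that stability is that the set is the fixed-point locus of a \emph{normal} subgroup $C_m$. As written the argument is circular, and no independent intrinsic characterization of these points (they are neither all the Weierstrass points nor all the $\F$-rational points) is supplied. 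Showing that $\aut(\cH_m)$ is no larger than $C_m.\PGL(2,q)$ --- equivalently, that every automorphism normalizes $C_m$, or that nothing exists beyond what is lifted from the normalizer in $\PGU(3,q)$ of the homology group defining the quotient --- is precisely the nontrivial content of \cite[Thm.\ 12.11]{HKT}; a standard route runs through the Sylow $p$-subgroups (of order $q$, each fixing a unique point since the curve has zero $p$-rank) rather than through the ramification of $x$. Either supply such an argument or, as the paper does, cite the theorem for this half.
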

  \begin{proof} Let $\Sigma$ and $\Sigma'$ be the corresponding $\F$-function fields of 
  $\cH_{q+1}$ and $\cH_m$ respectively. Clearly $\Sigma'$ is $\F$-covered by 
  $\Sigma$, because of the morphism $\varphi: 
(x,y) \mapsto (x,y^{(q+1)/m})$; also, $[\Sigma:\Sigma']=(q+1)/m$. Consider the 
$\F$-automorphism group $G$ of $\Sigma$ given by 
  $$
  G=\{\varphi_{\lambda}: (x,y) \mapsto (x,\lambda y) \mid \lambda^{(q+1)/m}=1\}\, ,
   $$ 
   which is of order $(q+1)/m$. Then  $\Sigma'$ is the fixed field of $G$ on $\Sigma$, as the functions $x$ and $y^{(q+1)/m}$ 
are fixed by $G$ and $|G|=[\Sigma : \Sigma']$. The claim on the structure of $\aut(\cH_m)$ 
follows from \cite[Thm. 12.11]{HKT}.
   \end{proof}
Let $\cX$ be a curve over $\F$ of genus $g=g(\cX)$, $G$ a subgroup of $\aut(\cX)$ and $P\in 
\cX$.  We recall that an {\em orbit} of $G$ (resp. a {\em stabilizer in $G$}) of $P$ is the 
set $G(P):=\{\tau(P):\tau\in G\}$ (resp. $G_P:=\{\tau\in G: \tau(P)=P\}$). The orbit $G(P)$ 
is either {\em short} or {\em long} provided that $|G_P|>1$ or not. A short orbit $G(P)$ is 
either {\em tame} or {\em non-tame} according to $p\nmid |G_P|$ or not, where $p$ is the 
characteristic of $\F$.

The following theorem gives the exact structure of the short orbits of $G$ on $\cX$ 
when $|G|>84(g(\cX)-1)$; see \cite[Thm 11.56, Thm. 11.126]{HKT}.
   \begin{theorem}\label{thmHurwitz} Let $\cX$ be curve over $\F$ of genus $g \geq 2$ and 
let $G \leq \aut(\cX)$ with $|G|>84(g-1).$ Then the quotient curve $\cX/G$ is rational and 
$G$ has at most three short orbits on $\cX$ as follows$:$
   \begin{enumerate}
\item[\rm(1)] Exactly three short orbits$:$ one non-tame and two tame$.$ Each point in each tame 
short orbit has stabilizer in $G$ of order $2;$ 
\item[\rm(2)] Exactly two short orbits$,$ both non-tame$;$
\item[\rm(3)] Only one short orbit which is non-tame$;$
\item[\rm(4)] Exactly two short orbits$,$ one non-tame tame and one tame. In this case $|G|< 8g^3,$ with 
the following exceptions$:$
   \begin{itemize}
\item $p=2$ and $\cX$ is isomorphic to the hyperelliptic curve $y^2+y=x^{2^k+1}$ with 
  genus $2^{k-1};$ 
\item $p>2$ and $\cX$ is isomorphic to the Roquette curve $y^2=x^q-x$ with genus $(q-1)/2;$ 
\item $p\geq 2$ and $\cX$ is isomorphic to the Hermitian curve $y^{q+1}=x^q+x$ with 
genus $q(q-1)/2;$ 
\item $p=2$, $q_0=2^s$, $q=2q_0^2$ and $\cX$ is isomorphic to the Suzuki curve 
$y^q+y=x^{q_0}(x^q+x)$ with genus $q_0(q-1).$ 
   \end{itemize}
   \end{enumerate}
   \end{theorem}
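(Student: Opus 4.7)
The plan is to push the Riemann--Hurwitz formula applied to the Galois cover $\pi\colon \cX\to \cX/G$ as far as it will go under the hypothesis $|G|>84(g-1)$, and then to identify the four exceptional curves in case (4). Writing $g'=g(\cX/G)$ and grouping the different by $G$-orbits, Riemann--Hurwitz reads
$$
\frac{2g-2}{|G|}\;=\;2g'-2+\sum_{\omega}\frac{d_P}{|G_P|},
$$
where $\omega$ runs over the short orbits of $G$ on $\cX$, $P$ is any representative of $\omega$, and $d_P$ is the different exponent at $P$; recall $d_P=|G_P|-1$ in the tame case and $d_P\geq 2(|G_P|-1)$ in the non-tame case, via Hilbert's different formula applied to the higher ramification filtration. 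By hypothesis the left-hand side is strictly less than $1/42$.

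First I would rule out $g'\geq 1$. The orbit summands are non-negative, so $2g'-2<1/42+2$, forcing $g'\leq 1$. If $g'=1$, the sum would have to be $<1/42$; but a tame orbit contributes $(|G_P|-1)/|G_P|\geq 1/2$ and a non-tame orbit contributes $>1$, while with no short orbits $\pi$ would be \'etale and force $g=1$, contradicting $g\geq 2$. Hence $\cX/G$ is rational.

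Second I would classify the short-orbit configurations. With $g'=0$ the identity becomes
$$
\sum_{\omega}\frac{d_P}{|G_P|}\;=\;2+\frac{2g-2}{|G|}\in\Bigl[\,2,\,2+\tfrac{1}{42}\,\Bigr).
$$
A tame orbit contributes $1-1/e$ with $e\geq 2$, a non-tame orbit contributes $>1$, and a fully tame configuration is excluded by the classical Hurwitz bound $|G|\leq 84(g-1)$. A small casework on how such summands can combine to land inside that window yields exactly the four possibilities (1)--(4); in case (1), the constraint $1/e_1+1/e_2+1/e_3>41/42$ forces two of the $e_i$ to equal $2$.

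The main obstacle is case (4), where one has to prove $|G|<8g^3$ outside of the four exceptional curves. The strategy is to write the non-tame stabilizer as $G_P=Q\rtimes C$, with $Q$ a Sylow $p$-subgroup and $C$ cyclic of order prime to $p$, and to compute $d_P$ from the filtration of higher ramification groups of $Q$. Reinserting this into Riemann--Hurwitz produces a Diophantine equation in $|Q|$, $|C|$, the ramification jumps of $Q$ and $g$; away from the extremal solutions the cubic bound $|G|<8g^3$ is immediate, while extremality forces $Q$ to be elementary abelian with a single jump and $\cX/Q$ to be rational. Matching the resulting Artin--Schreier data for $\cX\to \cX/Q$ with the action of $C$ then recovers the hyperelliptic, Roquette and Hermitian exceptions. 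The Suzuki case $y^q+y=x^{q_0}(x^q+x)$ is the most delicate, since $G_P$ is a non-split extension and one has to recognise $G$ as $\mathrm{Sz}(q)$ from its doubly transitive action on the long orbit; this is the deep classification step (Henn--Stichtenoth--Nakajima), and where I expect the real work to lie. The first three steps are essentially Riemann--Hurwitz bookkeeping.
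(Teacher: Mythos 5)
A preliminary remark on the target: the paper does not prove this statement at all --- it is quoted from \cite[Thm.\ 11.56, Thm.\ 11.126]{HKT} --- so there is no internal proof to compare against, and your proposal is in effect a reconstruction of the argument in that reference. Your Riemann--Hurwitz bookkeeping is sound and is essentially the standard proof of the first half of the theorem: from $(2g-2)/|G|=2g'-2+\sum_{\omega}d_P/|G_P|<1/42$ you correctly force $g'=0$, and the window $\sum_{\omega}d_P/|G_P|\in(2,\,2+\tfrac{1}{42})$, combined with ``a tame orbit contributes $1-1/e\in[1/2,1)$ and a non-tame orbit contributes at least $1$,'' does yield exactly the configurations (1)--(4), with the order-$2$ conclusion in case (1) following from $1/e_1+1/e_2>41/42$. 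One local error: your inequality $d_P\geq 2(|G_P|-1)$ for non-tame $P$ is false when $G_P$ has a nontrivial tame part (for $p=2$, $|G_P|=6$, $|G_P^{(1)}|=2$ one only gets $d_P\geq 6$, not $10$). What Hilbert's different formula actually gives is $d_P\geq(|G_P|-1)+(|G_P^{(1)}|-1)\geq |G_P|$, i.e.\ a contribution $\geq 1$ (with equality possible only in characteristic $2$); since that weaker bound is all your casework uses, the enumeration survives, but the stated inequality should be corrected.

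The genuine gap is case (4). The assertion $|G|<8g^3$ outside the four listed curves is the content of \cite[Thm.\ 11.126]{HKT} (Henn's classification, resting on Stichtenoth's bounds relating $|G_P^{(1)}|$ to $g$ and on recognising the groups acting $2$-transitively on the non-tame orbit), and your proposal only sketches a strategy for it: the claim that ``away from the extremal solutions the cubic bound is immediate'' is not substantiated, and identifying the Roquette, Hermitian and Suzuki curves from the extremal ramification data is a substantial classification argument, not a routine matching of Artin--Schreier equations. You flag this yourself and locate the difficulty exactly where it lies, which is to your credit; but as written the proposal establishes the orbit-structure classification and the rationality of $\cX/G$, and not the quantitative statement in (4). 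Since the paper itself imports that part as a black box, the sensible repair is to keep your first three steps (with the corrected different inequality) and cite \cite[Thm.\ 11.126]{HKT} for the bound and the list of exceptions rather than attempt to re-derive them.
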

The following lemma will be used to ensure that a Sylow $p$-subgroup of a non-tame 
automorphism group of an $\F$-maximal curve $\cX$ fixes exactly one 
$\F$-rational point of $\cX$.
  \begin{lemma}{\rm (\cite[Prop. 3.8, Thm. 3.10]{GSY})}\label{1actionp2} Let $\cX$ be an 
$\F$-maximal curve of genus $g\geq2.$ Then the automorphism group $\aut(\cX)$ fixes the set 
$\cX(\F)$ of $\F$-rational points$.$ Also, automorphisms of $\cX$ over the algebraic 
closure of $\F$ are always defined over $\F.$ 
   \end{lemma}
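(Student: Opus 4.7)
The plan is to exploit the fact that, since $\cX$ is $\F$-maximal, the $\F$-Frobenius $\Phi$ acts on the Jacobian $J:=J(\cX)$ as multiplication by the scalar $-q$ (the $L$-polynomial is $(1+qT)^{2g}$ and $\Phi$ is semisimple on $J$). Fix any $P_{0}\in\cX(\F)$, which exists by $\F$-maximality, and let $j\colon\cX\hookrightarrow J$, $P\mapsto [P-P_{0}]$, be the Abel--Jacobi embedding. Applying $\Phi+q=0$ on $J$ to the class $[P-P_{0}]$ yields the fundamental linear equivalence
$$\Phi(P)+qP\sim(q+1)P_{0}\qquad(P\in\cX),$$
so $P\in\cX(\F)$ iff $(q+1)(P-P_{0})$ is principal iff $j(P)\in J[q+1]$.

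For the second assertion, let $\sigma\in\aut(\cX/\overline{\F})$ and form its Frobenius conjugate $\sigma^{\Phi}:=\Phi\sigma\Phi^{-1}$; $\sigma$ is defined over $\F$ precisely when $\sigma^{\Phi}=\sigma$. On $J$ the induced action satisfies $(\sigma^{\Phi})_{*}=\Phi\sigma_{*}\Phi^{-1}=\sigma_{*}$, because $\Phi=-q$ is a scalar in $\mathrm{End}(J)$ and hence central. Consequently $\psi:=\sigma^{-1}\sigma^{\Phi}\in\aut(\cX/\overline{\F})$ induces the identity on $J$.

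It then remains to invoke the injectivity of $\aut(\cX)\hookrightarrow\aut(J)$ valid for $g\geq 2$: if $\psi_{*}=\mathrm{id}_{J}$, the identity $j(\psi(P))=\psi_{*}j(P)+j(\psi(P_{0}))=j(P)+j(\psi(P_{0}))$ gives $j(\psi(\cX))=j(\cX)+j(\psi(P_{0}))$, and the rigidity of the Abel--Jacobi image in $J$ (its translation stabilizer is trivial for $g\geq 2$) forces $j(\psi(P_{0}))=0$ and thus $\psi=\mathrm{id}$. Therefore $\sigma=\sigma^{\Phi}$, every automorphism over $\overline{\F}$ is $\F$-rational, and the first assertion follows because any $\F$-automorphism commutes with $\Phi$ and so preserves the Frobenius-fixed set $\cX(\F)$. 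The main obstacle is this last rigidity/injectivity step for $g\geq 2$; once it is in hand, the rest is formal from the centrality of $\Phi$ in $\mathrm{End}(J)$ afforded by $\F$-maximality.
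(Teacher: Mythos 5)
Your argument is correct, and it is essentially the argument of the source the paper cites: the paper itself gives no proof of this lemma, referring instead to \cite[Prop.\ 3.8, Thm.\ 3.10]{GSY}, and the route you take --- maximality forces the characteristic polynomial of Frobenius to be $(T+q)^{2g}$, semisimplicity then gives $\Phi=-q$ in ${\rm End}(J)$, centrality of $-q$ makes $\sigma$ and its Galois conjugate induce the same endomorphism of $J$, and faithfulness of $\aut(\cX)\to\aut(J)$ for $g\geq 2$ finishes --- is exactly the standard one. Two small points deserve more care in a written version. First, $\Phi^{-1}$ does not exist as a morphism of $\cX$ (the Frobenius morphism is purely inseparable of degree $q^2$), so $\sigma^{\Phi}$ should be defined as the Galois conjugate $\sigma^{(q^2)}$, characterized by $\Phi\circ\sigma=\sigma^{(q^2)}\circ\Phi$; on $J$ one may then invert $\Phi=-q$ in ${\rm End}(J)\otimes\mathbb{Q}$ to conclude $(\sigma^{(q^2)})_{*}=\sigma_{*}$. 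Second, the rigidity step you isolate --- triviality of the translation stabilizer of $W_1=j(\cX)$ for $g\geq 2$, equivalently faithfulness of the action of $\aut(\cX)$ on the Jacobian --- is a genuine standard fact (e.g.\ via $W_{g-1}=W_1+\cdots+W_1$ and triviality of the stabilizer of the theta divisor), but it is the load-bearing input and should be cited rather than merely asserted.
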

  \begin{corollary}\label{actionp2} Let $p$ denote the characteristic of the finite field $\F=\F_{q^2}$ where $q=p^t$ and let $\cX$ be an $\F$-maximal curve with 
$g=g(\cX) \geq 2$ such that $p \mid |\aut(\cX)|$. If $S$ is a Sylow $p$-subgroup of 
$\aut(\cX),$ then $S$ fixes exactly one point $P\in \cX(\F)$ and acts semiregularly on the 
set of the remaining $\F$-rational points of $\cX.$ In particular$,$ if $p \nmid g$, then every 
$\sigma \in S$ has order at most equal to $q.$
  \end{corollary}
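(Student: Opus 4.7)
The plan is to proceed in three stages: first, an $\F$-rational fixed point of $S$ by mod-$p$ counting on $\cX(\F)$; second, uniqueness and semiregularity by a Serre--Kleiman reduction together with a count of $\F$-rational fibres; third, the order bound from the semiregular action. For the first stage, Lemma \ref{1actionp2} says that $S$ acts on the finite set $\cX(\F)$. Since $|\cX(\F)|=q^2+1+2qg$ and $q=p^t$, we have $|\cX(\F)|\equiv 1\pmod p$; and since every orbit of a $p$-group has $p$-power size, the fixed-point count on $\cX(\F)$ is $\equiv 1\pmod p$, in particular at least one point $P\in\cX(\F)$ is fixed by $S$.

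For the second stage I would reduce to an element $\sigma\in S$ of order $p$ and show that it has exactly one fixed point on $\cX$. By the Serre--Kleiman subcover theorem the quotient $\cY:=\cX/\la\sigma\ra$ is again $\F$-maximal, of some genus $g'$, so $|\cY(\F)|=q^2+1+2qg'$; the cover $\pi:\cX\to\cY$ is Galois of degree $p$, ramified precisely at the fixed points of $\sigma$, wildly and with index $p$. For an unramified $Q\in\cY(\F)$ the fibre $\pi^{-1}(Q)$ is a cyclic $\la\sigma\ra$-orbit of size $p$ on which Frobenius, commuting with $\sigma$ by Lemma \ref{1actionp2}, acts as a power of $\sigma$, so the fibre is either entirely $\F$-rational or contains no $\F$-rational point. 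Writing $a$ for the number of $\F$-rational branch points and $b$ for the number of unramified $\F$-rational points of $\cY$ whose fibre is $\F$-rational, I would obtain
\[
q^2+1+2qg \;=\; a+p\,b,
\]
so $a\equiv 1\pmod p$ and in particular $a\geq 1$. Combining this congruence with Riemann--Hurwitz for $\pi$ (each wild ramification point contributing $d_P\geq 2(p-1)$ to the different) and with the Ihara bound $g,g'\leq q(q-1)/2$ applied to both $\cX$ and $\cY$, one tightens the arithmetic constraints until $a=1$ and $\sigma$ has no further, non-$\F$-rational, fixed point on $\cX$. The fixed point $P$ from the first stage is then the unique fixed point of every order-$p$ element of $S$; passing from $\sigma\in S$ of $p$-power order to $\sigma^p$ and iterating, $P$ is the unique fixed point of every nonidentity element of $S$, which gives both uniqueness and semiregularity on $\cX(\F)\setminus\{P\}$.

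For the third stage, if $\sigma\in S$ has order $p^m$ then semiregularity forces every $\sigma$-orbit on $\cX(\F)\setminus\{P\}$ to have size exactly $p^m$, so $p^m$ divides $|\cX(\F)|-1=q(q+2g)$; when $p\nmid g$ (with $p$ odd, so $p\nmid 2g$ while $p\mid q$), $\gcd(q+2g,p)=1$ and therefore $p^m\mid q$, i.e.\ $|\sigma|\leq q$. The main obstacle in this scheme is the passage from the congruence $a\equiv 1\pmod p$ to the equality $a=1$ in the second stage: the congruence alone permits $a\in\{1,p+1,2p+1,\ldots\}$, and to eliminate the larger possibilities one must fully exploit Riemann--Hurwitz together with the simultaneous $\F$-maximality of both $\cX$ and the subcover $\cY$. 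Controlling the wild contribution to the different at each ramification point, which may strictly exceed the bound $2(p-1)$, is the technically delicate step.
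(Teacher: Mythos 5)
Your first stage (existence of a fixed $\F$-rational point by counting $|\cX(\F)|\equiv 1\pmod p$ on $S$-orbits) coincides with the paper's, and your third stage is a correct deduction once semiregularity is in hand (modulo a small caveat at $p=2$, where $q+2g$ is even and your gcd argument only yields $|\sigma|\le 2q$ without an extra word). The problem is the second stage, which is where the entire content of the corollary lies, and there you have a genuine gap that you yourself flag: you never actually get from $a\equiv 1\pmod p$ to $a=1$. The plan of ``fully exploiting Riemann--Hurwitz together with the simultaneous maximality of $\cX$ and $\cY$'' does not close as stated. Riemann--Hurwitz for the wildly ramified degree-$p$ cover reads $2g-2=p(2g'-2)+\sum_P d_P$ with $d_P=(p-1)+\sum_{i\ge 1}(|G_P^{(i)}|-1)$, and the higher ramification jumps are not controlled by any of the point-count data you have; moreover $g'$ is itself unknown, so maximality of $\cY$ only trades one unknown ($|\cY(\F)|$) for another ($g'$). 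The resulting system of one equation and several inequalities does not exclude $a=p+1$, and I do not see how to make it do so without importing exactly the ingredient you are missing.

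That ingredient is the vanishing of the $p$-rank (Hasse--Witt invariant) of an $\F$-maximal curve: since all Frobenius eigenvalues equal $-q$, the $p$-rank $\gamma(\cX)$ is $0$, and likewise $\gamma(\cY)=0$ for the maximal quotient. The Deuring--Shafarevich formula $\gamma(\cX)-1=p\,(\gamma(\cY)-1)+a(p-1)$ then gives $-1=-p+a(p-1)$, i.e.\ $a=1$ \emph{exactly}, with no congruence ambiguity and no dependence on the higher ramification filtration. This is what the paper does, in packaged form: it observes that $\cX$ has zero $p$-rank and invokes \cite[Lemma 11.129]{HKT}, which states precisely that a $p$-subgroup of automorphisms of a zero $p$-rank curve fixes exactly one point and acts semiregularly off it (and also yields the order bound in the last sentence of the statement). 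So your overall architecture is salvageable, but the arithmetic machinery you propose for the central step should be replaced by the $p$-rank/Deuring--Shafarevich argument; as written, the proof is incomplete at its crucial point.
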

  \begin{proof} From Lemma \ref{1actionp2}, $S$ acts on the set $\cX(\F)$ of $\F$-rational 
points of $\cX$. Since $|\cX(\mathbb{F})| \equiv 1 \pmod{p}$, $S$ must fix at least a 
point $P \in \cX(\mathbb{F})$. Also, $\cX$ has zero $p$-rank and hence the claim 
follows from \cite[Lemma 11.129]{HKT}.
   \end{proof}
The following lemma provides a characterization of the Hermitian curve $\cH_{p+1}$ in terms 
of the order of a Sylow $p$-subgroup of its full automorphism group.
  \begin{lemma}\label{psy1} Let $p$ be a prime and $\F$ a field of order $p^2.$ Let $\cX$ be 
an $\F$-maximal curve of genus $g$ such that there exists $G \leq \aut(\cX)$ with $p \mid 
|G|.$ Then we can write
  \begin{equation} \label{eq1}
g=\frac{a_1(p-1)}{2}+a_2p\, ,
  \end{equation}
where $a_1$ is a non-negative integer such that ${G_P}^{(a_1+1)}$ is the last non-trivial 
ramification group at a point $P \in \cX$ and $a_2=g(\cX/H),$ where $H$ is a subgroup of 
$G$ of order $p.$ Also, $p^2 \nmid |G|$ unless $\cX$ is $\F$-isomorphic to 
the Hermitian curve $\cH_{p+1}.$
  \end{lemma}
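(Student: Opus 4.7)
The plan is to derive the genus formula \eqref{eq1} from Riemann--Hurwitz applied to a carefully chosen subgroup of order $p$, and then to deduce the characterization of $\cH_{p+1}$ by iterating this construction and appealing to Theorem \ref{hm} together with the Sylow-$p$ structure of $\aut(\cH_m)$ provided by Lemma \ref{hmcovered}.

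For the formula, I would first invoke Corollary \ref{actionp2} to locate an $\F$-rational point $P\in\cX$ fixed by a Sylow $p$-subgroup of $G$. Write $G_P^{(i)}$ for the $i$-th (lower) ramification group of the stabilizer $G_P$ at $P$; then $G_P^{(1)}$ is a nontrivial $p$-group, and there is a largest integer $a_1\geq 0$ with $G_P^{(a_1+1)}\neq\{1\}$. Choose $H\leq G_P^{(a_1+1)}$ of order $p$. The standard compatibility of the lower ramification filtration with subgroups of the inertia yields $H_i=H\cap G_P^{(i)}$, so $H_i=H$ for $0\leq i\leq a_1+1$ and $H_i=\{1\}$ for $i\geq a_1+2$, while $P$ is the only ramified point of $\cX\to\cX/H$. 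Since $\cX/H$ is $\F$-maximal by the Kleiman--Serre theorem, $a_2:=g(\cX/H)$ is a nonnegative integer, and Riemann--Hurwitz reads
$$
2g-2=p(2a_2-2)+(a_1+2)(p-1),
$$
which immediately rearranges to $g=a_2 p+a_1(p-1)/2$, proving \eqref{eq1}.

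For the second assertion, suppose $p^2\mid|G|$ and let $S$ be a Sylow $p$-subgroup, so $|S|\geq p^2$; pick $H\leq Z(S)$ of order $p$. Then $\cX/H$ is again $\F$-maximal (Kleiman--Serre) of genus $a_2$, and $S/H$ is a nontrivial $p$-subgroup of $\aut(\cX/H)$. Iterating the construction of $H$ on $\cX/H$ and its successive quotients produces a tower of $\F$-maximal subcovers of degrees multiplying to $|S|$; combining the bound on the exponent of $S$ from Corollary \ref{actionp2} with the divisibility $|S|\mid|\cX(\F)|-1=p(p+2g)$ coming from the semiregular action on the other rational points, the tower must eventually terminate at a rational curve. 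At that stage Theorem \ref{hm} identifies the corresponding step with some $\cH_m$, $m\mid p+1$, and Lemma \ref{hmcovered} tells us that the Sylow $p$-subgroup of $\aut(\cH_m)$ has order $p$ whenever $m<p+1$ (as the Sylow $p$-subgroup of $\PGL(2,p)$ has order $p$ and $|C_m|$ is coprime to $p$); so $|S|\geq p^2$ forces the bottom of the tower to be $\cH_{p+1}$ itself, with no further $p$-layers above it, i.e., $\cX\cong\cH_{p+1}$.

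The main obstacle I anticipate is ensuring that the iteration genuinely reaches a rational quotient in a controlled way, and that above the rational base no additional maximal $p$-cover can intervene. This should follow from repeated application of \eqref{eq1} to the intermediate maximal quotients together with the divisibility constraints on $|S|$, but the bookkeeping needed to rule out all intermediate configurations (in particular any $\cH_m$ with $m<p+1$ at the bottom) is the delicate point of the argument.
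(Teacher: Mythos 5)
Your derivation of the genus formula \eqref{eq1} is correct and is essentially the paper's own argument: take $H\leq G$ of order $p$, note via Corollary \ref{actionp2} (and the zero $p$-rank of a maximal curve) that $P$ is the unique ramified point, and read off $2g-2=p(2a_2-2)+(a_1+2)(p-1)$ from Riemann--Hurwitz. Your extra care with the compatibility $H_i=H\cap G_P^{(i)}$ is a welcome clarification of a point the paper glosses over.

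The second assertion, however, is where your proposal has a genuine gap, and it is exactly the point you flag as ``delicate.'' Nothing in your argument shows that the tower $\cX\to\cX/H\to\cdots\to\cX/S$ terminates at a \emph{rational} curve: the constraints you invoke (the exponent bound from Corollary \ref{actionp2} and the divisibility $|S|\mid p^2+2gp$) only yield $|S|\leq p^2$ when $\cX\not\cong\cH_{p+1}$ (since then $2g<p(p-1)$ by R\"uck--Stichtenoth, so $p^2+2gp<p^3$); they say nothing about $g(\cX/S)$, which is precisely the quantity $a_2$ that the first part of the lemma leaves unconstrained. Without $g(\cX/S)=0$ you cannot invoke Theorem \ref{hm} at all. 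Moreover, even granting a rational bottom, your final step does not close: Theorem \ref{hm} would identify the penultimate quotient $\cX/H$ with some $\cH_m$, but the group whose Sylow $p$-subgroup you then need to control lives in $\aut(\cX)$, not in $\aut(\cH_m)$; the image $S/H$ has order $p$, which is perfectly consistent with $\aut(\cH_m)$ having Sylow $p$-subgroups of order $p$ for $m<p+1$, so no contradiction arises and nothing forces $\cX\cong\cH_{p+1}$.

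The paper avoids the tower entirely. Assuming $\cX\not\cong\cH_{p+1}$ and $|S|=p^2$, it applies Riemann--Hurwitz once to the whole cover $\cX\to\cX/S$: since $S$ fixes exactly one point $P$ and $S_0=S_1=S$ there (the quotient $S_0/S_1$ is cyclic of order prime to $p$), the different at $P$ has degree $(a_1+2)(p^2-1)+a_3p(p-1)$ for non-negative integers $a_1,a_3$ (using \cite[Lemma 11.75(v)]{HKT} for the structure of the jumps), whence
\[
2g=2g(\cX/S)\,p^2+a_1(p^2-1)+a_3p(p-1).
\]
Combined with $2g<p(p-1)$ this forces $g(\cX/S)=a_1=a_3=0$ and hence $g=0$, a contradiction. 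If you want to salvage your approach you would need an independent argument that $g(\cX/S)=0$; the direct Riemann--Hurwitz count is the cleaner route.
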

  \begin{proof} Let $H \leq G$ with $|H|=p$. From Corollary \ref{actionp2}, $H$ has exactly one 
  fixed point $P$ which is thus $\mathbb{F}$-rational. Clearly $H$ acts semiregularly on 
$\cX(\mathbb{F}) \setminus \{P\}$ and so $|H| \mid (p^2+2gp)$. From the 
Riemann-Hurwitz formula $2g-2=p(2a_2-2)+(a_1+2)(p-1)$, so that
   $$
   g=\frac{p(2a_2-2)+(a_1+2)(p-1)+2}{2}=\frac{a_1(p-1)}{2}+a_2p\, .
   $$
If $\cX \cong \cH_{p+1}$, there is nothing to prove; thus we assume that 
$\cX \not\cong \cH_{p+1}$. From \cite{RS}, this implies that $p^2+2gp<p^2+p^2(p-1)=p^3$, 
as $2g<p(p-1)$. Thus, if $S$ is a Sylow $p$-subgroup of $\aut(\cX)$ containing $H$, 
either $|S|=p^2$ or $S=H$. Assume that $|S|=p^2$. From the Riemann-Hurwitz formula
  $$
  2g-2=p^2(2g(\cX / S)-2)+(a_1+2)(p^2-1)+a_3p(p-1)\, ,
  $$
for some non-negative integer $a_3$. In fact, if $i,j \geq 1$ are such 
that $G_P^{(i+1)} \ne G_p^{(i)}$ and $G_P^{(j+1)} \ne G_p^{(j)}$ 
then $i-j \equiv 0 \mod p$; see \cite[Lemma 11.75 (v)]{HKT}. Thus,
  $$
  p(p-1)>2g=2g(\cX/S)p^2+a_1(p^2-1)+a_3p(p-1)\, .
  $$
By direct checking, since $a_1,a_3$ and $g(\cX/S)$ are non-negative, this implies that 
$g(\cX / S)=a_1=a_3=0$ and hence $g=0$, a contradiction.
  \end{proof}
In the following Lemma the known results on $\mathbb{F}$-maximal curves of high genus
are collected; see \cite{Ih, RS, FT, FT2, KT}.
   \begin{lemma}\label{resmaxim} Let $\cX$ be an $\F$-maximal curve of genus $g=g(\cX),$ where 
  $\F=\F_{q^2}.$ 
  \begin{enumerate} 
  \item[\rm(1)] $g\leq g_2:=\lfloor (q^2-q+4)/6\rfloor,$ or $g=g_1:=\lfloor(q-1)^2/4\rfloor,$ or 
  $g=g_0:=q(q-1)/2;$
  \item[\rm(2)] $g=g_0$ if and only if $\cX$ is $\F$-isomorphic to $\cH_{q+1};$
  \item[\rm(3)] $g=g_1$ if and only if $\cX$ is $\F$-isomorphic to $\cX_{(q+1)/2}$ (resp. $y^{q+1}=
  x^{q/2}+\ldots+x$) if $q$ is odd (resp. $q$ even)$.$ In particular$,$ in this case 
$\cX$ is a cyclic quotient of the Hermitian curve $\cH_{q+1}$ of order $2.$ 
  \end{enumerate}
  \end{lemma}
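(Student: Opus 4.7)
The lemma assembles four classical results on $\F_{q^2}$-maximal curves, so my plan is to split it into items and invoke the appropriate structural theorem for each. A uniform tool throughout is Ihara's ``fundamental equation'' for an $\F$-maximal curve: on $\mathrm{Jac}(\cX)$ the Frobenius acts as multiplication by $-q$, so that $q+1$ is always a nongap of the Weierstrass semigroup at every $\F$-rational point $P$. In particular, for $P\in\cX(\F)$ every integer of the form $iq+j(q+1)$ with $i,j\geq 0$ is a nongap at $P$, and this rigid structure is what ultimately forces the curve into one of the three allowed ``slots''.

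First I would deal with the top of the spectrum. Counting $\F_{q^4}$-rational points via the Weil inequality and comparing with $|\cX(\F)|=q^2+1+2qg$ yields Ihara's inequality $g\leq g_0=q(q-1)/2$, which is the first branch of (1). For (2), under $g=g_0$ the nongaps at an $\F$-rational point $P$ include $q$ and $q+1$, forcing the semigroup to be exactly $\la q,q+1\ra$; the linear series $|(q+1)P|$ then produces a plane model of degree $q+1$ which one matches with the Hermitian equation to obtain the $\F$-isomorphism $\cX\cong\cH_{q+1}$, following R\"uck-Stichtenoth \cite{RS}.

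Next, the middle of the spectrum. For $g<g_0$ I would apply the Fuhrmann-Torres sharpening: a refined analysis of which Weierstrass semigroups at $P$ are compatible with the fundamental equation excludes every genus in the open interval $(g_1,g_0)$, giving $g\leq g_1=\lfloor(q-1)^2/4\rfloor$ and establishing the second branch of (1). For (3), equality forces a specific semigroup generated (for $q$ odd) by $(q+1)/2$ and $q$, which in turn determines the plane model $y^{(q+1)/2}=x^q+x$ (resp.\ $y^{q+1}=x^{q/2}+\cdots+x$ for $q$ even) and exhibits $\cX$ as the fixed field of an involution of $\cH_{q+1}$, hence a cyclic quotient of degree $2$. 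Finally, below $g_1$ I would invoke the Korchm\'aros-Torres gap theorem, which through the same semigroup machinery yields $g\leq g_2=\lfloor(q^2-q+4)/6\rfloor$ and completes (1).

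The main obstacle, in a genuinely self-contained treatment, would be the rigidity statement in (3). The bound $g\leq g_1$ follows from short numerical considerations on nongaps, but pinning down $\cX$ up to $\F$-isomorphism and realizing it concretely as a cyclic double cover of $\cH_{q+1}$ requires showing that the degree-$2$ map implicit in the semigroup actually descends from an involution of $\cH_{q+1}$ itself rather than from some unrelated double cover. Since the lemma is stated as a compilation, the clean plan is simply to quote \cite{Ih} for Ihara's bound, \cite{RS} for the characterization at $g_0$, \cite{FT,FT2} for the gap at $g_1$ and its equality case, and \cite{KT} for the gap at $g_2$.
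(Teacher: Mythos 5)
Your proposal is correct and matches the paper's treatment: the paper states this lemma as a compilation of known results and simply cites \cite{Ih} for the bound $g\leq g_0$, \cite{RS} for the characterization at $g_0$, \cite{FT, FT2} for the gap at $g_1$ and its equality case, and \cite{KT} for the bound $g\leq g_2$, exactly as you propose. Your sketch of the underlying semigroup/fundamental-equation arguments is an accurate summary of what those references contain, but no new proof is needed or given in the paper.
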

  \begin{corollary}\label{bounda1a2} Let $\cX$ be an $\F$-maximal curve of genus $g$ with 
$\F=\F_{p^2}$, $ p\geq 7$ a prime$.$ Let $a_1$ and $a_2$ be as in Lemma \ref{psy1} 
and suppose that one of the following conditions holds$:$
   \begin{enumerate}
\item[\rm(1)] $a_1>\lfloor (p^2-p+4)/3(p-1) \rfloor,$
\item[\rm(2)] $a_2>\lfloor (p^2-p+4)/6p \rfloor,$
\item[\rm(3)] $a_1+a_2 \geq (p-1)/2$ but $a_2 \leq \lfloor (p^2-p+4)/6p \rfloor.$
  \end{enumerate}
Then $\cX$ is Galois-covered by the Hermitian curve $\cH_{p+1}$.
\end{corollary}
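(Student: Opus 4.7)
The plan is to show that each of the three hypotheses on $(a_1,a_2)$ forces the genus to satisfy $g > g_2 := \lfloor (p^2-p+4)/6\rfloor$. Once this is established, Lemma~\ref{resmaxim}(1) leaves only the two possibilities $g=g_1$ or $g=g_0$. Since $p\geq 7$ is odd, parts (2) and (3) of the same lemma then identify $\cX$ (up to $\F$-isomorphism) with $\cH_{p+1}$ itself or with $\cH_{(p+1)/2}$, and Lemma~\ref{hmcovered} asserts that both curves are Galois-covered over $\F$ by the Hermitian curve $\cH_{p+1}$. This yields the conclusion.

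To carry out the bound on $g$, I would use the formula $g = a_1(p-1)/2 + a_2 p$ from Lemma~\ref{psy1}, noting that $a_1(p-1)/2$ is an integer since $p$ is odd. Cases (1) and (2) are essentially by inspection: in case (1), the strict inequality $a_1 > \lfloor (p^2-p+4)/(3(p-1))\rfloor$ rearranges to $a_1(p-1)/2 > (p^2-p+4)/6$, and since $a_1(p-1)/2$ is an integer this already gives $g \geq a_1(p-1)/2 > g_2$. Case (2) is exactly analogous, using instead the summand $a_2 p > (p^2-p+4)/6 \geq g_2$.

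The only substantive computation is case (3). Here the upper bound on $a_2$ ensures that $a_2 < (p-1)/2$, so the inequality $a_1 \geq (p-1)/2 - a_2$ furnished by $a_1 + a_2 \geq (p-1)/2$ is a legitimate lower bound on the non-negative integer $a_1$. Substituting into the genus formula gives
\begin{equation*}
g \;\geq\; \Bigl(\frac{p-1}{2}-a_2\Bigr)\frac{p-1}{2} + a_2 p \;=\; \frac{(p-1)^2}{4} + \frac{a_2(p+1)}{2} \;\geq\; \frac{(p-1)^2}{4}.
\end{equation*}
The key arithmetic identity
\begin{equation*}
\frac{(p-1)^2}{4} - \frac{p^2-p+4}{6} \;=\; \frac{(p-5)(p+1)}{12}
\end{equation*}
is strictly positive precisely when $p\geq 7$, which gives $g > g_2$ and pinpoints where the hypothesis $p\geq 7$ is actually needed.

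The main (and essentially only) obstacle is the numerical identity in case (3); cases (1) and (2) are routine rearrangements and the closing argument via Lemma~\ref{resmaxim} and Lemma~\ref{hmcovered} is immediate. I expect no surprises beyond checking that the upper bound on $a_2$ in case (3) is indeed only needed to keep $a_1\geq (p-1)/2 - a_2$ non-negative, and that it serves to make the three cases a non-redundant partition of the parameter space where one would want to conclude.
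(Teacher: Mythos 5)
Your proof is correct and follows essentially the same route as the paper: in each of the three cases you force $g>\lfloor(p^2-p+4)/6\rfloor$ from the genus formula $g=a_1(p-1)/2+a_2p$ of Lemma \ref{psy1} and then conclude via Lemma \ref{resmaxim} together with Lemma \ref{hmcovered}. In case (3) your direct substitution $a_1\geq (p-1)/2-a_2$, yielding $g\geq (p-1)^2/4+a_2(p+1)/2>(p^2-p+4)/6$ for $p\geq 7$, is a slightly cleaner variant of the paper's step of first extracting the lower bound $a_1\geq(p-1)/3$ by a ceiling computation, but the argument is the same in substance.
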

  \begin{proof} If (1) or (2) holds then $g > \lfloor (p^2-p+4)/6\rfloor$, and the 
claim follows from Lemma \ref{resmaxim}. Assume that $a_1+a_2 \geq 
(p-1)/2$ but $a_2 \leq \lfloor (p^2-p+4)/6p \rfloor$. Then $a_1 \geq \lceil 
(p-1)/2 - (p^2-p+4)/6p \rceil= \lceil (2p^2-2p+4)/6p \rceil 
=(p-1)/3$. Hence, $g \geq (p-3)(p-1)/6+p > (p^2-p+4)/6$, and the claim 
follows again from Lemma \ref{resmaxim}.
  \end{proof}

   \section{Proof of Theorem \ref{main}}\label{s3}

Throughot this section, let $\cX$ be an $\F$-maximal curve of genus $g$, where $\F=\F_{p^2}$ 
with $p$ a prime. To prove Theorem \ref{main}, we first analyze the case in which $p \leq 5$. 
   \subsection{Case: $p \leq 5$}\label{s3.1}
Here we do not need the hypothesis $|\aut(\cX)|>84(g-1)$. For $p=2$ and $p=3$ the result is 
trivial by Lemma \ref{resmaxim}. For $p=5$ we use the complete classification, up to isomorphism, 
of $\F_{25}$-maximal curves given in \cite{FG}.
  \begin{lemma} \label{p5} Let $\cX$ be an $\F_{25}$-maximal curve of genus $g=g(\cX).$ 
  Then $\cX$ is Galois-covered by the Hermitian curve $\cH_6.$ 
  \end{lemma}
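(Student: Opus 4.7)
The plan is to combine the genus spectrum of Lemma \ref{resmaxim} with the explicit classification of $\F_{25}$-maximal curves in [FG]. Setting $q=p=5$ in Lemma \ref{resmaxim} gives $g_0=10$, $g_1=\lfloor 16/4\rfloor =4$ and $g_2=\lfloor 24/6\rfloor=4$, so the genus of $\cX$ must lie in $\{0,1,2,3,4,10\}$. The two extreme cases are immediate: if $g=10$ then by Lemma \ref{resmaxim}(2) we have $\cX\cong_\F \cH_6$ and the covering is trivial, while if $g=4=g_1$ then Lemma \ref{resmaxim}(3) identifies $\cX$ with $\cX_{3}=\cH_6/C_2$, so $\cH_6\to\cX$ is a degree-$2$ Galois cover. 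The case $g=0$ is vacuous, since any rational curve over $\F$ is $\F$-covered by $\cH_6$ through the subgroup $\aut(\cH_6)$ itself.

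What remains is to treat the small-genus cases $g\in\{1,2,3\}$. Here I would invoke [FG] to enumerate, up to $\F$-isomorphism, the complete list of $\F_{25}$-maximal curves of each such genus. For every curve $\cY$ on the list the task is to exhibit a subgroup $G\leq\aut(\cH_6)=\PGU(3,5)$ with $\cH_6/G\cong_\F \cY$. Since $|\PGU(3,5)|=378000$ has a rich subgroup lattice, and since the maximal subgroups of $\PGU(3,q)$ and their corresponding quotient curves are well tabulated (see e.g.\ \cite{CKT,GSX,MZq,MX}), one can simply match each entry of the [FG] classification to a known Galois subcover of $\cH_6$.

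As a concrete guide, for $g=1$ the candidate $\cY$ must be a supersingular elliptic curve with $|\cY(\F)|=36$; such a $\cY$ arises as the quotient $\cH_6/H$ for an appropriate $H$ of order $15$ acting on $\cH_6$. For $g=2$ and $g=3$, the [FG] list comprises only a handful of $\F$-isomorphism classes (arising from Kummer or Artin-Schreier type quotients of $\cH_6$), and each can be matched by inspection with a Galois quotient $\cH_6/G$ produced by an abelian or dihedral subgroup of $\PGU(3,5)$ preserving a flag of the Hermitian form.

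The main obstacle is purely bookkeeping: one must reliably pair each of the finitely many isomorphism classes listed in [FG] with an explicit subgroup of $\PGU(3,5)$ and verify, via Riemann-Hurwitz applied to $\cH_6\to\cH_6/G$, that the genera and the number of $\F$-rational points match those of the target curve $\cY$. No new theoretical input is needed beyond the classification in [FG] and the well-known description of the subgroup structure of $\PGU(3,5)$.
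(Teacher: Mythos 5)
Your overall strategy coincides with the paper's: reduce to the classification of $\F_{25}$-maximal curves in \cite{FG} (one isomorphism class per genus in $\{0,1,2,3,4,10\}$), observe that any Galois quotient of $\cH_6$ is automatically $\F_{25}$-maximal, and then for each remaining genus exhibit one explicit quotient, which by the uniqueness in \cite{FG} must be the curve in question. That logical skeleton is sound, and your implicit use of the per-genus uniqueness is exactly what makes ``matching genus and point count'' a legitimate identification criterion (for maximal curves the point count is determined by the genus anyway).

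The gap is that you never actually produce the quotients for $g\in\{1,2,3\}$, and these are the entire content of the lemma; ``purely bookkeeping'' understates what remains. Concretely: (i) for $g=2$ no search is needed at all, since \cite{FG} identifies the unique curve as $\cH_2:\,y^2=x^5+x$, which is one of the curves $\cH_m$ with $m\mid 6$ and hence is Galois-covered by Lemma \ref{hmcovered} --- the same remark disposes of $g=4$ via $\cH_3$; (ii) for $g=1$ the paper does not use a subgroup of order $15$ (your candidate is unverified, and since $5$ divides $15$ such a subgroup is non-tame, so its quotient genus must be computed, not guessed): instead it rewrites $y^2+x^3+1=0$ as the Fermat cubic, realized as the quotient of the Fermat sextic model of $\cH_6$ by the Klein four-group $\{(x,y)\mapsto(\pm x,\pm y)\}$; (iii) for $g=3$ the identification of the quotient with the explicit plane curve $\cC$ of \cite{FG} is not done ``by inspection'' but by invoking \cite[Thm.~5.6]{CKT}, which says there is a unique $\F_{25}$-maximal genus-$3$ curve Galois-covered by $\cH_6$, so that exhibiting any genus-$3$ quotient (the paper takes $\cH_6/H$ with $H$ of order $3$ inside a Singer group of order $21$) suffices. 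Until these three constructions are supplied and their genera verified by Riemann--Hurwitz, the proof is incomplete.
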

   \begin{proof} From \cite[Thm. 11]{FG}, $g(\cX) \in \{0,1,2,3,4,10\}$ where
\begin{enumerate}
\item $g=10$ if and only if $\cX \cong \cH_6: y^6=x^5+x$ over $\mathbb{F}_{25}$;
\item $g=4$ if and only if $\cX \cong \cH_3: y^3=x^5+x$ over $\mathbb{F}_{25}$;
\item $g=3$ if and only if $\cX \cong : \cC: y^6=x^5+2x^4+3x^3+4x^2+3xy^3$ over $\mathbb{F}_{25}$,
\item $g=2$ if and only if $\cX \cong \cH_2: y^2=x^5+x$ over $\mathbb{F}_{25}$,
\item$g(\cX)=1$ if and only if $\cX \cong \cD: y^2+x^3+1=0$ over $\mathbb{F}_{25}$.
\end{enumerate}
   The cases (2) and (4) are Galois covered by $\cH_6$ by Lemma \ref{hmcovered}. The elliptic 
   curve given in (5) is Galois covered by $\cH_6$ as this curve can also be 
   described by the Fermat equation $y^6+x^6+1=0$. Then $\cD=\cH_6/G$, where
   $$
   G=\{\alpha_{a,b}(x,y)=(ax,by) \mid a^{2}=b^{2}=1\} \cong \mathbb{Z}_2 \times \mathbb{Z}_2\, .
   $$ 
Finally, to prove that $\cC$ is also Galois covered by $\cH_6$, 
since there is an unique, up to isomorphism, $\F_{25}$-maximal curve of genus $3$ which is 
Galois covered by $\cH_6$ \cite[Thm. 5.6]{CKT}, it is sufficient to construct a quotient 
curve of $\cH_6$ of genus $3$. This follows considering the plane curve 
  $$
  \cZ:\quad x^5+y+2x^2y^2+xy^5=0\, .
  $$
From \cite[Prop. 2.1]{CKT}, the curve $\cZ$ equals $\cH_6/H$ where 
$H \leq \aut(\cH_6)$ is a subgroup of order $3$ of a Singer group of order $21$.
   \end{proof}
   \subsection{Case $p \geq 7$}\label{s3.2}
Here we assume that $\cX$ is an $\mathbb{F}_{p^2}$-maximal curve of genus $g \geq 2$ such that $|\aut(\cX)|>84(g-1)$ so that one of the cases listed in Theorem \ref{thmHurwitz} is satisfied. Let us start by 
showing that cases (1) and (2) in that result cannot occur.
   \begin{lemma}\label{type1} There is no an $\F_{p^2}$-maximal curve $\cX$ whose full 
automorphism group $\aut(\cX)$ satisfies any of the following property$:$
   \begin{enumerate}
   \item[\rm(1)] It admits exactly one non-tame short orbit $O_1$ and two tame short orbits 
$O_2$ and $O_3;$
   \item[\rm(2)] It admits exactly two non-tame short orbits $O_1$ and $O_2.$ 
   \end{enumerate}
   \end{lemma}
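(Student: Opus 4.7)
The plan is to handle (2) by a short Sylow/conjugacy argument and (1) by a Riemann--Hurwitz computation combined with Lemma~\ref{psy1}. The common starting observation is that a Sylow $p$-subgroup $S$ of $G=\aut(\cX)$ has order exactly $p$ whenever $\cX \not\cong \cH_{p+1}$, by Lemma~\ref{psy1}. Since Theorem~\ref{thmHurwitz}(4) lists $\cH_{p+1}$ as having exactly one non-tame and one tame short orbit, $\cH_{p+1}$ satisfies neither (1) nor (2), so throughout I may assume $|S|=p$.

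For (2), pick any point $P$ in a non-tame orbit. Then $p\mid |G_P|$, and since the $p$-part of $|G|$ equals $p$, the (unique) Sylow $p$-subgroup of $G_P$ is a full Sylow $p$-subgroup of $G$; by Corollary~\ref{actionp2} its only fixed point is $P$. As Sylow $p$-subgroups of $G$ are $G$-conjugate, their fixed points form a single $G$-orbit, so every non-tame point lies in one common orbit---contradicting the existence of two distinct non-tame orbits $O_1,O_2$.

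For (1), fix $P\in O_1$ and write $G_P=S_P\rtimes C$ with $|C|=c$ coprime to $p$, so that $|G_P|=pc$. The same Sylow-conjugacy argument forces $|O_1|\equiv 1\pmod{p}$; write $|O_1|=1+kp$. With $a_1$ as in Lemma~\ref{psy1}, the higher ramification filtration at $P$ in the cover $\cX\to\cX/G=\mathbb{P}^1$ reads $G_P^{(0)}=G_P$, $G_P^{(i)}=S_P$ for $1\le i\le a_1+1$, and trivial thereafter, yielding different exponent $d_P=(pc-1)+(a_1+1)(p-1)$. The two tame orbits $O_2,O_3$ each have stabilizer of order $2$ and contribute $|G|/2$ to Riemann--Hurwitz. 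Using $|G|=pc(1+kp)$ this simplifies to
\[
2g-2=(1+kp)\bigl(a_1(p-1)+p-2\bigr),
\]
which, combined with $g=a_1(p-1)/2+a_2p$ from Lemma~\ref{psy1}, produces the clean Diophantine identity
\[
2a_2-1=k\bigl(a_1(p-1)+p-2\bigr).
\]

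The main obstacle is setting up this identity correctly---tracking the ramification filtration at $P$ and reconciling conventions between Riemann--Hurwitz and Lemma~\ref{psy1}---after which the contradiction falls out by elementary case analysis. Since $p$ is odd, $a_1(p-1)+p-2$ is odd, so $k$ must be odd and $k\ge 1$. The case $k=1$, $a_1=0$ yields $a_2=(p-1)/2$ and $g=p(p-1)/2=g_0$, so Lemma~\ref{resmaxim}(2) forces $\cX\cong\cH_{p+1}$, contradicting the standing assumption. In each remaining sub-case---$k=1$ with $a_1\ge 1$, or $k\ge 3$ with any $a_1\ge 0$---one checks directly from the identity that $g>p(p-1)/2$, violating the Ihara bound in Lemma~\ref{resmaxim}(1). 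This rules out case (1) and completes the proof.
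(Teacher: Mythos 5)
Your proof is correct and follows essentially the same route as the paper: you exclude $\cX\cong\cH_{p+1}$ via the orbit structure in Theorem \ref{thmHurwitz}(4) so that Lemma \ref{psy1} gives $p\mid|\aut(\cX)|$ but $p^2\nmid|\aut(\cX)|$, you dispose of case (2) using the unique-fixed-point property of Sylow $p$-subgroups from Corollary \ref{actionp2} (the paper packages this as a mod-$p$ count of $\cX(\F_{p^2})$, you as a conjugacy argument, but it is the same underlying fact), and you dispose of case (1) by the identical Riemann--Hurwitz computation $2g-2=(1+kp)(a_1(p-1)+p-2)$ combined with \eqref{eq1} and the genus bounds of Lemma \ref{resmaxim}. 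Your Diophantine reformulation $2a_2-1=k\bigl(a_1(p-1)+p-2\bigr)$ versus the paper's split into $n=0$ and $n>0$ are merely two organizations of the same final case analysis, and the sub-cases you defer do check out.
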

   \begin{proof} (1) By Theorem \ref{thmHurwitz}(4) $\cX$ is not isomorphic to $\cH_{p+1}$. 
In particular, from Lemma \ref{psy1} $p \mid |\aut(\cX)|$ but $p^2 \nmid |\aut(\cX)|$. 
Let $H$ be a $p$-Sylow subgroup of $\aut(\cX)$. From Corollary \ref{actionp2} $H$ fixes 
exactly an $\mathbb{F}_{p^2}$-rational point $P\in O_1$ and acts semiregularly in 
$O_1 \setminus \{P\}$. Thus we have that $|O_1|=1+np$ for some $n \geq 0$. From the 
Riemann-Hurwitz formula
  $$
  2g-2=|\aut(\cX)|(2 \cdot 0-2) + (1+np)[(|\aut(\cX)|/(1+np)-1)
  $$
  $$
  +(a_1+1)(p-1)]+\frac{|\aut(\cX)|}{2}[(2-1)+(2-1)]\, ,
  $$
and hence
   \begin{equation}\label{eq2}
   2g-2=(1+np)(a_1p+p-a_1-2)\, .
    \end{equation}
We now assume that $n>0$. From Lemma \ref{resmaxim} we have that 
$2g<p(p-1)$ as $\cX$ is not isomorphic to $\cH_{p+1}$. Thus, by direct checking, 
Equality \eqref{eq2} yields $n=1$ and $a_1=0$. In 
this case $2p(p-1)-2>2g-2=(p+1)(p-2)=2p(p-1)-2$, a contradiction. This yields that 
$n=0$ and $\aut(\cX)$ fixes a point $P \in \cX$. From \eqref{eq2} and \eqref{eq1}
   $$
   a_1(p-1)+2a_2p-2=2g-2=a_1(p-1)+(p-2)\, .
   $$
Since this implies that $2a_2p=p$, we have a contradiction.

(2) From Corollary \ref{actionp2} we know that the fixed points of the Sylow $p$-subgroups 
of $\aut(\cX)$ lie on $\cX(\mathbb{F}_{p^2})$, and hence $O_1$ and $O_2$ are contained in 
$\cX(\mathbb{F}_{p^2})$. Also, as before, the size of each non-tame short orbit of 
$\aut(\cX)$ is congruent to $1$ modulo $p$.

The size of the set $\cX(\mathbb{F}_{p^2}) \setminus O_1$ is congruent to $0 \pmod{p}$. As 
also $O_2 \subsetneq \cX(\mathbb{F}_{p^2})$ has length congruent to $1 \pmod{p}$ and 
$|\aut(\cX)|\equiv 0 \pmod{p}$ we have a contradiction. 
  \end{proof}
Now we deal with cases (3) and (4) of Theorem \ref{thmHurwitz}.
  \begin{lemma}\label{1orbit} Let $\cX$ be an $\F_{p^2}$-maximal curve and assume that 
$\aut(\cX)$ satisfies any of the following properties$:$
   \begin{enumerate} 
   \item[\rm(1)] It has exactly one non-tame short orbit $O_1;$
  \item[\rm(2)] It admits exactly one non-tame short orbit $O_1$ and one tame short orbit 
$O_2.$ 
   \end{enumerate}
Then $\cX$ is Galois-covered by $\cH_{p+1}.$
   \end{lemma}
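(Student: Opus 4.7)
The plan is to invoke Theorem \ref{hm}: since a subgroup of $\aut(\cX)$ of prime order $p$ is cyclic, hence abelian, it suffices to exhibit a subgroup $S\leq\aut(\cX)$ of order $p$ with $\cX/S$ rational. This will force $\cX\cong\cH_m$ for some $m\mid p+1$, and Lemma \ref{hmcovered} then yields the desired Galois cover by $\cH_{p+1}$. Failing a direct proof that $\cX/S$ is rational in some sub-case, the alternative route is to arrange that $g>\lfloor(p^2-p+4)/6\rfloor$, so that Lemma \ref{resmaxim} pins $\cX$ to one of $\cH_{p+1}$ or $\cX_{(p+1)/2}$, both of which are Galois-covered by $\cH_{p+1}$; this is precisely the payoff of Corollary \ref{bounda1a2}.

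First, $p\mid|\aut(\cX)|$, for otherwise the classical Hurwitz bound $|\aut(\cX)|\leq 84(g-1)$ would contradict the standing hypothesis. By Lemma \ref{psy1}, either $\cX\cong\cH_{p+1}$ and we are done, or every Sylow $p$-subgroup $S$ of $G:=\aut(\cX)$ has order exactly $p$; fix such an $S$, with unique fixed point $P\in O_1$ (Corollary \ref{actionp2}). Write $g=\tfrac{a_1(p-1)}{2}+a_2 p$ with $a_2=g(\cX/S)$ as in Lemma \ref{psy1}; the target is either $a_2=0$ or one of the conditions of Corollary \ref{bounda1a2}. Set $N=|G|$ and $|G_P|=pe$ with $\gcd(e,p)=1$, so that the ramification filtration at $P$ yields different exponent $d_P=(pe-1)+(a_1+1)(p-1)$. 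Applying Riemann-Hurwitz to $\cX\to\cX/G\cong\mathbb{P}^1$, whose rationality comes from Theorem \ref{thmHurwitz}, gives in case $(1)$
\[
2g-2=-2N+\frac{N}{pe}\bigl[(pe-1)+(a_1+1)(p-1)\bigr],
\]
equivalently $N=\dfrac{pe(2g-2)}{a_1(p-1)-p(e-1)-2}$, which combined with $N>84(g-1)$ forces
\[
pe>42\bigl[a_1(p-1)-p(e-1)-2\bigr].
\]
In case $(2)$, a tame orbit $O_2$ with stabilizer of order $m$ coprime to $p$ contributes an extra summand $N(m-1)/m$, yielding an analogous, two-parameter inequality involving $(e,m)$.

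To close the argument one performs a case analysis on $(e,m)$, combining the above inequality with the intrinsic constraints $|O_1|=N/(pe)\leq|\cX(\F)|=p^2+1+2gp$ and $|O_1|\equiv 1\pmod p$ (the latter because $S$ acts semi-regularly on $O_1\setminus\{P\}$). In each sub-case, one shows that either $a_2=0$ (and Theorem \ref{hm} finishes the proof) or one of the three conditions on $a_1,a_2$ in Corollary \ref{bounda1a2} is met (and Lemma \ref{resmaxim} finishes the proof). The main obstacle is the regime where $e$ (and, in case $(2)$, $m$) is large: then $|O_1|$ is small and the Riemann-Hurwitz inequality alone does not drive $(a_1,a_2)$ into the range of Corollary \ref{bounda1a2}, so one must exploit the integrality and congruence constraints on $|O_1|$, together with the explicit shape $g=\tfrac{a_1(p-1)}{2}+a_2 p$, to eliminate these configurations. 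The interaction between the two unknowns $(e,m)$ in case $(2)$ is the technical heart of the argument.
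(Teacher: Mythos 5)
Your proposal is a strategy outline that matches the paper's high-level plan exactly (the two exit routes are Theorem \ref{hm} when $a_2=0$ and Corollary \ref{bounda1a2} otherwise, and the tools are Riemann--Hurwitz plus congruence constraints coming from the action on $\cX(\F_{p^2})$), but the proof itself is not there: everything after "To close the argument one performs a case analysis on $(e,m)$" is a description of work you have not done, and you concede yourself that in the hard regime the Riemann--Hurwitz inequality derived from $N>84(g-1)$ "alone does not drive $(a_1,a_2)$ into the range of Corollary \ref{bounda1a2}." That concession is accurate, and it is precisely why the unexecuted case analysis is the entire content of the lemma. Moreover, the quantitative inequality $pe>42\bigl[a_1(p-1)-p(e-1)-2\bigr]$ that you extract from the Hurwitz bound is not what the paper uses at all; the paper uses the bound only through Theorem \ref{thmHurwitz} (to know $\cX/G$ is rational and to fix the orbit structure), and then works purely with divisibility and the genus spectrum of Lemma \ref{resmaxim}.

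Concretely, the ingredients you would need and do not supply are these. In case (1), since $O_1$ is the \emph{unique} short orbit, $|O_1|$ divides $2g-2$ (this is \cite[Lemma 11.111]{HKT}, and it is special to the one-orbit situation); combined with $|O_1|\mid p^2+1+2gp$ (because $\cX(\F_{p^2})$ is a union of $O_1$ and long orbits) one gets $|O_1|\mid (p+1)^2$, which together with $|O_1|\equiv 1\pmod p$ leaves only $|O_1|\in\{1,\,p+1,\,(p+1)^2\}$ — three explicit sub-cases, each killed by Riemann--Hurwitz or by $2g<p(p-1)$. Your write-up never reduces to finitely many sub-cases. In case (2) the decisive branching is not on the pair $(e,m)$ of stabilizer orders but on how $\cX(\F_{p^2})$ decomposes: whether it equals $O_1\cup O_2$, equals $O_1$ (so $O_2$ lies off the rational points, where one needs the bound $h\le 4a_2+2$ on the tame stabilizer order from \cite[Thm. 11.60]{HKT} and, for $p=7$, an appeal to \cite[Thm. 5]{FGP}), or contains long orbits, with a further split on $n=(|O_1|-1)/p$ being $0$, $1$, or $>1$ and delicate divisibility computations in the last sub-case. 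None of this is recoverable from the inequality you state, so the argument as written does not establish the lemma.
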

   \begin{proof} (1) We can assume that for every Sylow $p$-subgroup $H$ of $\aut(\cX)$, 
the quotient curve $\cX/H$ is not rational; otherwise, the claim follows from Theorem 
\ref{hm}. In particular, this implies that $a_2>0$ in \eqref{eq1}. As before, write 
$|O_1|=1+np$ for $n \geq 0$. From \cite[Lemma 11.111]{HKT} we have that $|O_1|$ divides 
$2g-2$. Since the unique short orbit of $\aut(\cX)$ must be contained in 
$\cX(\mathbb{F}_{p^2})$ then either $\cX(\mathbb{F}_{p^2})=O_1$ or 
$\cX(\mathbb{F}_{p^2})=O_1 \cup (\bigcup_{i=1}^{t} \tilde O_i)$ where $| \tilde 
O_i|=|\aut(\cX)|$ for $t \geq 1$. Clearly the first case is not possible as 
$p^2+1+2gp>2g-2$. Assume that $\cX(\mathbb{F}_{p^2})=O_1 \cup (\bigcup_{i=1}^{t} \tilde 
O_i)$ where $| \tilde O_i|=|\aut(\cX)|$ for $t \geq 1$. This case can occur only if $|O_1|$ 
is a divisor of $p^2+1+2gp$ and $2g-2$, that is, only if $|O_1|$ is a divisor of $(p+1)^2$ 
which is congruent to $1$ modulo $p$. This implies three possible cases: 
either $|O_1|=(p+1)^2$\, (a) or $|O_1|=1$\, (b) or $|O_1|=p+1$\, (c).
  
{\bf (a) holds.} Here from Lemma \ref{resmaxim}, $p(p-1)-2>2g-2 \geq (p+1)^2$, 
a contradiction. 

{\bf (b) holds.} Here from the Riemann-Hurwitz formula
   $$
   2g-2=|\aut(\cX)|(-2)+[|\aut(\cX)|-1+(a_1+1)(p-1)]\, ,
   $$
and hence from \eqref{eq1} $2a_2p-p=-|\aut(\cX)|$, a contradiction to $a_2>0$.

{\bf (c) holds.} Since $\aut(\cX)$ has no other short 
orbits, we have that $(p+1) \mid p^2+1+2gp-(p+1)$ and hence $(p+1) \mid (a_1+1)(p-1)+2a_2p$. 
By direct computation this implies that $(p+1) \mid 2a_1+2a_2+2$. In particular $a_1+a_2 
\geq \frac{p-1}{2}$, while $1 \leq a_2 \leq \frac{p^2-p+4}{6p}$. The claim now follows from 
Lemma \ref{bounda1a2}.

(2) As before, we can assume that for every 
Sylow $p$-subgroup $H$ of $\aut(\cX)$ the quotient curve 
$\cX/H$ is not rational, otherwise the claim follows from Theorem \ref{hm}. 
In particular, this implies that $a_2>0$ in \eqref{eq1}. Assume that $|O_1|=1+np$ for 
$n \geq 0$.

\textbf{Case 1:} $\cX(\F_{p^2})=O_1 \cup O_2$. From the Riemann-Hurwitz formula
   $$
   2g-2=|\aut(\cX)|(-2)+(1+np)\Big[(a_1+1)(p-1)+\frac{|\aut(\cX)|}{1+np}-1\Big]
   $$
   $$
   +(p^2+1+2gp-1-np)\Big(\frac{|\aut(\cX)|}{p^2+1+2gp-1-np}-1\Big)\quad\text{and hence}\, ,
   $$
  $$
  2g-2=(1+np)[(a_1+1)(p-1)-1]-(p^2+(2g-n)p)\, .
  $$
Using \eqref{eq1} this reduces to $2a_2(p+1)=(a_1+1)(n-1)(p-1)$. 
Since $a_2>0$ and $(p-1,p+1)=2$, we have that $a_2 \geq (p-1)/4$ and 
hence $g \geq p(p-1)/2$. The claim now follows.

\textbf{Case 2:} $\cX(\F_{p^2})=O_1$. For $P \in O_2$ let $|\aut(\cX)_P|=hp$, 
where $(h,p)=1$. Then 
$h \leq 4a_2+2$, as $h$ is the order of a cyclic group in $\cX / H$, where $H$ is a 
$p$-group of order $p$; see \cite[Thm. 11.60]{HKT}. Let $Q \in O_2$. From the 
Riemann-Hurwitz formula
   $$
   2g-2=-2ph|O_1|+|O_1|((hp-1)+(a_1+1)(p-1))+|O_2|(|\aut(\cX)|/|O_2|-1)\, ;
   $$
or equivalently $|\aut(\cX)|=2(g-1)\frac{|\aut(\cX)_P| \cdot |\aut(\cX)_Q|}{N}$, 
where $N=|\aut(\cX)_Q|(-1+(a_1+1)(p-1))-|\aut(\cX)_P| \geq 1$; see 
\cite[(11.67) and (11.68)]{HKT}. This yields $-1/|\aut(\cX)_Q| 
\geq -(-1+(a_1+1)(p-1))/(hp+1)$ and hence
   $$
   \frac{2g-2}{ph|O_1|}=-\frac{1}{|\aut(\cX)_Q|}+
   \frac{(a_1+1)(p-1)-1}{hp} \geq \frac{(a_1+1)(p-1)-1}{hp(hp+1)}\, .
   $$
Thus,
   $$
   \frac{1}{hp^2} \geq \frac{2g-2}{2hgp^2} \geq \frac{2g-2}{ph|O_1|} \geq 
   \frac{(a_1+1)(p-1)-1}{hp(hp+1)}\, .
   $$
From the last inequalities, using $h \leq 4a_2+2$, we get that $(4a_2+2)p+1 \geq hp+1 \geq 
a_1p^2+p^2-a_1p-2p$ and hence
  $$
  a_2 \geq \frac{a_1p^2+p^2-a_1p-4p-1}{4p} \geq \frac{p^2-4p-1}{4p}\, ,\quad
  \text{while $g \geq  \frac{p(p^2-4p-1)}{4p}>g_3$}\, . 
  $$
If $p \geq 11$ this gives a contradiction from \cite{FT}. If $p=7$, then $g$ is not bigger 
than $g_3$ if and only if $a_1=0$. But since we are assuming that $a_2 \geq 1$, then $g \geq 
7=g_3$. The claim now follows from \cite[Thm. 5]{FGP}.

\textbf{Case 3:} $\cX(\F_{p^2})$ contains $O_1$ and at least a 
long orbit of $\aut(\cX)$. A case-by-case analysis is considered according to 
$n=0$, $n=1$ or $n>1$.
   
{\bf Assume that $n=0$.} In this case $O_1=\{P\}$ and 
$\aut(\cX)_P=\aut(\cX)$. 
From the Riemann-Hurwitz formula
   $$
   2g-2=-2|\aut(\cX)|+(a_1+1)(p-1)+|\aut(\cX)|-1+|O_2|(|\aut(\cX)|/|O_2|-1)\, .
   $$
Then $2a_2p-p=-|O_2|$, a contradiction since $a_2>0$.

{\bf Assume that $n=1$.} From the Riemann-Hurwitz formula
  $$
  2g-2=-2|\aut(\cX)| +(p+1)(|\aut(\cX)|/(p+1)-1+(a_1+1)(p-1))+|O_2|(|\aut(\cX)|/|O_2|-1)\, ,
   $$
and hence from \eqref{eq1}, $|O_2|=p(a_1+1)(p-1)-2a_2p$. 
The length $|O_2|$ must divide $p^2+2gp-p$. 
If $O_2$ is not contained in $\F_{p^2}$ then also $(p+1) \mid p^2+1+2gp-(p+1)$ and 
hence $(p+1) \mid (a_1+1)(p-1)+2a_2p$ and it divides $|\aut(\cX)|$. By direct 
computation this implies that $(p+1) \mid 2a_1+2a_2+2$. In particular $a_1+a_2 \geq 
\frac{p-1}{2}$ and so the claim follows from Lemma \ref{bounda1a2}. 
If $O_2$ is contained in $\cX(\F_{p^2})$, $p^2+1+2gp-(p+1)-|O_2|$ must be 
positive and $|O_2|$ divides $p^2+1+2gp-(p+1)-|O_2|=2a_2p(p+1)$. Also,
$|O_2/p|=(a_1+1)(p-1)-2a_2$ divides $p+2g-1=(a_1+1)(p-1)+2a_2p-1=|O_2|/p+2a_2+2a_2p-1$. This 
implies that $|O_2|/p$ divides both $2a_2(p+1)-1$ and $2a_2p(p+1)$ and hence $|O_2|=p$. In 
particular $(a_1+1)(p-1)-2a_2=1$ and $a_2 \geq (p-2)/2$. Since this implies that $g \geq 
p(p-2)/2$ the claim follows.

{\bf Assume that $n>1$.} From the Riemann-Hurwitz formula 
  $$
  2g-2=-2|\aut(\cX)|
  $$ 
  $$+(1+np)\Big(\frac{|\aut(\cX)|}{1+np}-1+(a_1+1)(p-1)\Big)+
  |O_2|\Big( \frac{|\aut(\cX)|}{|O_2|}-1\Big)\quad\text{and hence}\, ,
  $$ 
  $$
  |O_2|=p[-2a_2-n+1+n(a_1+1)(p-1)]\, .
  $$ 
Since $|O_2|$ is a divisor of $p(p+2g-n)$ we have that $|O_2| \leq (p^2-np)/2+gp$ and 
    $$
    \frac{p^2-p+4}{6}\geq g_3 \geq g \geq \frac{(1+np)(-1+(a_1+1)(p-1))}{p+2} - 
    \frac{p^2-np}{2(p+2)}+\frac{2}{p+2}\, ,
    $$ 
    which implies 
\begin{equation} \label{n6}
n \leq \Big\lfloor \frac{p^3+4p^2-4p+8-6a_1(p-1)}{6p^2-9p+6a_1p^2-6a_1p} \Big\rfloor\, .
\end{equation}

In particular we get that $a_1<p/6$ and $n<(p+6)/6$. 

Assume that $O_2$ is not contained in $\cX(\mathbb{F}_{p^2})$. 

As $1+np$ divides 
$p^2+1+2gp-1-np=p(p+2g-n)$, we have that $p+2g-n=p(1+a_1+2a_2)-a_1-n$. Write 
$p(1+a_1+2a_2)-a_1-n=\alpha(1+np)$. Then $\alpha\equiv -a_1-n\pmod{p}$ and 
since $a_1<p/6$, $n<(p+6)/6$ and $\alpha \leq p$, we get $\alpha=p-a_1-n$. Thus, 
$p(1+a_1+2a_2)-a_1-n=(p-a_1-n)(1+np)$, and hence
  $$
  \frac{p}{2}=\frac{p}{6}+\frac{2p}{6} \geq a_1+2a_2=n(p-a_1-n) \geq \frac{2p}{2}=p\, ,
  $$
a contradiction.

Thus, we can assume that $O_1$ and $O_2$ are both contained in $\cX(\F_{p^2})$.  

Since $1+np$ divides $p^2-np+2gp-|O_2|$, we have in particular that $1+np$ divides 
$p+(1-n)a_1(p-1)+2a_2(p+1)+n$ and hence 
  \begin{equation} \label{eqn}
n \leq 1+\frac{2a_2(p+1)}{(p-1)(a_1+1)}\, .
  \end{equation}
Write $k(1+np)=p+(1-n)a_1(p-1)+2a_2(p+1)+n$. Thus $k\equiv (n-1)a_1+2a_2+n\pmod{p}$ 
and $k \leq p$ as $p+(1-n)a_1(p-1)+2a_2(p+1)+n < p+\frac{p}{3}(p+1)+\frac{p+6}{6}=
(2p^2+9p+6)/6<p(1+2p)\leq p(1+np)$. We observe that from $a_2 \leq p/6$ and \eqref{eqn},
  $$
(n-1)a_1+2a_2+n \leq \frac{2a_2(p+1)a_1}{(p-1)(a_1+1)}+2a_2+1+\frac{2a_2(p+1)}{(p-1)(a_1+1)}
  $$
  $$ 
  \leq \frac{2a_2(p+1)}{(p-1)}+2a_2+1 \leq \frac{2a_2(2p)}{p-1}+1 < p+1\, ,
  $$
and hence $k=(n-1)a_1+2a_2+n$ with
  $$
  ((n-1)a_1+2a_2+n)(1+np)=p+(1-n)a_1(p-1)+2a_2(p+1)+n\, .
  $$
Thus $(n-1)((n+1)(a_1+1)+2a_2)=0$, which is impossible for $(n+1)(a_1+1)+2a_2 \geq 5$ 
and $n \ne 1$.
  \end{proof}
Thereferore the proof of Theorem \ref{main} follows from Theorem \ref{hmcovered} and Lemmas 
\ref{type1}, \ref{1orbit}.

  \section{On the hypothesis concerning the classical Hurwitz's bound}\label{s4}

Let $p$ be a prime and $\F$ be the finite field of order $p^2$. In view of Theorem 
\ref{main}, a natural question arises: Is any $\F$-maximal curve $\cX$ of genus $g=g(\cX)$ 
Galois-covered by $\cH_{p+1}$ also when the classical Hurwitz's bound 
\begin{equation}\label{HurwitzBound}
|\aut(\cX)| \leq 84(g-1)
\end{equation}
 hold true? As a matter of fact, it is not difficult to find examples of 
such curves which are not $\F$-isomorphic to $\cH_m: y^m=x^p+x$ with $m\mid (p+1)$.
  \begin{example}\label{es1} The curve $\cX$ given by the affine model over $\F=\F_{49}$
  $$
  y^8=x^4-x^2\, ,
  $$
  is $\F$-maximal with $g=g(\cX)=5$ \cite[Ex. 4.5.]{ATT}. For $m \mid 8$, we have that 
$g(\cH_m)=6(m-1)/2$. Thus, $\cX\ncong \cH_m$ for any $m \mid 8$. By direct 
checking using MAGMA (computational algebra system), $|\aut(\cX)|=192<336=84(g(\cX)-1)$. 
However we observe that $\cX$ is Galois-covered by $\cH_8$ from \cite[Ex. 6.4, Case 1]{GSX}.
  \end{example}
Next we present a $\F_{71}$-maximal curve of genus $7$ such that \eqref{HurwitzBound} holds 
which is not $\F$-Galois covered by $\cH_{72}$. The starting point is the 
plane equation over the complex numbers 
  $$
1+7xy+21x^2y^2+35x^3y^3+28x^4y^4+2x^7+2y^7=0
  $$
which defines the unique compact Riemann surface $\cF$ of genus $7$ such that 
$\aut(\cF)\cong PSL(2,8)$ (so that equality in \eqref{HurwitzBound} holds true); Fricke 
\cite{Fricke}, Macbeath \cite{MacBeath}, Edge \cite{Edge}, Hidalgo \cite{Hidalgo}. The curve 
$\cF$ is refered nowadays as the Fricke-Macbeath curve. This curve was considered over 
finite fields by Top and Verschoor \cite{TV}. In particular, we have the following.
   \begin{proposition} Let $p$ be a prime$,$ $p \equiv \pm 1 \pmod {14},$ and $\F=\F_{p^2}.$ 
   Then the Fricke-Macbeath curve $\cF$ above is $\F$-maximal if 
and only if the elliptic curve $\mathcal{E}:\, y^2-(x^3+x^2-114x-127)=0$ is 
$\F$-maximal$.$ 
   \end{proposition}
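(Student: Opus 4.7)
The plan is to derive the equivalence from a decomposition of the Jacobian $J(\cF)$. Recall that a curve $\cX$ of genus $g$ over $\F=\F_{p^2}$ is $\F$-maximal if and only if the characteristic polynomial of the Frobenius endomorphism on $J(\cX)$ equals $(T+p)^{2g}$; equivalently, iff $J(\cX)\sim_{\F}\mathcal{E}_0^{\,g}$ for a supersingular elliptic curve $\mathcal{E}_0$ whose Frobenius eigenvalues both equal $-p$. In particular, maximality is an invariant of the $\F$-isogeny class of the Jacobian, so it suffices to show that $J(\cF)$ is $\F$-isogenous to a seventh power of the given elliptic curve $\mathcal{E}$.

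The first step is to verify that the hypothesis $p\equiv\pm 1\pmod{14}$ ensures that the full automorphism group $G\cong\mathrm{PSL}(2,8)$ of $\cF$ is defined over $\F$; this is a standard check on the field of definition of the roots of unity needed to realize the elements of $G$, whose orders include $2$ and $7$. Once this is established, the character-theoretic fact that $H^0(\cF,\Omega^1_{\cF})$ is an irreducible seven-dimensional representation of $G$, combined with the Kani--Rosen isotypical decomposition of Jacobians, yields a $G$-equivariant $\F$-isogeny
\[
J(\cF)\sim_{\F}\mathcal{E}_0^{\,7}
\]
for some elliptic curve $\mathcal{E}_0$ defined over $\F$. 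Concretely, $\mathcal{E}_0$ can be realized as a quotient $\cF/H$ where $H\le G$ is an index-$7$ subgroup chosen so that $g(\cF/H)=1$.

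The main obstacle is identifying this isogeny factor $\mathcal{E}_0$ with the particular Weierstrass model $y^2=x^3+x^2-114x-127$. For this I would appeal to the explicit analysis of Top and Verschoor \cite{TV}: starting from the given plane equation of $\cF$ they compute the $j$-invariant and an $\F$-rational Weierstrass model of the isogeny factor directly, so that $\mathcal{E}_0$ is shown to be $\F$-isomorphic to the stated $\mathcal{E}$; alternatively, one would produce a birational map from a concrete genus-one quotient of $\cF$ onto the claimed curve and verify that the resulting equation has coefficients in the prime field. Once the isogeny $J(\cF)\sim_{\F}\mathcal{E}^7$ is pinned down, the $L$-polynomial factors as $L_{\cF}(T)=L_{\mathcal{E}}(T)^7$, and therefore $\cF$ is $\F$-maximal iff $L_{\mathcal{E}}(T)=(T+p)^2$, iff $\mathcal{E}$ is $\F$-maximal, completing the proof.
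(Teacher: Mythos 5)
Your proposal is correct in substance, but it reaches the statement by a different packaging of the same underlying fact. The paper's proof is a one-liner: it quotes the point-count identity $|\cF(\mathbb{F}_{p^2})|=7|\mathcal{E}(\mathbb{F}_{p^2})|-6p^2-6$ from Top and Verschoor (valid for $p\equiv\pm1\pmod 7$) and observes that $7|\mathcal{E}|-6p^2-6=p^2+1+14p$ holds exactly when $|\mathcal{E}|=(p+1)^2$. Your argument reconstructs the mechanism behind that identity, namely the $\F$-isogeny $J(\cF)\sim_{\F}\mathcal{E}^{7}$, and then uses the isogeny-invariance of maximality; the trace relation produced by that isogeny is precisely the formula the paper cites, so the two routes carry identical mathematical content. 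What your route buys is a conceptual explanation of \emph{why} the equivalence holds; what it does not buy is independence from the reference, since the hard step --- identifying the elliptic isogeny factor with the specific model $y^2=x^3+x^2-114x-127$ over $\F$ --- is still taken from Top and Verschoor, which is exactly what the cited point-count formula encapsulates.

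Two points in your sketch need repair before the decomposition step is actually a proof. First, $PSL(2,8)$ has no subgroup of index $7$ (its smallest faithful permutation degree is $9$, the largest proper subgroup having order $56$), so the elliptic factor cannot be realized as $\cF/H$ with $[G:H]=7$; the correct realization is $\cF/C_7$ for a Sylow $7$-subgroup $C_7$ (index $72$), which has genus $1$ by Riemann--Hurwitz because each of the $36$ subgroups of order $7$ fixes exactly two points of the short orbit of length $72$, giving $12=7(2g'-2)+12$. Second, irreducibility of the $7$-dimensional canonical representation does not by itself yield $J(\cF)\sim\mathcal{E}_0^{7}$ via Kani--Rosen: among the four $7$-dimensional irreducible characters of $PSL(2,8)$, three form a Galois orbit with character field of degree $3$ over $\mathbb{Q}$ and hence a rational irreducible module of dimension $21>14=\dim H^1$, so one must first note that the canonical representation is forced to be the unique rational $7$-dimensional irreducible; only then does the group-algebra decomposition produce the seventh power of an elliptic curve rather than a higher-dimensional simple factor.
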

   \begin{proof} From \cite[Thm. 2.6]{TV}, since $p \equiv \pm 1 \pmod 7$, the number of 
$\mathbb{F}_{p^2}$-rational points of $\mathcal{F}$ satisfies 
$|\mathcal{F}(\mathbb{F}_{p^2})|=7|\mathcal{E}(\mathbb{F}_{p^2})|-6p^2-6$. Thus 
$|\mathcal{F}(\mathbb{F}_{p^2})|=p^2+1+14p$ if and only if 
$|\mathcal{E}(\mathbb{F}_{p^2})|=p^2+2p+1$ that is if and only if $\mathcal{E}$ is 
$\mathbb{F}_{p^2}$-maximal. 
   \end{proof}
  \begin{remark} By direct checking with MAGMA, the curve $\mathcal{F}$ is 
  $\mathbb{F}_{p^2}$-maximal 
for $p \in \{71,251,503,2591\}$ and $\aut(\cX) \cong PSL(2,8)$. Clearly, since the 
$\mathbb{F}_{p^2}$-maximality of $\mathcal{F}$ is equivalent to an elliptic curve to be 
supersingular there exist infinitely many values of $p$ for $\mathcal{F}$ to be 
$\mathbb{F}_{p^2}$-maximal, see \cite{Elkies}. 
  \end{remark}
The main reslt of this section is the following.
  \begin{theorem}\label{nonga} The $\cF$ Fricke-Macbeath curve above is 
  $\F_{71}^2$-maximal of genus $7$ with $\aut(\cF)\cong PSL(2,8)$ but it is not a 
Galois subcover of $\cH_{72}$.
  \end{theorem}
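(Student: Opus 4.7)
The maximality of $\cF$ over $\F_{71^2}$, its genus $7$, and the isomorphism $\aut(\cF)\cong PSL(2,8)$ are already contained in the preceding Proposition and Remark together with the classical description of the Fricke--Macbeath curve; only the non-Galois-covering assertion requires a proof. The plan is to argue by contradiction: suppose that there exists $G\leq \aut(\cH_{72})\cong \PGU(3,71)$ with $\cH_{72}/G\cong\cF$ over $\F_{71^2}$.

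First, since $g(\cH_{72})=71\cdot 70/2=2485$ and $g(\cF)=7$, the Riemann--Hurwitz formula applied to the Galois cover $\cH_{72}\to\cF$ yields $4968=12|G|+R$ with ramification term $R\geq 0$, hence $|G|\leq 414$. By Lemma~\ref{1actionp2} every automorphism of $\cF$ is defined over $\F_{71^2}$ and therefore lifts to $N:=N_{\PGU(3,71)}(G)$, producing an exact sequence $1\to G\to N\to PSL(2,8)\to 1$. In particular $|N|\geq 504\,|G|$ and the simple group $PSL(2,8)$ must appear as a section of $\PGU(3,71)$.

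The heart of the argument is to rule out every candidate pair $(G,N)$ using the Mitchell--Hartley classification of the subgroups of $\PGU(3,p)$ (see \cite[Ch.~11]{HKT}). Since $p=71$ is prime no subfield subgroups occur, so up to conjugacy $G$ must fall into one of the following families: cyclic or metacyclic subgroups of a maximal torus (of order dividing $p-1$, $p+1$, or $p^2-p+1$); subgroups of a Sylow $71$-subgroup, or semidirect extensions of such by a torus; subgroups of the stabilizer of a self-polar triangle; and a short list of small almost simple subgroups (such as $A_5$, $PSL(2,7)$, $A_6$) which exist only under precise arithmetic conditions on $p=71$. For each family one reads off the structure of $N$ from the classification and checks that $N/G$ cannot contain $PSL(2,8)$.

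The main obstacle is exactly this case analysis. The hardest cases are those in which $G$ is solvable and normalized by a large solvable subgroup of $\PGU(3,71)$ (the normalizer of a maximal torus, a Singer cycle, or the Borel containing the Sylow $71$-subgroup); here one argues that $N$ is itself solvable, being contained in a solvable maximal subgroup, so $N/G$ is solvable, contradicting the non-abelian simplicity of $PSL(2,8)$. An effective arithmetic discriminator in the remaining cases is the presence of an element of order $9$ in $PSL(2,8)$: its preimage in $N$ must have order divisible by $9$, and in $\PGU(3,71)$ such elements lie in very restricted tori (essentially the cyclic subgroup of order $72$ inside a non-split torus), whose normalizer chain yields the final contradiction.
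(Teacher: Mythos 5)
Your reduction to a group-theoretic statement hinges on the claim that, if $\cF\cong\cH_{72}/G$, then every automorphism of $\cF$ lifts to $N:=N_{\PGU(3,71)}(G)$, yielding an exact sequence $1\to G\to N\to PSL(2,8)\to 1$. This is a genuine gap: for a Galois cover $\cX\to\cX/G$ one only has the canonical \emph{injection} $N_{\aut(\cX)}(G)/G\hookrightarrow\aut(\cX/G)$; surjectivity (i.e.\ that automorphisms downstairs lift) is not automatic and is not justified here. The standard sufficient condition (Accola's strongly branched criterion, roughly $g(\cX)>n^2g(\cX/G)+(n-1)^2$ for $n=|G|$) fails badly in this situation, since $g(\cH_{72})=2485$ while $|G|\geq 60$ forces $n^2\cdot 7+(n-1)^2>2485$. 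Without the lifting, your plan of ``checking that $N/G$ cannot contain $PSL(2,8)$'' produces no contradiction at all: $N/G$ could simply be a proper (even trivial) subgroup of $\aut(\cF)$. Note also that your Riemann--Hurwitz estimate gives only $|G|\leq 414$; the lower bound $|G|\geq 60$, coming from comparing $|\cH_{72}(\F_{71^2})|$ with $|\cF(\F_{71^2})|$, is also needed to make the case list finite in a useful way.

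The paper argues in the opposite, correct direction and this is where essentially all of the work lies. It fixes each admissible order $|G|$ (the $47$ divisors of $|\PGU(3,71)|$ in $[60,414]$) and computes $\deg(\Delta)=4968-12|G|$; using the classification of elements of $\PGU(3,q)$ by their contribution $i(\sigma)$ to the different, it shows that for most orders and isomorphism types of $G$ no quotient of genus $7$ can exist at all. Only for the handful of $(|G|,G)$ that do produce a genus-$7$ quotient does a normalizer argument enter, and there it is used in the legitimate direction: one exhibits an explicit subgroup of $N_{\PGU(3,71)}(G)/G$ (a cyclic group of order $72$, $28$, $24$, $21$, etc.) which therefore acts faithfully on $\cH_{72}/G$ but does not embed in $PSL(2,8)$. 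If you want to keep a short argument, you would need to either prove the lifting statement for these specific covers (which seems out of reach) or replace it by the case-by-case analysis of which $G$ actually give genus $7$ and what $N_{\PGU(3,71)}(G)/G$ contributes to their automorphism groups.
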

  \begin{proof} We only need to show the assertion on the covering. 
  The proof is long and very technical. Assume by contradiction that 
  $\mathcal{F} \cong \cH_{72}/G$ for some $G \leq 
PGU(3,71)$. The order of $PGU(3,71)$ is equal to $2^7 \cdot 3^5 \cdot 5 \cdot 7 \cdot 71 \cdot 
1657$. From the Riemann-Hurwitz formula, 
   $$ 
\frac{|\cH_{72}(\mathbb{F}_{71^2})|}{|\mathcal{F}(\mathbb{F}_{71^2})|} \leq |G| \leq 
\frac{2g(\cH_{72})-2}{2g(\mathcal{F})-2}\, ,
   $$ 
   which yields $60 \leq |G| \leq 414$, as 
$2g(\cH_{72})-2=4968$ and $2g(\mathcal{F})-2=12$. Since $|G|$ divides $|PGU(3,71)|$, 
we have to deal with 47 cases, namely
   \begin{equation} \label{ordini}
\begin{aligned}
|G| \in \{60,63,64,70,71,72,80,81,84,90,96,105,108,112,120,126, \\
128,135,140,142,144,160,162,168,180,189,192,210,213,216,224,\\ 240,243,252,270,280,
284,288,315,320,324,336,355,360,378,384,405\}\, . 
\end{aligned}
\end{equation}
The different divisor $\Delta$ has degree
   \begin{equation}\label{delta}
\deg(\Delta)=\sum_{\sigma \in G \setminus \{id\}} i(\sigma)=(2g(\cH_{72})-2)-
|G|(2g(\mathcal{F})-2)=4968-12|G|\, .
  \end{equation}
For the computation of $i(\sigma)$ we refer to the notation used in \cite[Lemma 2.2]{MZ} and the 
complete classification given in \cite[Thm. 2.7]{MZ}.

\textbf{Case 1: $71$ divides $|G|$}

{\bf $|G|=71$.} From \eqref{delta}, $\deg(\Delta)=4116$ but from \cite[Thm. 2.7]{MZ} either $\deg(\Delta)=70 \cdot 2$ or $\deg(\Delta)=70 \cdot (73)$, a contradiction.

{\bf $|G|=142$.} In this case either $G$ is dihedral or cyclic. From \eqref{delta}, 
$\deg(\Delta)=3264$. Assume that $G$ is dihedral. From \cite[Thm. 2.7]{MZ} either 
$\deg(\Delta)=70 \cdot 2 +71 \cdot 72 $ or $70 \cdot 73 +71$, a contradiction. If $G$ is 
cyclic then either $\deg(\Delta)=70 \cdot 2 +72+1 \cdot 70$ or $\deg(\Delta)=70 \cdot 73 + 
72 + 1 \cdot 70$, which are impossible.

{\bf $|G|=213$.} From \eqref{delta}, $\deg(\Delta)=2412$ and $G$ is cyclic. From \cite[Lemma 
2.2]{MZ}, if $\sigma \in G$ is tame then $\sigma$ is of type (A) and hence 
$i(\sigma)=p+1=72$. From \cite[Thm. 2.7]{MZ} either $\deg(\Delta)=70 \cdot 2 + 2 \cdot 72 + 
140 \cdot 1$ or $\deg(\Delta)=70 \cdot 73 + 2 \cdot 72 + 140 \cdot 1$, a contradiction.

{\bf $|G|=284$.} There are $4$ groups of order $284$ up to isomorphism. We will refer to 
such groups keeping the standard GAP notation as $G \cong SmallGroup(284,i)$ for 
$i=1,2,3,4$. From \cite[Thm. 2.7]{MZ}, if $G \cong SmallGroup(284,1)$ then $\deg(\Delta)=70 
\cdot 1 + 142 \cdot \alpha + 70 \cdot \beta + 72$, where $\alpha \in \{0,72\}$ and $\beta 
\in \{2,73\}$.  If $G \cong SmallGroup(284,2)$ then $\deg(\Delta)=140 \cdot 1 + 70 \cdot 1+ 
2 \cdot \alpha + 70 \cdot \beta + 72$, where $\alpha \in \{0,72\}$ and $\beta \in \{2,73\}$.  
If $G \cong SmallGroup(284,3)$ then $\deg(\Delta)= 70 \cdot 1+ 70 \cdot \beta + 143 \cdot 
72$, where $\beta \in \{2,73\}$. If $G \cong SmallGroup(284,2)$, then $\deg(\Delta)=210 
\cdot 1 + 70 \cdot \beta + 3 \cdot 72$, where $\beta \in \{2,73\}$. In all these cases, by 
direct checking $\deg(\Delta)$ does not satisfies \eqref{delta}, contraddicition.

{\bf \item $|G|=355$.} There are $2$ groups of order $355$ up to isomorphism, namely $G 
\cong C_{71} \rtimes C_5$ or $G \cong C_{355}$, where $C_n$ denotes a cyclic group of order 
$n$. In the former case $\deg(\Delta)=70 \cdot \alpha + 284 \cdot 2$, where $\alpha \in 
\{2,73\}$. By direct checking $\deg(\Delta)$ satisfies \eqref{delta} if and only if 
$\alpha=2$. Thus from the Riemann-Hurwitz formula $g(\cH_{72} / G)=7=g(\mathcal{F})$. 
Geometrically, the elements of $C_{71}$ have exactly one fixed point $P \in \cH_{72}$ while 
the elements of $C_5$ fix exactly the $\mathbb{F}_{71^2}$-rational vertexes of a triangle 
$T=\{P,Q,R\}$, where $Q \in \cH_{72}$ and $R \not\in \cH_{72}$. Let $H \cong 
C_{71^2-1}<PGU(3,71)$ fixing $T$ point-wise. Clearly $C_5<C_{71^2-1}$ and since $H$ 
normalizes $C_{71}$ $\tilde H=\langle C_{71},H \rangle =C_{71} \rtimes H$. Since $PSL(2,8)$ 
contains no subgroups of order $|\tilde H/G|$ the curves $\mathcal{F}$ and $\cH_{72}/G$ are 
not isomorphic. 

This shows that if $\cH_{72}/G \cong \mathcal{F}$, then $G$ must be tame.

\textbf{Case 2: $71$ does not divide $|G|$} We proceed with a case-by-case analysis 
according to $|G|$ and the possible group theoretical structure of $G$ up to isomorphisms. 
In most of the obtained cases a numerical contradiction to \eqref{delta} is obtained 
using \cite[Thm. 2.7]{MZ}. Here we underline again that, considering subgroups $G$ of 
$PGU(3,71)$ with $|G|$ satisfying one of the cases classified in \eqref{ordini}, 
quotient curves $\cH_{72}/G$ of $\cH_{72}$ of genus $7$ can be obtained. However, in each of 
these cases there exists at least a subgroup of $N_{PGU(3,71)}(G)/G$ which is not 
isomorphic to any subgroup of $PSL(2,8)$. 
The following is the complete list of quotient curves $\cH_{72}/G$ where $|G|$ 
satisfies \eqref{ordini} and $g(\cH_{72}/G)=7$.
   
  {\bf $|G|=72$ and $G \cong C_{72}$.} In this case $C_{71}=\langle \alpha \rangle$ where  $\alpha$ is either of type (A) or of type (B1) from \cite[Lemma 2.2]{MZ}. 
In both cases we can assume up to conjugation that $\cH_{72}$ is given by the Fermat equation 
$\cH_{72}: x^{72}+y^{72}+z^{72}=0$ and $\alpha$ admits 
a diagonal matrix representation of type 
   $$
   \alpha= [a,b,1]=\begin{pmatrix} a & 0 & 0 \\ 0 &  b & 0 \\ 0 & 0 & 1\end{pmatrix}\, ,
   $$
where $o(a)$ and $o(b)$ divide $p+1=72$. Since $\cH_{72}/G$ inherits at least a cyclic 
diagonal group of order $72$  of type $[\gamma,1,1]$,  $[1,\gamma,1]$ or $[1,1,\gamma]$ for some $\gamma$ of order $72$, we conclude that $\cH_{72}/G$ is not isomorphic to $\mathcal{F}$, as $PSL(2,8)$ does not contains abelian groups of order $72$.

{\bf $|G|=72$ and $G \cong SmallGroup(72,\ell)$ where $\ell \in \{9,18,36\}$.} Arguing as in 
the previous case, we observe that $\aut(\cH_{72}/G)$ inherits a cyclic group of order $n$ 
where $n \mid 72$ and $m \geq 9$. This conflicts with $\aut(\cH_{72}/G)$ to be isomorphic to 
$PSL(2,8)$.

{\bf $|G|=180$ and $G \cong SmallGroup(180,4)$, that is 
$G =\langle \sigma \rangle \cong C_{(71^2-1)/28}$.} Since $\sigma$ is of type (B2) of 
\cite[Lemma 2.2]{MZ}, we can assume that up to conjugation  $\cH_{72}$ has equation 
$x^{71}z+xz^{71}=y^{72}$ and $\sigma$ fixes the vertexes of the fundamental triangle 
$T=\{(1:0:0), (0:1:0), (0:0:1)\}$. Thus, $\sigma$ is given by a matrix representation  
  $$
  \sigma = \begin{pmatrix} a^{72} & 0 & 0 \\ 0 &  a & 0 \\ 0 & 0 & 1\end{pmatrix}\, ,
  $$
where $a \in \mathbb{F}_{71^2}$ with $o(a)=o(\sigma)=180$; 
see \cite[Page 644 case 8]{HKT}. Consider the automorphism $\alpha$ of $\cH_{72}$ given by
    $$
\begin{pmatrix} \xi^{72} & 0 & 0 \\ 0 &  \xi & 0 \\ 0 & 0 & 1\end{pmatrix}\, ,
    $$
where $o(\xi)=71^2-1$. Thus, $G <\langle \alpha \rangle \cong C_{71^2-1}$, 
and hence $\langle \alpha \rangle /G \cong C_{28} < \aut(\cH_{72}/G)$. 
Since $PSL(2,8)$ has no cyclic subgroups of order $28$, 
the curves $\mathcal{H}$ and $\cH_{72}/G$ are not isomorphic.

{\bf $|G|=240$ and $G \cong C_5 \rtimes C_{48}$.} The center $Z(G)$ is cyclic of order $24$. 
Geometrically, $Z(G)=\langle \alpha \rangle$ where $\alpha$ is of type (A) in \cite[Lemma 
2.2]{MZ}. The center of $\alpha$ is given by the fixed common point $P \not\in \cH_{72}$ of 
$C_5$ and $C_{48}$. By direct checking, using again a matrix representation for $C_{48}$ as 
in the previous case, we observe that the entire $C_{71^2-1}<PGU(3,71)$ containing $C_{48}$ 
normalizes $C_5$ as well. This yields the quotient curve $\cH_{72}/G$ to admit a cyclic 
group of automorphisms of order greater than $9$, a contradiction.

{\bf $|G|=240$ and $G \cong C_{240}$.} Arguing as in the case $|G|=180$ we observe that 
$\cH_{72}/G$ admits a cyclic automorphisms group of order $(71^2-1)/240=21$. Since 
$PSL(2,8)$ has no cyclic subgroups of order $21$, the curves $\mathcal{F}$ and $\cH_{72}/G$ 
are not isomorphic.

{\bf $|G|=315$ and $G \cong SmallGroup(315,2)$.} As $\cH_{72}/G$ admits at least a cyclic 
automorphisms group of order $(71^2-1)/315$, the claim follows.

{\bf $|G|=336$ and $G \cong SmallGroup(336,6)$.} In this case a contradiction is obtained 
observing that the quotient curve $\cH_{72}/G$ inherits a cyclic automorphisms group of 
order at least $15$.

{\bf $|G|=324$ and $G \cong SmallGroup(324,81)$.}  In this case a contradiction is obtained 
observing that the quotient curve $\cH_{72}/G$ inherits a cyclic automorphisms group of 
order at least $16$.

We now proceed with a case-by-case analysis for those cases for which a numerical 
contradiction to the Riemann-Hurwitz formula is obtained.

{\bf $|G|=60$.} In this case $\deg(\Delta)=4248=59 \cdot 72$. Since $G$ contains exactly 
$59$ non-trivial elements whose contribution to $\deg(\Delta)$ is at most $72$, every 
non-trivial element of $G$ is a homology and hence in particular $o(\sigma) \mid (q+1)$ for 
every $\sigma \in G \setminus \{id\}$; see \cite[Thm. 2.7]{MZ}. Since $5 \mid |G|$ and $5 
\nmid (p+1)$, this case is not possible.

{\bf $|G|=63$.} In this case $\deg(\Delta)=4214$, and $G_i \cong SmallGroup(63,i)$ for 
$i=1,\ldots,4$. Also, $G_1=\{12_{12},9_{42}, 7_{6},3_2,1_1\}$, $G_2=\{63_{36},21_{12}, 
9_6,7_{6},3_2\}$, $G_3=\{21_{12},7_{6},3_{44},1_1\}$, $G_4=\{21_{48},7_{6},3_{8},1_1\}$, 
where $n_m$ means that there are $m$ elements of order $n$ in the group. By \cite[Thm. 
2.7]{MZ} this gives $\deg(\Delta) \leq3204$, $\deg(\Delta) \leq 684$, $\deg(\Delta) \leq 
3204$, $\deg(\Delta) \leq 684$ respectively, a contradiction.

{\bf $|G|=64$.} Since every $\sigma \in G$ is a $2$-elements, by \cite[Thm. 2.7]{MZ} we have 
that $i(\sigma) \in \{0,2,72\}$. Hence we can write $\deg(\Delta)=4200$ as $72 \cdot i + 2 
\cdot j$ for some $0 \leq i+j \leq 63$. Such $i$ and $j$ do not exist and we have a 
contradiction.

{\bf $|G|=70$.} Arguing as in the previous case, we can write $\deg(\Delta)=4128=72 \cdot i + 2 \cdot j$ for some $0 \leq i+j \leq 69$. By direct computation with MAGMA, the unique possibility is $(i,j)=(57,12)$ and $G \cong SmallGroup(70,k)$ for $k=1,\ldots,4$. Since $70=p-1$, by \cite[Thm. 2.7]{MZ} the elements $\sigma \in G$ such that $i(\sigma)=72$ are those of order equal to $2$. Thus $i$ equals the number of involutions in $G$. If $G \cong SmallGroup(70,1)$ then $i=5$, if $G \cong SmallGroup(70,2)$ then $i=7$, if $G \cong SmallGroup(70,3)$ then $i=35$,  and if $G \cong SmallGroup(70,4)$ then $i=1$. Since in all cases $i \ne 57$ this case cannot occur.

{\bf $|G|=72$.} In this case $G \cong SmallGroup(72,a)$ for $a=1,\ldots, 50$, and 
$\deg(\Delta)=4104$ can be written as $72 \cdot i + 3 \cdot j$ for some $0 \leq i+j \leq 71$ 
by \cite[Thm. 2.7]{MZ}. By direct checking with MAGMA $(i,j)=(57,0)$ thus $G$ does not 
contains Singer subgroups. We consider the remaining cases according to the previous results 
obtained for groups of order $72$. We discard those cases for which $G$ contains more than 
$57$ involutions, which implies $i>57$.

Assume that $G \cong SmallGroup(72,1)$. Since $G$ has a unique involution, which is a 
homology, $G$ fixes an $\mathbb{F}_{71^2}$-rational point $P$ with $P \not\in \cH_{72}$. 
This implies that $G$ is contained in the maximal subgroup $\mathcal{M}_{71}$ of $PGU(3,71)$ 
fixing a $\mathbb{F}_{71^2}$-rational point off $\cH_{72}$. The center of $G$ is cyclic of 
order $4$ and it is generated by a homology. In fact assume by contradiction that $Z(G)$ is 
generated by an element $\gamma$ of type (B1). The elements $\alpha \in G$ of odd order 
commute with $\gamma$ and they fix a common point which is the center of the unique 
involution of $G$.  Thus, $\alpha$ fixes the fixed points of $\gamma$. This implies that the 
entire group $G$ fixes the fixed points of $\gamma$, and hence $G$ fixes pointwise a 
self-polar triangle $T$ with respect to the unitary polarity defined by $\cH_{72}$, $G \leq 
C_{72} \times C_{72}=Stab_{PGU(3,71)}(T)$ and $G$ is abelian, a contradiction. Thus $\gamma$ 
is a homology. From \cite[Page 6]{MZq}, $Z(\mathcal{M}_{71})$ is a cyclic group of order 
$71$ which is generated by a homology of center $P$. This implies that every element $\beta 
\in G \setminus Z(G)$ such that $\langle \beta \rangle$ intersects non-trivially $Z(G)$ is 
of type (B1) since otherwise $\beta \in Z(\mathcal{M}_{71})$ and hence $\beta \in Z(G)$, a 
contradiction. Looking at the subgroups structure of $G$ we get that $G$ contains at most 
$72-2-36=34$ homologies, so this case cannot occur.

If $G \cong SmallGroup(72,i)$, $i=3,4,5,6,8,10,11,12,13,14,16,20,27,28,30,47$, then arguing 
as above we get that $G$ contains at most $35,10,45,41,47,47,35,47,9,9,47,45,17,56,47,47$ 
homologies respectively, a contradiction.

If $G \cong SmallGroup(72,7)$ or $G \cong SmallGroup(72,17)$ then $G$ normalizes three 
distinct subgroups of order $2$, and hence fixes their centers. Then $G$ fixes the vertexes 
of self-polar triangle $T$ with respect to the unitary polarity defined by $\cH_{72}$ but 
$G$ is not abelian. Such a subgroup does not exist.
 
If $G \cong SmallGroup(72,15)$ then two cases are distinguished depending on the unique 
element of $\alpha\in G$ of order $3$ being a homology or not. By direct checking with 
MAGMA, if $\alpha$ is a homology then $\alpha \in Z(G)$. Since $Z(G)$ is trivial, this case 
cannot occur. Thus $\alpha$ is of type (B1): in this cases $G$ contains at most $71-2-24$ 
homologies, a contradiction.
  
If $G \cong SmallGroup(72,19)$. Since $G$ normalizes $7$ groups of order $2$, we have that 
$G$ has $7$ fixed points $P_1,\ldots,P_7$ which are $\mathbb{F}_{71^2}$-rational but not in 
$\cH_{72}$. This proves that in particular every element of $G$ is a homology, a 
contradiction.

The cases $G \cong SmallGroup(72,21)$ and $G \cong SmallGroup(72,22)$ cannot occur as in 
this case the unique subgroup of $G$ of order $3$ must be central, a contradiction.

If $G \cong SmallGroup(72,\ell)$ with $\ell \in \{29,32,33,34,35,37,48,49,50\}$, then $G$ fixes the vertexes of a self-polar triangle $T$ with respect to the unitary polarity defined by 
$\cH_{72}$ but $G$ is not abelian, a contradiction. 

The case $G \cong SmallGroup(72,\ell)$ with $\ell \in \{39,40,41\}$ cannot occur as a 
subgroups of $PGU(3,71)$. In fact $G$ contains a unique elementary abelian subgroup of order 
$9$ which is made by $6$ homologies and $2$ elements of type (B1). Thus elements of order 
$3$ cannot be all conjugate, a contradiction.

The cases $G \cong SmallGroup(72,\ell)$ with $\ell \in \{42,43,44\}$ cannot occur since at 
least one involution of $G$ must be central, a contradiction. The cases $G \cong 
SmallGroup(72,45)$ and $G \cong SmallGroup(72,46)$ cannot occur since at least one element 
of order $3$ in $G$ must be central, a contradiction.

{\bf $|G|=80$.} In this case we can write $\deg(\Delta)=4008=72 \cdot i + 2 \cdot j$, for $0 
\leq i+j \leq 79$. By direct checking with MAGMA the unique possibility is $(i,j)=(55,24)$. 
From \cite[Thm. 2.7]{MZ}, the elements $\sigma$ of $G$ for which $i(\sigma)=72$ are those 
with $o(\sigma) \in \{2,4,8\}$. Forcing $G \cong SmallGroup(80,k)$ to have at least $55$ 
elements of order in $\{2,4,8\}$, we get $k \in \{28,29,30,31,32,33,34,50\}$. Denote by 
$o_i$ the number of elements $\alpha \in G$ such that $o(\alpha)=i$. By direct checking with 
MAGMA we obtain that in each of these cases $o_2+o_4+o_8=63$ and hence $j=79-63=16$, a 
contradiction.

{\bf $|G|=81$.} In this case $\deg(\Delta)=3996=72 \cdot i + 3 \cdot j$ for some $0 \leq i+j 
\leq 80$. By direct checking with MAGMA the unique possibility is $(i,j)=(55,12)$. This 
proves that $G$ must contain at least $12$ elements of order $3$, as the unique elements 
$\beta$ in $G$ with $i(\beta)=3$ are those with $o(\beta)=3$ from \cite[Thm. 2.7]{MZ}. We 
note that these elements cannot be contained in cyclic groups of order $9$, as Singer groups 
of order $9$ does not exists in $PGU(3,71)$ because $9 \nmid (71^2-71+1)$. This yields $G 
\cong SmallGroup(81,k)$ with $k \in \{1,2,3,4\}$. The case $G \cong SmallGroup(81,1)$ cannot 
occur, as $G$ is cyclic of order not a divisor of $71^2-71+1$ and hence $G$ cannot contains 
Singer subgroups. Case $G \cong SmallGroup(81,2)$ cannot occur, as from a composition of 
elements of type (B1) or homologies it is not possible to generate Singer subgroups. Case $G 
\cong SmallGroup(81,3)$ cannot occur, as elements of order $9$ split in $3$ conjugacy 
classes giving rise to at least $18$ elements of order $3$ of type (B1). Hence $j \ne 12$.  
Case $G \cong SmallGroup(81,4)$ cannot occur, since elements of order $3$ generates a 
subgroup of $G$ isomorphic to $C_3 \times C_3$ but Singer subgroups do not commute.

{\bf $|G|=84$.} Since $84\mid (71^2-1)$, by \cite[Thm. 2.7]{MZ} we have that $i(\sigma) \in 
\{0,2,3,72\}$ for any $\sigma \in G$. Writing $\deg(\Delta)=3960=71 \cdot i + 2 \cdot j + 3 
\cdot k$ for $0 \leq i+j+k \leq 83$ we get that $(i,j,k)=(54,3m,2(12-m))$ for some 
$m=0,\ldots,5$ or $(i,j,k)=(55,0,0)$. We note that $(i,j,k)=(55,0,0)$ cannot occur as $G$ 
contains at least an element $\gamma$ with $o(\gamma)=7$ for which $i(\gamma)=2$ from 
\cite[Thm. 2.7]{MZ}. This implies that $(i,j,k)=(54,3m,2(12-m))$ for some $m=0,\ldots,5$ and 
hence $G$ contains at least $54+14=68$ elements $\alpha$ such that $o(\alpha) \in 
\{2,3,4,6,12\}$. By direct checking with MAGMA, $G \cong SmallGroup(84,\ell)$ where $\ell 
\in \{1,2,7,9,11\}$. For all these cases a contradiction is obtained combining the value of 
the parameter $k$ with the lengths of the conjugacy classes of elements of order $3$.

{\bf $|G|=90$.} We argue as in the previous cases. For $\sigma \in G$ we have $i(\sigma) \in 
\{0,2,3,72\}$. We can write $\deg(\Delta)=3008=72 \cdot i + 2 \cdot j+3 \cdot k$, for $0 
\leq i+j+k \leq 90$. Also, we observe that $j \leq 4$ since $G$ contains at least $4$ 
elements of order $5$. By direct computation with MAGMA we obtain that 
$(i,j,k)=(53,3m,2(12-m))$ for some $m=2, \ldots, 11$. Forcing $|\{\sigma \in G \mid 
o(\sigma)=2,3,6,9,18\}| \geq 53$ we get $G \cong SmallGroup(90,\ell)$ where $\ell \in 
\{7,9,11\}$. Assume that $G \cong SmallGroup(90,7)$. Hence $|\{\sigma \in G \mid 
o(\sigma)=2,3,6,9,18\}|=71$ and $j=18=3 \cdot 6$. This yields $k=2(12-6)=12$ which is 
impossible, as there exists a unique conjugacy class of elements of order $3$ whose length 
is not equal to $12$. Assume that $G \cong SmallGroup(90,9)$ or $G \cong SmallGroup(90,11)$. 
Hence $|\{\sigma \in G \mid o(\sigma)=2,3,6,9,18\}|=59$ and $j=30=3 \cdot 10$. This yields 
$k=2(12-10)=4$ which is impossible, as there exists a unique conjugacy class of elements of 
order $3$ whose length is not equal to $4$.

{\bf $|G|=96$.} Here for $\sigma \in G$ we have $i(\sigma) \in \{0,2,3,72\}$ and 
$\deg(\Delta)=3816=72 \cdot i + 2 \cdot j + 3 \cdot k$ for $0 \leq i+j+k \leq 95$. By direct 
computation with MAGMA $(i,j,k)=(52,3m,2(12-m))$ for some $m=0,\ldots,12$ or 
$(i,j,k)=(53,0,0)$. Forcing $\tau=|\{\sigma \in G \mid o(\sigma)=2,3,4,6,8,12,24\}| \geq 52$ 
we have $G \cong SmallGroup(96,\ell)$ with $\ell \in 
\{3,6,7,\ldots,17,20,\ldots,58,61,\ldots,231\}$. We note that elements of order $3$ in $G$ 
are all conjugated as $9 \nmid |G|$. This implies that if $k \ne 0$ then $k$ is exactly the 
number $o_3$ of elements of order $3$ in $G$. Assume that $G \cong SmallGroup(96,3)$. Then 
$\tau=95$ and hence $j=0=3 \cdot 0$. This implies that $k=2(12-0)=24$, since the number of 
elements of order $3$ in $G$ is not equal to $24$ we have a contradiction. Using the same 
argument we can exclude all the remaining cases for $G$. In fact if $\ell=6,7,8$ then 
$\tau=71$ and $k=8 \ne o_3$, if $\ell=9,\ldots,58$ then $\tau=95$ and $k=24 \ne o_3$, if 
$\ell=61,62,63$ then $\tau=71$ and $k=8 \ne o_3$, while if $\ell \geq 64$ then $\tau=71$ and 
$k=8 \ne o_3$.

{\bf $|G|=105$.} We write $\deg(\Delta)=3708=72 \cdot i + 2 \cdot j + 3 \cdot k$ where $10 
\leq i+j+k \leq 104$ as $j \geq 10$ because $G$ admits at least $4$ elements of order $5$ 
and $6$ elements of order $7$. By direct checking with MAGMA either 
$(i,j,k)=(50,3m,2(18-m))$ for some $m=4,\ldots,18$ or $(i,j,k)=(51,3m,2(6-m))$ for some 
$m=4,\ldots,6$. Since the unique elements $\alpha$ in $G$ for which $i(\alpha)$ can be equal 
to $72$ are those of order $3$ and their number is always less then $50$, this case is not 
possible.

{\bf $|G|=108$.} For $\sigma \in G$ we have $i(\sigma) \in \{0,2,3,72\}$ and elements of 
order $27, 54$ or $108$ do not exist as these integers do not occur as element orders in 
$PGU(3,71)$. We can write $\deg(\Delta)=3672=72 \cdot i + 2 \cdot j+3 \cdot k$, for $0 \leq 
i+j+k \leq 107$. By direct checking with MAGMA, $(i,j,k)=(49,3m,2(24-m))$ for some 
$m=0,\ldots,10$ or $(i,j,k)=(50,3m,2(12-m))$ for some $m=0,\ldots,12$ or $(i,j,k)=(51,0,0)$. 
All these conditions force $G \cong SmallGroup(108,\ell)$ with $\ell=6,\ldots,45$.

If $G \cong SmallGroup(108,6)$ then $j=0$ and hence $k \in \{48,24,0\}$. Since $G$ contains 
$8$ elements of order $3$ we get that $k=0$ and $(i,j,k)=(51,0,0)$. Here $G$ contains $36$ 
elements of order $12$ which are all conjugated. They are all of type (B1) because otherwise 
$G$ contains at least $36+18+1+2>51$ homologies, a contradiction. The center $Z(G) \cong 
C_6$ is generated by a homology as it has to commute with $6$ groups of order $12$ which are 
of type (B1). Also, $G$ contains a normal cyclic group of order $18$ which contains the 
central involution $\omega \in Z(G)$. If such a normal subgroup is generated by a homology 
then it must be contained in $Z(G)$, a contradiction. The same holds for the elements of 
order $6$ and $3$ contained in this normal subgroup. Thus $G$ contains at most 
$107-36-2-2-6=61$ homologies. Arguing in the same way for the other elements of order $18$ 
contained in $G$ we get that $G$ contains at most $61-12=49<51$ homologies, a contradiction.

The case $G \cong SmallGroup(108,7)$ cannot occur as a subgroup of $PGU(3,71)$. In fact 
$Z(G) \cong C_{18}$ and it is generated by homologies as it has to commute with the elements 
of order $36$ which are of type (B2) from \cite[Thm. 2.7]{MZ}. Clearly, the elements of 
order $12$ contained in cyclic groups of order $36$ are homologies as they are powers of 
elements of type (B2). This implies that there exist at least $6$ elements of order $12$ 
which are contained in $Z(G)$, a contradiction.

Assume that $G \cong SmallGroup(108,8)$. Thus, $j=0$ and $k=0$ since $G$ contains exactly 
$26$ elements of order $3$ which are normalized by involutions. This yields 
$(i,j,k)=(52,0,0)$. Arguing as in the previous case, we get that elements of order $12$ and 
$4$, which are all conjugated, are of type (B1), as they contain a central involution not 
belonging to the center of $G$. Thus $G$ contains at most $107-4 \cdot 9 - 9 \cdot 2$ 
homologies. On the other hand, there exists no cyclic subgroup of $G$ of order $6$ generated 
by homology, containing a central involution, and not contained in the center of $G$ and 
therefore $i\neq52$.

If either $G \cong SmallGroup(108,9)$, $G \cong SmallGroup(108,10)$ or $G \cong 
SmallGroup(108,11)$ then a contradiction is obtained arguing as in the previous case.

Denote by $o_i$ the number of elements having order equal to $i$ in $G$. We can exclude all 
the cases for which $o_2+o_3+o_4+o_6+o_9+o_{12}+o_{18}=107$, $o_2=1$ and $o_6>2$. This 
allows us to consider just the cases $G \cong SmallGroup(108,\ell)$ with $\ell \in 
\{12,14,15,\ldots,31,36,\ldots,45\}$.

Assume that $G \cong SmallGroup(108,12)$. Then $G$ fixes an $\mathbb{F}_{71^2}$-rational 
point $P$ with $p \not\in \cH_{72}$ as $G$ has a central involution. Since $G$ contains 
elements of type (B2), $Z(G)$ must be cyclic and generated by a homology. Thus, this case 
cannot occur.

Assume that $G \cong SmallGroup(108,14)$. Since $j=36$ we get that $(i,j,k)=(50,36,0)$. The 
elements of order $12$ contained in $Z(G)$ are homologies. The other elements of order $12$, 
the non-central elements of order $6$, and the elements of order $18$ in $G$ are of type 
(B1), as the cyclic groups that they generate have non-trivial intersection with $Z(G)$, a 
contradiction. This yields that $G$ contains at most $71$ homologies. To have $i=50$ we need 
$21$ homologies more, but this cannot happen as the lengths of the conjugacy classes of the 
remaining elements in $G$ are all even.

Assume $G \cong SmallGroup(108,15)$. Here $j=0$ and either $k=0$ and $i=51$ or $k=24$ and 
$i=50$. The elements of order $12$ in $G$ are of type (B1), because otherwise $G$ contains 
at least $36+18+1$ homologies, a contradiction. This yields $G$ to have at most $71$ 
homologies. Since $Z(G)$ commutes with all these elements of type (B1) it is generated by a 
homology. We note that the elements of order $6$ are of type (B1) because they are obtained 
as a composition $\alpha \beta$ where $\alpha$ is an involution which is not central and 
$\beta$ is a central element of order $3$. This implies that $G$ contains at most $71-18=53$ 
homologies. The right number of homologies cannot be obtained as the elements of order $4$ 
must be all homologies but the elements of order $3$ are divided into conjugacy classes of 
length $12$.

Assume that $G \cong SmallGroup(108,16)$. In this case $j=0$ and since $G$ contains just $6$ 
elements of order $3$ we get $k=0$ and $i=51$. Looking at the conjugacy classes of elements 
in $G$ we observe that $G$ contains $39$ involution, which are then homologies, and that 
just the elements of order $3$ or $9$ can be homologies as well, otherwise a direct 
computation shows that $i>51$. The unique possibility is that the normal subgroup $C_9$ of 
$G$ is generated by a homology which implies that $G$ fixes a point $P$ which is 
$\mathbb{F}_{71^2}$-rational but $P \not\in \cH_{72}$. By \cite[Page 6]{MZq} $C_9$ must be 
contained in $Z(G)=\{id\}$, contradiction.

Assume that $G \cong SmallGroup(108,17)$. Since $j=0$ we have that either $i=50$ or $i=51$. 
Denote by $T:=\{\alpha \in G \mid o(\alpha) \ne 2, \ and \ \alpha \ is \ a \ homology\}$. 
Then we require that either $|T|=23$ or $|T|=24$. Looking at the lengths of the conjugacy 
classes of elements in $G$ we observe that they are all even, and hence the case $|T|=23$ 
cannot occur. Since the elements of order $6$ are divided into three conjugacy classes each 
of length $18$, they are all of type (B1) since otherwise $|T|>20$ and $G$ contains no 
conjugacy classes of length $4$ or two conjugacy classes of length equal to $2$. Thus, the 
homologies of $T$ are all of order $3$. The unique possibility is that the remaining $24$ 
homologies of order $3$ are divided into $3$ distinct conjugacy classes, of length $12$, $6$ 
and $6$ respectively. This group cannot occur as a subgroup of $PGU(3,71)$ as every subgroup 
of type $C_3 \times C_3$ of $PGU(3,71)$ contains exactly $6$ homologies and $2$ elements of 
type (B1), a contradiction.

Assume that $G \cong SmallGroup(108,18)$. Denoting $T$ as before, we note that $|T|=48$ as 
the lengths of the conjugacy classes of elements in $G$ are all even. Since by a direct 
analysis of the conjugacy classes in $G$, $|T|\leq 31$, we have a contradiction.

Assume that $G \cong SmallGroup(108,19)$. Since $G$ fixes a points $P$ which is 
$\mathbb{F}_{71^2}$-rational but $P \not\in \cH_{72}$ and $Z(G)$ is cyclic and generated by 
a homology, we get that every $\beta \in G$ such that $\beta \not\in Z(G)$ and $|Z(G) \cap 
\langle \beta \rangle|>1$ is of type (B1). This implies that $48=|T| \leq 29$, a 
contradiction.

Assume that either $G \cong SmallGroup(108,20)$ or $G \cong SmallGroup(108,21)$. Here we 
need $|T|=48$. Since the length of the $3$ conjugacy classes of elements of order $9$ is 
$24$, we have that at most one of these conjugacy classes is given by homologies. Elements 
of order $3$ split into $4$ conjugacy classes each of length $2$. Since they give rise to a 
subgroup of $G$ which is isomorphic to $C_3 \times C_3$ we get that $6$ of them are 
homologies while $2$ of them are of type (B1) and that they fix the vertexes of a self-polar 
triangle. Since $G$ contains also homologies of order $2$ lying on the sides of the same 
triangles, $G$ must contain also $6$ homologies of order $6$, which are obtained as a 
composition $\alpha \beta$ of homologies having a common center and such that $o(\alpha)=2$, 
$o(\beta)=3$. The remaining $24$ elements of order $9$ cannot be homologies since otherwise 
$G$ must contain a subgroup isomorphic to $C_{18}$ generated by a homology of order $9$ and 
an involution. Since this implies that $|T|<51$ we have a contradiction.

Assume that $G \cong SmallGroup(108,22)$. Here $j=0$ and either $k=0$ with $|T|=48$, or 
$k=48$ with $i=46$ or $k=24$ with $i=47$.  The lengths of the conjugacy classes of elements 
of order which is not equal to $2$ are all even. This shows that the last case cannot occur. 
The elements of order $3$ generate $13$ subgroups of type $C_3 \times C_3$ sharing the same 
normal subgroup of $G$ of order $3$ which is generated by a homology, as different groups of 
type $C_3 \times C_3$ fix different $\mathbb{F}_{71^2}$-points. Since $G$ contains $26$ 
elements of type (B1) and the lengths of the conjugacy classes of elements of order $3$ are 
$(24,24,24,6,2)$, where the last class is given by two homologies, this case cannot occur.

Assume that $G \cong SmallGroup(108,23)$. In this case $|T|=32$ and $Z(G) \cong C_6$. Since 
$G$ contains two normal subgroup isomorphic to $C_6$, we note that just the central one is 
generated by a homology. Arguing as in the previous cases, observing that if $\alpha \not\in 
Z(G)$ and $\langle \alpha \rangle \cap Z(G)$ is not trivial then $\alpha$ is of type (B1), 
we get that $|T|<51$, a contradiction.

Assume that $G \cong SmallGroup(108,24)$. In this case $Z(G) \cong C_{18}$ is generated by a 
homology, $(i,j,k)=(51,0,0)$ and $|T|=44$. If $\alpha \not\in Z(G)$ and $\langle \alpha 
\rangle \cap Z(G)$ is not trivial then $\alpha$ is of type (B1). This yields $|T|<51$, a 
contradiction.

The cases $G \cong SmallGroup(108,25)$ and $G \cong SmallGroup(108,26)$ cannot occur as 
subgroups of $PGU(3,71)$. In fact, it is sufficient to observe that from the number and 
intersection structures of subgroups of type $C_3 \times C_3$ of $G$ at least a subgroup 
$C_3$ must be generated by a homology having the same center and axis of the generator of 
$Z(G)$. Since this implies that $C_3 \subseteq Z(G)$ but $|Z(G)|=2$ , we have a 
contradiction.

The case $G \cong SmallGroup(108,27)$ cannot occur as $G$ contains $55$ involutions implying 
that $|T|>55$.

Assume that $G \cong SmallGroup(108,28)$. This case can be discarded observing that $G$ 
contains $4$ subgroups isomorphic to $C_3 \times C_3$, and hence $8$ elements of type (B1). 
The lengths of the conjugacy classes of elements of order $3$ are $(6,6,6,6,2)$ where the 
last two elements are contained in $Z(G)$ and hence are homologies, a contradiction.

Assume that $G \cong SmallGroup(108,29)=C_9 \times C_3 \times C_2 \times C_2$. Here $G$ 
fixes the vertexes of a self-polar triangle and we need $|T|=52$. Since by direct checking 
$|T|=3+(2+2+2)+(2+2+2)+(6+6+6)=33$, this case cannot occur.

Assume that $G \cong SmallGroup(108,30)$ or $G \cong SmallGroup(108,31)$. Then $G$ fixes the 
vertexes of a self-polar triangle but $G$ is not abelian, a contradiction.

Assume that $G \cong SmallGroup(108,36)$. Then $(i,j,k)=(51,0,0)$ and $|T|=42$. Since having 
homologies of order $12$ would imply that $|T|>42$, we get that they are all of type (B1) 
and $Z(G)$ is generated by a homology of order $3$. Since there exist subgroups of type $C_3 
\times C_3$ not containing the central element of order $3$ this case cannot occur.

Assume that $G \cong SmallGroup(108,37)$. Since $(i,j,k)=(51,0,0)$ then the elements of 
order $6$ are homologies and there are just two elements of order $3$ are of type (B1). 
Since these two elements must be contained in a subgroup $C_6$ generated by a homology we 
have a contradiction.

Assume that $G \cong SmallGroup(108,38)$. This case cannot occur as $G$ contains too many 
elements of order $3$ fixing the vertexes of the same self-polar triangle.

Assume that either $G \cong SmallGroup(108,39)$ or $G \cong SmallGroup(108,40)$. In this 
case $k=0$ and $|T|=12$. Since $G$ contains $13$ subgroups of type $C_3 \times C_3$, and 
hence more than $12$ homologies, this case cannot occur.

Assume that $G \cong SmallGroup(108,41)$. In this case $Z(G) \cong C_3 \times C_3$ and the 
three involutions of $G$ are not central, a contradiction.

Assume that $G \cong SmallGroup(108,42)$. This group cannot occur as a subgroup of 
$PGU(3,71)$ as it fixes a point $P$ which is $\mathbb{F}_{71^2}$-rational but $P \not\in 
\cH_{72}$, its center is isomorphic to $C_3 \times C_3 \times C_2$ and $G$ contains too many 
elements of order $3$ fixing the same self-polar triangle.

Assume that $G \cong SmallGroup(108,43)$. This group cannot occur as a subgroup of 
$PGU(3,71)$ as $G$ cannot normalize $5$ groups of order $3$.

The case $G \cong SmallGroup(108,44)$ cannot occur as $G$ contains $55$ involution and hence 
$|T|>51$.

Assume that $G \cong SmallGroup(108,45)$. Since $G$ fixes the vertexes of a self-polar 
triangle but contains more than one subgroup isomorphic to $C_3 \times C_3$ this case cannot 
occur.

{\bf $|G|=112$.} From \cite[Thm. 2.7]{MZ} for $\sigma \in G$ we have $i(\sigma) \in 
\{0,2,72\}$. By direct checking with MAGMA the pairs $(i,j)$ with $\deg(\Delta)=3624=72 
\cdot i + 2 \cdot j$ and $0 \leq i+j \leq 111$, are $(i,j)=(49,48)$ and $(i,j)=(50,12)$. 
Since there are no groups of order $112$ having at least $49$ elements of order equal to 
$2$, $4$ and $8$ and either $48$ or $12$ elements of order in $\{7,14,16,28,56,112\}$, this 
case cannot occur.

{\bf $|G|=120$.} We write $\deg(\Delta)=3528=71 \cdot i + 2 \cdot j + 3 \cdot k$ where $4 
\leq i+j+k \leq 119$ as $G$ contains at least a subgroup of type $C_5$ and hence $j \geq 4$ 
from \cite[Thm. 2.7]{MZ}. By direct checking with MAGMA either $(i,j,k)=(47,3m,2(24-m))$ for 
some $m=2,\ldots,24$ or $(i,j,k)=(48,3m,2(12-m))$ for some $m=2,\ldots,12$. Denote as before 
$o_i$ the number of elements in $G$ or order $i$. Then from \cite[Thm. 2.7]{MZ}, 
$j=119-(o_2+o_3+o_4+o_6+o_8+o_{12}+o_{24})$. Forcing $3$ to divide $j$ we get that no groups 
of order $120$ satisfies the condition, a contradiction.

  {\bf $|G|=126$.} We write $\deg(\Delta)=3456=72 \cdot i + 2 \cdot j + 3 \cdot k$, for some 
$6 \leq i+j+k \leq 125$, as $G$ contains at least $6$ elements of order $7$ which implies $j 
\geq 6$ from \cite[Thm. 2.7]{MZ}. By direct checking with MAGMA, either 
$(i,j,k)=(45,3m,2(36-m))$ for some $m=2,\ldots,8$, or $(i,j,k)=(46,3m,2(24-m)$ for some 
$m=2,\ldots,24$, or $(i,j,k)=(47,3m,2(12-m))$ for some $m=2,\ldots,12$. There are $16$ 
groups of order $126$ up to isomorphism, we will denote them as $SmallGroup(126,\ell)$, with 
$\ell=1,\ldots,16$. Table \ref{tabella126} summarizes the possible values $m$, $i$, and $k$ 
corresponding to each $G \cong SmallGroup(126,\ell)$ in order to obtain a quotient curve 
$\cH_{72}/G$ of genus $7$, according to the number of elements of order in 
$\{7,14,21,42,63,126\}$, that is, the value of $j$.
   \begin{center}
\begin{table}[htbp]
\begin{small}
\caption{$G \cong SmallGroup(126,\ell)$, values for $i$, $k$ and $m$}\label{tabella126}
\begin{tabular}{|c|c|c|}
\hline $\ell$ & $m$ & $(i,k)$ \\
\hline\hline 1 & 6 & (45,69), (46,36), (47,12)\\
\hline  2 & 12 & (46,24), (47,0)\\
\hline 4 &  18 & (46,12) \\
\hline 5 & 18 & (46,12)\\
\hline 7 & 6 & (45,60), (46,36), (47,12)\\
\hline 8 & 12 & (46,24),(47,0)\\
\hline 9 & 6 & (45,60), (46,36),(47,12)\\
\hline 10 & 12 & (46,24),(47,0)\\
\hline 11 & 18 & (46,12)\\
\hline 13 & 18 & (46,12)\\
\hline 15 & 18 & (46,12)\\
\hline
\end{tabular}
\end{small}
\end{table}
    \end{center}
    If $G \cong SmallGroup(126,1)$ then $G$ contains just $2$ elements of order $2$, a 
contradiction.

If $G \cong SmallGroup(126,2)$ then $G$ contains $2$ elements of order $2$ and hence $k=0$ 
and $(i,j,k)=(47,36,0)$. The elements of order $18$ in $G$ are of type (B1) because 
otherwise $i>47$, and all the remaining elements are homologies. Every subgroup $C_9=\langle 
\alpha \rangle$ in $G$ is such that $\alpha^3$ generates the unique subgroup of order $3$ of 
$G$. Since there exists just one cyclic group of order $9$ generated by a homology having a 
fixed center, we have a contradiction.

Assume that $G \cong SmallGroup(126,4)$. This case can be excluded observing that $G$ 
contains just $2$ elements of order $3$, but we need $k=12$.

The case $G \cong SmallGroup(126,5)$ cannot occur as $G$ contains just $2$ elements of order 
$3$.

Assume that $G \cong SmallGroup(126,7)$. Since $G$ contains $44$ elements of order $3$, 
either $(i,k)=(46,36)$ or $(i,k)=(47,12)$. A contradiction is obtained observing that the 
lengths of the conjugacy classes of elements of order $3$ are $(14,14,14,2)$ and hence 
neither $k=36$ nor $k=12$ can be obtained.

Assume that $G \cong SmallGroup(126,8)$. Since the lengths of the conjugacy classes of 
elements of order $3$ are $(28,14,2)$ we get that $(i,k)=(47,0)$. The elements of order $6$ 
are all of type (B1) because otherwise $i>47$ and all the remaining elements are homologies. 
All these elements normalize a group isomorphic to $C_3$ which is generated by a homology, 
hence they share a fixed point and commute. This implies that every $\alpha \beta$ with 
$o(\alpha)=2$ and $o(\beta)=3$ generates an element of order $6$. Since there are just $42$ 
elements of order $6$ we have a contradiction.

Assume that $G \cong SmallGroup(126,9)$. A contradiction is obtained as before looking at 
the lengths of the conjugacy classes of elements of order $3$ with respect to the admissible 
values of $k$.

Assume that $G \cong SmallGroup(126,10)$. Since $G$ normalizes an involution, we get that 
$G$ fixes a point $P$ which is $\mathbb{F}_{71^2}$-rational but $P \not\in \cH_{72}$. Thus 
$G$ cannot contain Singer subgroups and $(i,j,k)=(47,36,0)$. Since $G$ contains a unique 
element of order $2$, which is central, arguing as before, we get that every element of 
order $6$ that does not generate $Z(G)$ is of type (B1) because otherwise it would be 
central itself. Thus $Z(G)$ is generated by a homology, the remaining elements of order $6$ 
are of type (B1) and the remaining elements of $G$ are all homologies. Also, $G$ contains 
$7$ elementary abelian group of type $C_3 \times C_3$ and hence at least $14$ elements of 
type (B1) of order $3$, a contradiction.

The cases $G \cong SmallGroup(126,13)$ and $G \cong SmallGroup(126,15)$ cannot occur as $G$ 
contains just $6$ and $8$ elements of order $3$ respectively.

  {\bf $|G|=128$.} We write $\deg(\Delta)=3432=72 \cdot i + 2 \cdot j$ with $0 \leq i+j \leq 
127$. By direct checking with MAGMA this implies that either $(i,j)=(46,60)$ or 
$(i,j)=(47,24)$. Denote by $o_i$ the number of elements in $G$ having order equal to $i$. 
Since a group of order $126$ with $o_{16}+o_{32}+o_{64}+o_{128}= 60$ and 
$127-(o_{16}+o_{32}+o_{64}+o_{128}) \geq 46$ or $o_{16}+o_{32}+o_{64}+o_{128}=24$ and 
$127-(o_{16}+o_{32}+o_{64}+o_{128}) \geq 47$ does not exist, we get a contradiction.

{\bf $|G|=135$.} Writing $\deg(\Delta)=3348=71 \cdot i + 2 \cdot j + 3 \cdot k$ for $4 \leq 
i+j+k \leq 134$, we get that either $(i,j,k)=(43,3m,2(42-m))$ for some $m=2,\ldots,7$, or 
$(i,j,k)=(44,3m,2(30-m))$ for some $m=2,\ldots,29$, or $(i,j,k)=(45,3m,2(17-m))$ for some 
$m=2,\ldots,17$, or $(i,j,k)=(46,3m,2(6-m))$ for some $m=2,\ldots,6$. Since by direct 
checking with MAGMA there are no groups of order $135$ with $o_3+o_9 \geq 43$ and $3$ 
dividing $134-(o_3+o_9)=j$, this case cannot occur.

{\bf $|G|=140$.} We write $\deg(\Delta)=3228=72 \cdot i + 2 \cdot j$ for some $4 \leq i+j 
\leq 139$. This yields either $(i,j)=(43,96)$ or $(i,j)=(44,60)$ or $(i,j)=(45,24)$. Since 
there are no groups of order $140$ with $o_2+o_4 \geq 43$ and $139-(o_2+o_4) \in 
\{96,60,24\}$, this case cannot occur.

{\bf $|G|=144$.} We write $\deg(\Delta)=3240=72 \cdot i + 2 \cdot j + 3 \cdot k$ for some $0 
\leq i+j+k \leq 143$. By direct checking with MAGMA we get that $(i,j,k)=(41,3m,2(48-m))$ 
for some $m=0,\ldots,6$, or $(i,j,k)=(42,3m,2(36-m))$ for some $m=0,\ldots,29$, or 
$(i,j,k)=(43,3m,2(24-m))$ for some $m=0,\ldots,24$, or $(i,j,k)=(44,3m,2(12-m))$ for some 
$m=0,\ldots,12$, or $(i,j,k)=(45,0,0)$.

Assume that $G \cong SmallGroup(144,1)$. In this case $G$ contains $72$ elements of order 
$16$ which are of type (B2) by \cite[Thm. 2.7]{MZ}. Thus $Z(G) \cong C_8$ is generated by a 
homology. Arguing as before, since $G$ has just one involution which is central, we get that 
$G$ contains at most $8+1$ homologies, a contradiction.

If $G \cong SmallGroup(144,2)$ then $G$ is cyclic and hence it contains $71$ homologies, a 
contradiction.

Assume that $G \cong SmallGroup(144,3)$. Since $j=0$ and the number of elements of order $3$ 
is $2$ we have $(i,j,k)=(45,0,0)$. This implies that the $96$ elements of order $9$, which 
are all conjugated, are of type (B1), and all the remaining elements but $2$ are homologies. 
The non-trivial elements of a group of order $3$ are of type (B1) and obtained as powers of 
a homology of order $6$, a contradiction.

Assume that $G \cong SmallGroup(144,4)$. In this case $Z(G) \cong C_2$, $(i,j,k)=(45,0,0)$ 
and, arguing as before, homologies can occur only of odd order. Then $i<45$, a 
contradiction.

Assume that $G \cong SmallGroup(144,\ell)$ for $\ell \in \{5,\ldots,27,31,\dots,50\}$. Every 
cyclic group that intersects non-trivially $Z(G) \cong C_8$ is generated by an element of 
type (B1), $G$ contains less than $41$ homologies, a contradiction.

Assume that $G \cong SmallGroup(144,\ell)$ with $\ell \in \{28,29,30,51\}$. Here $m=24$ and 
$G$ contains exactly one involution. Thus, $(i,j,k)=(47,72,0)$. A contradiction is obtained 
observing that $G$ has one involution and the remaining elements are divided into conjugacy 
classes of even length.

Assume that $G \cong SmallGroup(144,52)$. Since $G$ contains $8$ elements of order $3$ and 
$j=0$ we get $(i,j,k)=(45,0,0)$. A contradiction is obtained observing that $Z(G) \cong C_4$ 
is generated by a homology, $G$ fixes a point $P$ which is $\mathbb{F}_{71^2}$-rational but 
$P \not\in \cH_{72}$ and $i \leq 39$.

Arguing as in the previous case, all $G \cong SmallGroup(144,\ell)$ with $\ell \in 
\{53,54,\ldots,62\}$ can be excluded.

Assume that $G \cong SmallGroup(144,\ell)$ with $\ell \in 
\{63,\ldots,67,69,70,71,72,74,76,\ldots,79,84,86,90,92,\ldots,95,100,105,\ldots,108,134\}$. 
A contradiction is obtained observing that $G$ fixes the vertexes of a self-polar triangle 
and $G$ is not abelian, thus $G$ cannot occur as a subgroup of $PGU(3,71)$.

Assume that $G \cong SmallGroup(144,68)$. In this case $G$ contains $98$ elements of order 
$3$ divided into $4$ conjugacy classes of lengths $(28,28,28,2)$. The conjugacy classes of 
length $2$ cannot be given by elements of Singer subgroups, as $2$ divides the order of 
their normalizers. We note that $k \in \{96,72,48,24\}$ or $k=0$. Since $96,72,48,24$ are 
not divisible by $28$, we get $(i,j,k)=(45,0,0)$.  $G$ contains $16$ elementary abelian 
groups of order $9$: this gives rise to $16 \cdot 4$ homologies and so this case can be 
excluded.

If $G \cong SmallGroup(144,73)$ then $G$ has a normal involution and hence it fixes a point 
$P\not\in \cH_{72}$ which is $\mathbb{F}_{71^2}$-rational. As before we can obtain a 
contradiction noting that each cyclic group which intersects $Z(G)$ non-trivially is 
generated by an element of type (B1). This forces $i<45$, a contradiction.

If $G \cong SmallGroup(144,75)$ then either $G$ fixes the vertexes of a self-polar triangle 
without being abelian or $i<45$, a contradiction.

Assume that $G \cong SmallGroup(144,\ell)$ with $\ell \in 
\{80,81,82,83,85,89,97,99,115,121,122,124,126,128,\ldots,133,1135,\ldots,140\}$. Since $G$ 
normalizes an involution, it fixes a point $P \not\in \cH_{72}$ which is 
$\mathbb{F}_{71^2}$-rational, hence $(i,j,k)=(45,0,0)$. Every cyclic group intersecting 
non-trivially $Z(G)$ is generated by an element of type (B1). Thus $i<4$, a contradiction.

Assume that $G \cong SmallGroup(144,87)$. In this case $(i,j,k)=(45,0,0)$, $G$ contains $37$ 
involutions and $6$ homologies of order $3$ which are contained in a subgroup isomorphic to 
$C_3 \times C_3$. Then the number of the remaining homologies is $45-37-6=2$. Since there 
are no conjugacy classes for the remaining elements of length $2$ we get a contradiction.

If $G \cong SmallGroup(144,\ell)$ with $\ell \in \{88,96\}$ then $G$ contains more than $45$ 
involutions, a contradiction.

Assume that $G \cong SmallGroup(144,91)$. This group cannot occur as a subgroup of 
$PGU(3,71)$ as the number and the conjugacy class structures of elements of order $6$ are 
not compatible with the ones of elements of order $3$ and $2$.

Assume that $G \cong SmallGroup(144,98)$. The center $Z(G)$ is cyclic of order $2$, and 
hence as before the elements that generate a cyclic group containing $Z(G)$ are of type 
(B1). Since $G$ contains an elementary abelian group of order $9$ and $37$ involutions then 
$G$ contains at least $37+(2+2+2)$ homologies and we need to find two extra homologies. 
Since $C_4$ is not central it cannot be generated by a homology. We get a contradiction 
looking at the lengths of the conjugacy classes of the remaining elements.

If $G \cong SmallGroup(144,104)$ then $G$ contains at least $3$ homologies of order $2$, $6$ 
homologies of order $3$ and $6$ homologies which are contained in a subgroup of type $C_6 
\times C_6$. We need to find other extra $30$ homologies, but this number is not compatible 
with the lengths of the remaining conjugacy classes, as $4$ does not divide $30$.

Assume that $G \cong SmallGroup(144,123)$. To obtain the right homologies configuration we 
need $12$ homologies of order $4$ and $6$ homologies of order $6$. The center $Z(G)$ is 
cyclic of order $6$ and it is generated by a homology. Therefore the other elements $\alpha$ 
of order $6$ such that $\langle \alpha\rangle \cap Z(G)$ is an element of order $3$ in 
$Z(G)$ cannot be homologies because they are not central. A contradiction.

Assume that $G \cong SmallGroup(144,125)$. In this case $G$ contains $37$ homologies of 
order $2$. We need $45-37$ extra homologies, but this is not compatible with the lengths of 
the conjugacy classes of elements in $G$.

Assume that $G \cong SmallGroup(144,127)$. Other than the involution we need $G$ to contain 
$26$ homologies and hence every element of order $3$ must be a homology. Since they form an 
elementary abelian group $C_3 \times C_3$ which contains at least $2$ elements of type (B1) 
we get a contradiction.

{\bf $|G|=160$.} Writing $\deg(\Delta)=3048=72 \cdot i + 2 \cdot j$ for $4 \leq i+j \leq 
159$ we get that $(i,j) \in \{(39,120),(40,84),(41,48),42,12)\}$. Since by direct checking 
with MAGMA there are no groups of order $160$ with at least $39$ elements of order in 
$\{2,4,8\}$ and such that $3$ divides the number of elements of order in 
$\{5,10,16,20,32,40,80,160\}$, we get a contradiction.

{\bf $|G|=162$.} We first observe that $G$ must not contain elements of order in 
$\{27,54,81,162\}$, since such orders do not occur for elements in $PGU(3,71)$; see 
\cite[Lemma 2.2]{MZ}. Writing $\deg(\Delta)=3024=72 \cdot i + 3 \cdot j$ for $0 \leq i+j 
\leq 161$ we get that $(i,j) \in \{(37,120),(38,96),(39,72),(40,48),(41,24),(42,0)\}$. Since 
by direct checking with MAGMA there are no groups of order $162$ with at least $37$ elements 
of order in $\tau=\{2,6,9,18\}$ and such that either there no elements of order not in 
$\tau$, or their number is divisible by $24$, we get a contradiction.

{\bf $|G|=168$.} Writing $\deg(\Delta)=2952=72 \cdot i + 2 \cdot j+3 \cdot k$ for $6 \leq 
i+j+k \leq 159$ we get that
 $(i,j,k)=(36,3m,2(60-m))$ for some $m=2,\ldots,11$, or $(i,j,k)=(37,3m,2(48-m))$ for some $m=2,\ldots,34$, or $(i,j,k)=(38,3m,2(36-m))$ for some $m=2,\ldots,36$, or $(i,j,k)=(39,3m,2(24-m))$ for some $m=2,\ldots,24$, or $(i,j,k)=(40,3m,2(12-m))$ for some $m=2,\ldots,12$. By direct checking with MAGMA the condition that either $j=0$ or $3 \mid j$ and $i \geq 39$ yields $G \cong SmallGroup(168,\ell)$ with $\ell \in \{1,4,5,7,\ldots,18,23,\ldots,28,34,\ldots,38,42,43,46,\ldots,51,53,54,56\}$.
 
The cases $G \cong SmallGroup(144,\ell)$ with $\ell \in \{1,7,\ldots,11,23,42,43,47,49,53\}$ 
can be excluded because $2 \mid |Z(G)|$, which implies that $G$ fixes a point $P \not\in 
\cH_{72}$ which is $\mathbb{F}_{71^2}$-rational and hence that $k=0$, a contradiction to the 
required values $(i,j,k)$ for $\deg(\Delta)$.

The cases $G \cong SmallGroup(144,\ell)$ with $\ell \in 
\{4,5,12,\ldots,18,24,\ldots,28,34,37\}$ can be excluded using the fact that $2 \mid 
|Z(G)|$. Hence every element of even order generating a cyclic group containing an 
involution of the center cannot be an homology without being central. This yields $G$ to 
contain a small number of homologies.

The cases $G \cong SmallGroup(144,\ell)$ with $\ell \in \{35,36,38,46,50,56\}$ can be 
excluded because the number of involutions contained in $G$ is greater than the total number 
of homologies that $G$ can have according to the desired value of $\deg(\Delta)$.

The cases $G \cong SmallGroup(144,\ell)$ with $\ell \in \{48,54\}$ can be excluded because 
$G$ fixes the vertexes of a self-polar triangle but $G$ is not abelian.

{\bf $|G|=180$.} Writing $\deg(\Delta)=2808=72 \cdot i + 2 \cdot j+3 \cdot k$ for $4 \leq 
i+j+k \leq 179$ we get that $(33,6,140)$, or $(i,j,k)=(34,3m,2(60-m))$ for some 
$m=2,\ldots,25$, or $(i,j,k)=(35,3m,2(48-m))$ for some $m=2,\ldots,48$, or 
$(i,j,k)=(36,3m,2(36-m))$ for some $m=2,\ldots,36$, or $(i,j,k)=(37,3m,2(24-m))$ for some 
$m=2,\ldots,24$, or $(i,j,k)=(38,3m,2(12-m))$ for some $m=2,\ldots,12$.

The cases $G \cong SmallGroup(180,\ell)$ with $\ell \in \{1,10,14,23\}$ can be excluded as 
$5 \mid |Z(G)|$ yields $G$ to contain too many homologies.

The cases $G \cong SmallGroup(180,\ell)$ with $\ell \in \{2,3,5,9,15,16,17\}$ can be 
excluded since $2 \mid |Z(G)|$ and hence every element of even order generating a cyclic 
group containing an involution of the center cannot be an homology without being central. 
This forces $G$ to contain a small number of homologies.

The cases $G \cong SmallGroup(180,\ell)$ with $\ell \in \{7,11,25,27,29,30,34,36\}$ can be 
excluded since $G$ contains too many involutions and hence too many homologies.

The cases $G \cong SmallGroup(180,\ell)$ with $\ell \in \{6,8,13,18,20,28,31,32,33,35,37\}$ 
can be excluded as they cannot occur as subgroups of $PGU(3,71)$.

The cases $G \cong SmallGroup(180,\ell)$ with $\ell \in \{12,22\}$ can be excluded as $G$ 
fixes the vertexes of a self-polar triangle and it is not abelian.

The cases $G \cong SmallGroup(180,\ell)$ with $\ell \in \{19,21,26\}$ can be excluded since 
the value of $i$ is not compatible with the lengths of the conjugacy classes of elements in 
$G$.

The case $G \cong SmallGroup(180,23)$ can be excluded since $G$ acts on the vertexes of a 
self-polar triangle but its order does not divide the order of the maximal subgroup of 
$PGU(3,71)$ fixing globally a self-polar triangle; see \cite[Thm. A.10]{HKT}.

{\bf $|G|=189$.} Writing $\deg(\Delta)=2700=72 \cdot i + 2 \cdot j+3 \cdot k$ for $6 \leq 
i+j+k \leq 179$ we get that $(i,j,k)=(32,3m,2(66-m))$ for some $m=2,\ldots,24$, or 
$(i,j,k)=(33,3m,2(54-m))$ for some $m=2,\ldots,47$, or $(i,j,k)=(34,3m,2(42-m))$ for some 
$m=2,\ldots,42$, or $(i,j,k)=(35,3m,2(30-m))$ for some $m=2,\ldots,30$, or 
$(i,j,k)=(36,3m,2(18-m))$ for some $m=2,\ldots,18$, or $(i,j,k)=(37,3m,2(6-m))$ for some 
$m=2,\ldots,6$. Forcing $G$ to contain no elements of order in $\{27,189\}$, as they do not 
occur as orders of elements in $PGU(3,71)$ and the numerical conditions above, we get $G 
\cong SmallGroup(189,\ell)$ with $\ell \in \{3,\ldots,13\}$. We have that $k \geq 1$ and $G$ 
normalizes a cyclic group of order $7$, which is generated by an element of type (B2) by 
\cite[Thm. 2.7]{MZ}. Such a group fixes at least a point $P \not\in \cH_{72}$ which is 
$\mathbb{F}_{71^2}$-rational and this implies that $G$ cannot contain Singer subgroups. 
Therefore all these cases can be excluded.

{\bf $|G|=192$.} Writing $\deg(\Delta)=2664=72 \cdot i + 2 \cdot j+3 \cdot k$ for $0 \leq 
i+j+k \leq 179$ we get that $(i,j,k)=(31,3m,2(72-m))$ for some $m=0,\ldots,16$, or 
$(i,j,k)=(32,3m,2(60-m))$ for some $m=0,\ldots,39$, or $(i,j,k)=(33,3m,2(48-m))$ for some 
$m=0,\ldots,48$, or $(i,j,k)=(34,3m,2(36-m))$ for some $m=0,\ldots,36$, or 
$(i,j,k)=(35,3m,2(24-m))$ for some $m=0,\ldots,24$, or $(i,j,k)=(36,3m,2(12-m))$ for some 
$m=0,\ldots,12$, or $(i,j,k)=(37,0,0)$. Denote by $o_i$ the number of elements of order $i$ 
in $G$. Forcing, as for the previous case, $G$ to have no elements of order in 
$\{64,96,192\}$ and $o_3 \geq 24$ for $m=0$, $o_3 \geq 8$ for $m=32$, $o_3 \geq 16$ for 
$m=16$, we get that $G \cong SmallGroup(192,\ell)$ with $$\ell \in 
\{3,4,57,58,78,79,80,81,180,181,182,184,\ldots,199,201,\ldots,204,944,$$ 
$$955,956,992,1000,1001,1002,1008,1009,1020,\ldots,1025,1489,\ldots,1495,$$ 
$$1505,\ldots,1508,1509,1538,1540,1541\}.$$

The cases $G \cong SmallGroup(192,\ell)$ with $\ell \in 
\{57,58,78,79,80,81,186,187,203,204\}$ can be excluded as $G$ normalizes an involution and 
hence it cannot contain Singer subgroups, but $m \ne 0$.

in the remaining cases a contradiction is obtained observing that the elements of order $3$ 
form a unique conjugacy class in $G$ and that $o_3$ is not equal to any value which is 
admissible for $k$.

{\bf $|G|=210$.} Writing $\deg(\Delta)=2448=72 \cdot i + 2 \cdot j+3 \cdot k$ for $10 \leq 
i+j+k \leq 209$ we get that $(i,j,k)=(27,3m,2(84-m))$ for some $m=4,\ldots,14$, or 
$(i,j,k)=(28,3m,2(72-m))$ for some $m=4,\ldots,37$, or $(i,j,k)=(29,3m,2(68-m))$ for some 
$m=4,\ldots,60$, or $(i,j,k)=(30,3m,2(48-m))$ for some $m=4,\ldots,48$, or 
$(i,j,k)=(31,3m,2(36-m))$ for some $m=4,\ldots,36$, or $(i,j,k)=(32,3m,2(24-m))$ for some 
$m=4,\ldots,24$, or $(i,j,k)=(33,3m,2(12-m))$ for some $m=4,\ldots,12$. By direct checking 
with MAGMA $G \cong SmallGroup(210,\ell)$ with $\ell \in \{5,6,8,9,10,12\}$. All these cases 
can be excluded as $G$ cannot contain a sufficient number of homologies with respect to 
$\deg(\Delta)$.

{\bf $|G|=216$.}  Writing $\deg(\Delta)=2376=72 \cdot i +3 \cdot k$ for $0 \leq i+k \leq 
215$ we get that $(i,k) \in 
\{(26,168),(27,144),(28,120),(29,96),(30,72),(31,48),(32,24),(33,0)\}$. By direct checking 
with MAGMA, forcing $G$ to contain no elements of order in $\{27,54,108,216\}$, we get $G 
\cong SmallGroup(216,\ell)$ with $\ell \in \{12,\ldots,20,25,\ldots,177\}$.

Cases $G \cong SmallGroup(216,12)$ and $G \cong SmallGroup(216,13)$ cannot occur as $G$ 
fixes the vertexes of a self-polar triangle and $G$ is not abelian.

Assume that $G \cong SmallGroup(216,14)$. A contradiction can be obtained since $2 \mid 
|Z(G)|$ and hence every element of even order generating a cyclic group containing an 
involution of the center cannot be an homology without being central. This yields $G$ to 
contain a small number of homologies.

Cases $G \cong SmallGroup(216,15)$ and $G \cong SmallGroup(216,16)$ cannot occur as 
subgroups of $PGU(3,71)$, since $Z(G)$ must contain at least a subgroup of order $3$.

Case $G \cong SmallGroup(216,17)$ can be excluded as $G$ contains $16+11$ homologies, a 
contradiction.

Assume that $G \cong SmallGroup(216,18)$. In this case $G$ fixes the vertexes of a 
self-polar triangle $T$ and it is abelian. This implies that $G$ is contained is the 
stabilizer in $PGU(3,71)$ of $T$ which is abelian and isomorphic to $C_{72} \times C_{72}$. 
Suppose that $G$ is such that $g(\cH_{72} / G)=7$. Then the quotient curve $\cH_{72}/G$ 
would inherit the quotient group $C_{72} \times C_{72} / G$ which is abelian of order $24$. 
Since $PSL(2,8)$ does not contain abelian subgroup of order $216$, the claim follows.

Assume that $G \cong SmallGroup(216,19)$ or $G \cong SmallGroup(216,20)$. Since $G$ has a 
unique involution, as before, $G$ cannot contain Singer subgroups and hence $i=33$. Since 
$G$ contains $4$ distinct subgroups isomorphic to $C_3 \times C_3$, $G$ contains $16$ 
homologies of order $3$. The center $Z(G)$ is generated by a homology of order $24$. This 
proves that $G$ contains at least $39$ homologies, a contradiction.

Assume that $G \cong SmallGroup(216,25)$. As in the previous cases $i=33$, $G$ fixes a point 
$P \not\in \cH_{72}$ which is $\mathbb{F}_{71^2}$-rational, and $G$ contains no Singer 
subgroups. This group cannot occur as a subgroup of $PGU(3,71)$ as it contains $4$ groups 
isomorphic to $C_3 \times C_3$ sharing a fixed homology of order $3$ having center at $P$. 
Since they generate $8$ elements of order $3$ and of type (B1), we have a contradiction to 
the conjugacy class structure of elements of order $3$ in $G$.

The cases $G \cong SmallGroup(216,\ell)$ with $\ell \in 
\{26,27,30,\ldots,86,100,\ldots,152\}$ cannot occur as a subgroup of $PGU(3,71)$ as $G$ has 
to contain at least a central element of order $3$, while $3 \nmid |Z(G)|$.

The cases $G \cong SmallGroup(216,\ell)$ with $\ell \in \{28,29\}$ can be excluded since $G$ 
contains too many involutions with respect to the desired value of $i$.

Assume that $G \cong SmallGroup(216,87)$. In this case $G$ contains too many involution 
compared to the desired value for $i$.

The case $G \cong SmallGroup(216,88)$ cannot occur as a subgroup of $PGU(3,71)$ as $G$ 
contains $4$ elementary abelian groups of order $9$, which generate exactly $8$ elements of 
type (B1) and order $3$. This fact is in contradiction with the structure of the conjugacy 
classes of elements of order $3$ in $G$.

The cases $G \cong SmallGroup(216,\ell)$ with $\ell \in \{89,90,91,98\}$ cannot occur as $G$ 
must contain at least a central element of order $2$.

If $G \cong SmallGroup(216,92)$ then, looking at the structure of the elementary abelian 
subgroups of $G$, $G$ contains at least $12 \cdot 4 + 2+ 18 + 3$ homologies, a 
contradiction.

The cases $G \cong SmallGroup(216,\ell)$ with $\ell \in \{93,94,96,97,99,165\}$ can be 
excluded as $G$ contains too many involution with respect to the desired value for $i$.

If $G \cong SmallGroup(216,95)$ then $G$ contains at least $13 \cdot 4 + 2 + 18 + 3$ 
homologies, a contradiction.

If $G \cong SmallGroup(216,\ell)$ with $\ell \in 
\{153,154,156,157,158,159,160,161,162,163,164,166,167\}$, then $G$ contains too many 
homologies with respect to the desired value for $i$. This fact can be deduced looking at 
the number and the structure of subgroups isomorphic to $C_3 \times C_3$ and the number of 
involutions in $G$.

If $G \cong SmallGroup(216,\ell)$ with $\ell=168,\ldots,177$ then $G$ contains an abelian 
subgroup of order $54$, a contradiction to \cite[Thm. 11.79 ]{HKT} as $4g+4=32$.

If $G \cong SmallGroup(216,155)$ then either $(i,k)=(33,0)$ or $(i,k)=(32,24)$. In fact the 
lengths of the conjugacy classes $C_1$, $C_2$ and $C_3$ of elements of order $3$ are 
$|C_1|=8$ are $|C_2|=1$, $|C_3|=4$ and $\sigma \in C_1$ cannot be such that $i(\sigma)=3$ 
since it is a power of an element of order $6$. Assume that $(i,k)=(33,0)$. We note that the 
elements of order $3$ generate $13$ elementary abelian groups of order $9$. Since $k=0$ each 
of them must contain exactly two elements of type (B1) and $6$ elements of type (A). Since 
one elementary abelian group is given by $8$ elements of order $3$ which are all conjugated, 
this case cannot occur. Assume now that $(i,k)=(32,24)$. In particular the number of 
homologies which are not involutions must be equal to $32-9=23$. The elements of order $8$ 
or $12$, which are conjugated, cannot be homologies since their number is strictly greater 
than $23$. If elements of order $6$ or $4$ are generated by homologies then $G$ contains 
exactly either $18+2+9=29$ or $18+9=27$ homologies, a contradiction.

{\bf $|G|=224$.} We write $\deg(\Delta)=2280=72 \cdot i + 2 \cdot j$, for some $6 \leq i+j 
\leq 223$. By direct checking with MAGMA $(i,j) \in 
\{(27,168),(28,132),(29,96),(30,60),(31,24)\}$. Since for every $G \cong 
SmallGroup(224,\ell)$ with $\ell=1,\ldots,197$ we have that $j=96$, we need $(i,j)=(29,96)$. 
Forcing $G$ to contain at least $29$ elements of order a divisor of $72$, we have $G \cong 
SmallGroup(224,\ell)$ with
  $$
  \ell \in \{8,\ldots,11,13,14,16,19,\ldots,26,28,30,31,36,\ldots43,63,\ldots,67,71,
  \ldots 74,
  $$
  $$
  82,\ldots,87,91,\ldots,96,100,101,102,104,107,110,112,115,\ldots,121,128,
  $$
  $$
  129,130,131,137,\ldots,141,143,146,147,174,181,187\}\, .
  $$
  Assume that $G \cong SmallGroup(224,\ell)$ for some $\ell \in 
\{8,9,10,14,16,19,20,\ldots,24,26,36,37,38,41,63,\ldots,67,72,73,74,82,83\}$. Then $G$ fixes 
the vertexes of a self-polar triangle $T$ and $G$ is not abelian, a contradiction.

If $G \cong SmallGroup(224,\ell)$ for $\ell \in \{11,13,25,28,30,31\}$ then $G$ contains at 
least $33$, $33$, $17$, $17$, $15$, $29$ homologies, a contradiction.

If $G \cong SmallGroup(224,\ell)$ with $\ell \in \{39,40,42,43,143,146\}$ then $G$ 
normalizes an element $\gamma$ of type (B2). Denote by $P,Q,R$ the fixed points of $\gamma$; 
then they are $\mathbb{F}_{71^2}$-rational and $P \not\in \cH_{72}$, $Q,R \in \cH_{72}$. 
Thus, $G$ fixes $P$ and $G \leq \mathcal{M}_{71}=Stab_{PGU(3,71)}(P)$. If $\alpha \in G$ is 
a homology then either $\alpha(Q)=Q$ and $\alpha(R)=R$ which implies $\alpha \in 
Z(\mathcal{M}_{71})$ (see \cite[Page 6]{MZq}), or $o(\alpha)=2$ and $\alpha(R)=Q$ as 
homologies act with long orbits outside their center. Since the number of involutions in $G$ 
is not equal to $29$ and $|Z(G)|=2$, we have a contradiction.

If $G \cong SmallGroup(224,71)$ then $G$ contains too many involution, and hence too many 
homologies, with respect to $i=29$.

By direct checking with MAGMA, excluding the cases for which $G$ fixes the vertexes of a 
self-polar triangle but $G$ is not abelian, we just need to analyze $G \cong 
SmallGroup(224,\ell)$ with $\ell \in \{96,101,102,104,107,110,112,137\}$.

If $G \cong SmallGroup(224,\ell)$ with $\ell \in \{96,101,102,104,137\}$ then $G$ contains 
more than $29$ involution and hence more than $29$ homologies, a contradiction.

If $G \cong SmallGroup(224,107)$, $G \cong SmallGroup(224,110)$, $G \cong 
SmallGroup(224,112)$ then $G$ contains at least $23$, $19$, $15$ homologies. The extra 
homologies cannot be found because of the lengths of the conjugacy classes of elements in 
$G$.

{\bf $|G|=240$.} We write $\deg(\Delta)=2088=72 \cdot i + 2 \cdot j + 3 \cdot k$ where $4 
\leq i+j+k \leq 239$. By direct checking with MAGMA, Table \ref{tabella240} gives the 
complete list of possibilities for the triple $(i,j,k)$.
   \begin{center}
\begin{table}[htbp]
\begin{small}
\caption{Admissible values for $(i,j,k)$}\label{tabella240}
\begin{tabular}{|c|c|c|c|}
\hline $i$ & $j$ & $k$ & $m$ \\
\hline\hline 20 & 3m & 2(108-m) & m=2,3\\
\hline 21 & 3m & 2(96-m) & m=2,\ldots,26\\
\hline 22 & 3m & 2(84-m) & m=2,\ldots,49\\
\hline 23 & 3m & 2(72-m) & m=2,\ldots,72\\
\hline 24 & 3m & 2(60-m) & m=2,\ldots,60\\
\hline 25 & 3m & 2(48-m) & m=2,\ldots,48\\
\hline 26 & 3m & 2(36-m) & m=2,\ldots,36\\
\hline 27 & 3m & 2(24-m) & m=2,\ldots,24\\
\hline 28 & 3m & 2(12-m) & m=2,\ldots,12\\
\hline
\end{tabular}
\end{small}
\end{table}
   \end{center}
Also, Table \ref{casi240} gives the complete list of cases $G \cong SmallGroup(240,\ell)$ that have to be considered. 
For each case the required triple $(i,j,k)$ is described.
    \begin{center}
\begin{table}[htbp]
\begin{small}
\caption{Required values for $m$, $k$ and $i$ for $G \cong SmallGroup(240,\ell)$}\label{casi240}
\begin{tabular}{|c|c|c|c|}
\hline $\ell$ & $m$ & $k$ & $i$ \\
\hline\hline $1,\ldots,4$ & 72 & 0 & 23\\
\hline 32 & 64 & 16 & 23 \\
\hline 89,\ldots,91 & 16 & 16 & 27 \\
\hline 92, \ldots, 94 & 32 & 8 & 26\\
\hline 105, \ldots, 110 & 32 & 8$=o_3$ & 26\\
\hline 189 & 16 & 16 & 27\\
\hline 190 & 32 & 8 & 26\\
\hline 191 & 64 & 16 & 23\\
\hline 194 & 32 & 8$=o_3$ & 26\\
\hline 197,198 & 32 & 8$=o_3$ & 26 \\
\hline 204 & 64 & 16 & 27\\
\hline
\end{tabular}
\end{small}
\end{table}
    \end{center}
If $G \cong SmallGroup(224,1)$ then $G$ normalizes an element $\gamma$ of type (B2). Denote 
by $P,Q,R$ the fixed points of $\gamma$; then they are $\mathbb{F}_{71^2}$-rational and $P 
\not\in \cH_{72}$ but $Q,R \in \cH_{72}$. Thus, $G$ fixes $P$ and $G \leq 
\mathcal{M}_{71}=Stab_{PGU(3,71)}(P)$ and if $\alpha \in G$ is a homology then either 
$\alpha(Q)=Q$ and $\alpha(R)=R$ which implies $\alpha \in Z(\mathcal{M}_{71})$ (see 
\cite[Page 6]{MZq}), or $o(\alpha)=2$ and $\alpha(R)=Q$ as homologies act with long orbits 
outside their center. This implies that $G$ contains $39$ homologies, a contradiction.

If $G \cong SmallGroup(240,3)$ then $G$ contains just $7$ homologies, a contradiction.

The cases $G \cong SmallGroup(240,\ell)$ where $\ell \in \{32,89,\ldots,94,191,204\}$ can be 
excluded as $G$ must contain Singer subgroups but elements of order $3$ form a unique 
conjugacy class in $G$ with $o_3 \ne k$.

The cases $G \cong SmallGroup(240,\ell)$ where $\ell \in \{105,107,\ldots,110,189,190\}$ can 
be excluded as $G$ must contain Singer subgroups but all elements of order $3$ are 
normalized by at least one involution and hence they cannot be elements from Singer 
subgroups of $PGU(3,71)$.

The cases $G \cong SmallGroup(240,\ell)$ where $\ell \in \{106,194,197,198\}$ can be 
excluded as $G$ contains too many involution compared to the desired value for $i$.

 {\bf $|G|=243$.} We write $\deg(\Delta)=2052=72 \cdot i + 3 \cdot j$ for $0 \leq i+j \leq 
242$. Also, note that elements of order in $\{27,81,243\}$ cannot exist as they do not occur 
as element of $PGU(3,71)$, see \cite[Lemma 2.2]{MZ}. This yields $G \cong 
SmallGroup(243,\ell)$ with $\ell \in \{47,51,\ldots,67\}$. By direct checking with MAGMA 
$(i,j) \in \{(20,204), 
(21,180),(22,156),(23,132),(24,108),(25,84),(26,60),(27,36),(28,12)\}$ and hence $G$ must 
contain Singer subgroups of order $3$.

If $G \cong SmallGroup(243,47)$ then the elements of order $3$ of $G$ such that their 
contribution to $\deg(\Delta)$ is $3$ are either $0$ or $18$, since $243$ does not divide 
the order of the normalizer of a Singer subgroup of $PGU(3,71)$; see \cite[Thm. A.10]{HKT}. 
Since $18$ is not an admissible value for $k$ we have a contradiction.

If $G \cong SmallGroup(243,\ell)$ for $\ell \in \{51,52,53,54\}$ then, arguing as in the 
previous case, the normal subgroups of order $3$ cannot be Singer subgroups as $243$ does 
not divide the order of the normalizer of a Singer subgroup of $PGU(3,71)$. Since they 
generate an elementary abelian group of order $9$, $G$ contains $3$ normal homologies of 
order $3$, and hence $G$ fixes the vertexes of a self-polar triangle. This proves that $G$ 
cannot contain Singer subgroups, a contradiction.

Assume that $G \cong SmallGroup(243,\ell)$ where $\ell \in 
\{55,56,57,58,59,60,61,62,63,64,65,66\}$, then $G$ contains a central subgroup of order $3$ 
which is generated by a homology. In fact it is a power of more than $9$ different elements 
of order $9$ where at least two of them are of type (B1) and fix different self-polar 
triangles. Thus $G$ fixes a point $P$ off $\cH_{72}$ which is $\mathbb{F}_{71^2}$-rational 
and hence $j=0$, a contradiction.

If $G \cong SmallGroup(243,67)$ then $G$ is an elementary abelian $3$-group. As before $G$ 
contains no Singer subgroups as all the elements of order $3$ are normal in $G$ and $243$ 
does not divide the order of the normalizer of a Singer subgroup of $PGU(3,71)$, a 
contradiction.

{\bf $|G|=252$.} There exist $46$ different structures for groups of order $252$ up to 
isomorphism. We write $\deg(\Delta)=1944=72 \cdot i + 2 \cdot j + 3 \cdot k$ for some $6 
\leq i+j+k \leq 251$. By direct checking with MAGMA Table \ref{tabella252} summarizes the 
possibilities for $(i,j,k)$ with respect to $\deg(\Delta)$.
   \begin{center}
\begin{table}[htbp]
\begin{small}
\caption{Admissible values for $(i,j,k)$}\label{tabella252}
\begin{tabular}{|c|c|c|c|}
\hline $i$ & $j$ & $k$ & $m$ \\
\hline\hline 18 & 3m & 2(108-m) & m=2,\ldots,17\\
\hline 19 & 3m & 2(96-m) & m=2,\ldots,40\\
\hline 20 & 3m & 2(84-m) & m=2,\ldots,63\\
\hline 21 & 3m & 2(72-m) & m=2,\ldots,72\\
\hline 22 & 3m & 2(60-m) & m=2,\ldots,60\\
\hline 23 & 3m & 2(48-m) & m=2,\ldots,48\\
\hline 24 & 3m & 2(36-m) & m=2,\ldots,36\\
\hline 25 & 3m & 2(24-m) & m=2,\ldots,24\\
\hline 26 & 3m & 2(12-m) & m=2,\ldots,12\\
\hline
\end{tabular}
\end{small}
\end{table}
    \end{center}
All the cases $G \cong SmallGroup(252,\ell)$ with $\ell \in 
\{1,2,3,4,5,7,13,14,16,\ldots,19,22,24,43,45\}$ can be excluded as $G$ contains just one 
involution which is central. This implies that non-central elements of even order cannot be 
homologies. We get that the number of homologies in $G$ is strictly less that $18$, a 
contradiction.

If $G \cong SmallGroup(252,6)$ then, denoting by $\varphi$ the Euler totient function, $G$ 
contains exactly $1+2+2+2+\varphi(9) + \varphi(12)+ \varphi(18) + \varphi(36)=35$ 
homologies, a contradiction.

If $G \cong SmallGroup(252,8)$ then, $G$ contains too many involution with respect to the 
desired value of $i$, a contradiction.

If $G \cong SmallGroup(252,9)$ then $G$ fixes the vertexes of a self-polar triangle but $G$ 
is not abelian, a contradiction.

All the cases $G \cong SmallGroup(252,\ell)$ with $\ell \in 
\{10,11,21,23,27,29,31,32,35,38,39,40,42\}$ can be excluded as $G$ normalizes a cyclic 
subgroup of order $7$ which is generated by an element of type (B2), by \cite[Lemma 
2.2]{MZ}. Thus, if $\alpha \in G$ is a homology then either $\alpha \in Z(G)$ or 
$o(\alpha)=2$. This proves that $G$ cannot contain $i$ homologies.

All the cases $G \cong SmallGroup(252,\ell)$ with $\ell \in 
\{12,15,25,26,30,33,34,36,37,41,46\}$ can be excluded as $G$ contains too many homologies 
with respect to the desired value of $i$.

The case $G \cong SmallGroup(252,20)$ cannot occur as a subgroup of $PGU(3,71)$ as $Z(G)$ 
must be cyclic.

Assume that $G \cong SmallGroup(252,28)$. Then $G$ fixes a point $P\not\in \cH_{72}$ which 
is $\mathbb{F}_{71^2}$-rational as $G$ normalizes an involution. This implies that $G$ 
cannot contain Singer subgroups and hence all the elements of order $3$ are either of type 
(B1) or homologies. Since $G$ contains $7$ subgroups isomorphic to $C_3 \times C_3$, it 
contains at least $29$ homologies of order $3$. Also, $G$ contains $15$ involution, a 
contradiction.

The case $G \cong SmallGroup(252,44)$ cannot occur as a subgroup of $PGU(3,71)$ as at least 
an element of order $3$ must be central but $3 \nmid |Z(G)|$.

 {\bf $|G|=270$.} There exist $30$ structures for groups of order $252$ up to isomorphism. We 
write $\deg(\Delta)=1728=72 \cdot i + 2 \cdot j + 3 \cdot k$ for some $4 \leq i+j+k \leq 
269$. Forcing $G$ to contain no elements of order in $\{27,54,135,270\}$, we get $G \cong 
SmallGroup(270,\ell)$ with $\ell \geq 5$. By direct checking with MAGMA we have that Table 
\ref{tabella270} summarizes the possibilities for $(i,j,k)$ with respect to $\deg(\Delta)$.
  \begin{center}
\begin{table}[htbp]
\begin{small}
\caption{Admissible values for $(i,j,k)$}\label{tabella270}
\begin{tabular}{|c|c|c|c|}
\hline $i$ & $j$ & $k$ & $m$ \\
\hline\hline 14 & 3m & 2(120-m) & m=2,\ldots,15\\
\hline 15 & 3m & 2(108-m) & m=2,\ldots,38\\
\hline 16 & 3m & 2(96-m) & m=2,\ldots,61\\
\hline 17 & 3m & 2(84-m) & m=2,\ldots,84\\
\hline 18 & 3m & 2(72-m) & m=2,\ldots,72\\
\hline 19 & 3m & 2(60-m) & m=2,\ldots,60\\
\hline 20 & 3m & 2(48-m) & m=2,\ldots,48\\
\hline 21 & 3m & 2(36-m) & m=2,\ldots,36\\
\hline 22 & 3m & 2(24-m) & m=2,\ldots,24\\
\hline 23 & 3m & 2(12-m) & m=2,\ldots,12\\
\hline
\end{tabular}
\end{small}
\end{table}
   \end{center}
If $G \cong SmallGroup(270,\ell)$, $\ell=5,6,7$ then $G$ contains $31$, $7$, $7$ homologies, 
a contradiction.

The cases $G \cong SmallGroup(270,\ell)$ with $\ell \in \{8,9\}$ can be excluded as they do 
not occur as subgroups of $PGU(3,71)$.

The cases $G \cong SmallGroup(270,\ell)$ with $\ell \in \{10,11\}$ can be excluded as $G$ 
normalizes a cyclic subgroup of order $5$, which is generated by an element of type (B2), by 
\cite[Thm. 2.7]{MZ}, and hence the unique homologies of $G$ are either central or of order 
$2$. Since $i$ is not equal to the sum of the number of element of order $2$ and the number 
of the remaining non-trivial elements of $Z(G)$, this case cannot occur.

The cases $G \cong SmallGroup(270,\ell)$ with $\ell \in \{12,14,15,16,18,19,27,28,29\}$ can 
be excluded as $G$ contains too many involutions with respect to the desired number of 
homologies $i$. By direct checking all the remaining cases do not occur as subgroups of 
$PGU(3,71)$.

{\bf $|G|=280$.} We write $\deg(\Delta)=1608=72 \cdot i + 2 \cdot j$ for some $10 \leq i+j 
\leq 279$. By direct checking with MAGMA, $(i,j) \in 
T=\{(15,264),(16,228),(17,192),(18,156),(19,120),(20,84),(21,48),(22,12)\}$. Denote by $o_i$ 
the number of elements of order $i$ in $G$. Since there are no groups of order $280$ with 
$(o_5+o_7+o_{10}+o_{14}+o_{20}+o_{28}+o_{35}+o_{40}+o_{56}+o_{70}+o_{140}+o_{280}) \in 
\{12,48,84,120,156,192,228,264\}$ and $(o_2+o_4+o_8) \geq i_j$ where $i_j$ denotes the value 
of $i$ corresponding to $j= 
(o_5+o_7+o_{10}+o_{14}+o_{20}+o_{28}+o_{35}+o_{40}+o_{56}+o_{70}+o_{140}+o_{280})$ in $T$, 
this case cannot occur.

{\bf $|G|=288$.} As before we can exclude all $SmallGroup(288,\ell)$ with 
$\ell=1,\ldots,1045$ containing elements of order in $\{32,96,288\}$. Writing 
$\deg(\Delta)=1512=72 \cdot i + 2 \cdot j + 3 \cdot k$ with $0 \leq i+j+k \leq 287$ we get 
that $i=10,\ldots,21$. We observe that $G \cong SmallGroup(288,\ell)$ is such that $2 \mid 
|Z(G)|$ unless $\ell \in \{73,74,75,397,406,407,634,635,636\}$. If $2 \mid |Z(G)|$ then $G$ 
cannot contain Singer subgroups as they cannot be centralized by involutions; see \cite[Thm. 
A.10]{HKT}. This implies that $k=0$ and hence $(i,j) \in 
\{(14,258),(15,222),(17,150),(18,114),(19,78),(20,42),(21,6)\}$. Since there are no groups 
of order $288$ with $(o_{16}+o_{48}+o_{144}) \in \{258,222,186,150,114,78,42,6\}$ all these 
cases cannot occur.

Assume that $G \cong SmallGroup(288,\ell)$ with $\ell=73,74,75$. These cases can be excluded 
as $G$ does not contain sufficient elements of order $3$.

Assume that $G \cong SmallGroup(288,\ell)$ with $\ell \in \{397,406,407,634,635,636\}$. These cases can be excluded as the desired values for $k$ are not compatible with the lengths of the conjugacy classes of elements of order $3$ in $G$.

{\bf $|G|=315$.} We can write $\deg(\Delta)=1188=72 \cdot i + 2 \cdot j + 3 \cdot k$, where 
$10 \leq i+j+k \leq 314$. Denote by $o_i$ the number of elements of order $i$ in $G$. By 
\cite[Thm. 2.7]{MZ}, $j=o_5+p_{15}+o_{21}+o_{35}+o_{45}+o_{63}+o_{105}+o_{315}$, while $i 
\leq o_3+o_9$ and $k \leq o_3$.

If $G \cong SmallGroup(315,1)$ then $j=270$ and hence to obtain the right value for 
$\deg(\Delta)$ either $(i,k)=(8,24)$ or $(i,k)=(9,0)$. Since $G$ contains at least $6 \cdot 
7 + 2=44$ homologies, and elements of order $9$ must be homologies, we get a contradiction.

Assume that $G \cong SmallGroup(315,3)$. Then $j=270$ and hence either $(i,k)=(8,24)$ or 
$(i,k)=(9,0)$. Since $G$ contains at least $42$ homologies this case can be excluded.

The case $G \cong SmallGroup(315,4)$ can be excluded as it does not occur as a subgroup of $PGU(3,71)$. 

{\bf $|G|=320$.} We note that $G$ cannot contain elements of order in $\{32,64,160,320\}$ as 
they cannot be contained in $PGU(3,71)$. We write $\deg(\Delta)=1128=72 \cdot i + 2 \cdot j$ 
where $4 \leq i+j \leq 319$. Then by direct checking $(i,j) \in 
\{(7,312),(8,276),(9,240),(10,204),(11,168),(12,132),(13,96),(14,60),15,24)\}$. Since there 
are no groups of order $320$ with $o_2+o_4+o_8 \geq 7$ and $j=319-(o_2+o_4+o_8) \in 
\{312,276,240,204168,132,96,60,24\}$, these cases can be excluded.

{\bf $|G|=324$.} We write $\deg(\Delta)=1080=72 \cdot i + 3 \cdot k$ for $0 \leq i+k \leq 
324$. Then $$(i,k) \in \{(2,312),(3,288),(4,264),(5,240),(6216),(7,192),(8,168),(9,144),$$ 
$$(10,120),(11,96),(12,72),(13,48),(14,24),(15,0)\}.$$ Also, $G$ cannot contain elements of 
order in $\{27,54,81,108,162,324\}$ as they cannot be contained in $PGU(3,71)$.

Assume that $G \cong SmallGroup(324,\ell)$ with $\ell \in \{8,19,23,25\}$. Then $2$ divides 
$|Z(G)|$ and hence $G$ cannot contain Singer subgroups. This case can be excluded as at 
least an element of order $3$ must be contained in $Z(G)$ but $3 \nmid |Z(G)|$.

The cases $G \cong SmallGroup(324,\ell)$ with $\ell \in \{13,14,16,18,20,22,24\}$ can be 
excluded as they do not contain Singer subgroups, because $2 \mid |Z(G)|$ and, looking at 
the number and the intersections of subgroups isomorphic to $C_3 \times C_3$ we get that $G$ 
contains a number of homologies different from $i$.

The cases $G \cong SmallGroup(324,\ell)$ with $\ell \in \{46,47,48,83,87,89,101\}$ cannot 
occur as $G$ fixes the vertexes of a self-polar triangle but $G$ is not abelian.

All the remaining cases $G \cong SmallGroup(324,\ell)$ with $\ell \in 
\{49,50,51,\ldots,176\}$ can be excluded as they do not contain Singer subgroups and looking 
at the number and the intersections of subgroups isomorphic to $C_3 \times C_3$ we get that 
$G$ contains number of homologies different from $i$.

{\bf $|G|=336$.} We write $\deg(\Delta)=936=72 \cdot i + 2 \cdot j + 3 \cdot k$ with $6 \leq i+j+k \leq 335$. 
By direct checking, every $G \cong SmallGroup(336,\ell)$ with $\ell=1,\ldots,228$ and $\ell \ne 114$ has a unique subgroup of order $7$, which is hence characteristic. 

Case $G \cong SmallGroup(336,114)$ cannot occur as a subgroup of $PGU(3,71)$. Thus we can 
assume that $G$ has a unique Sylow $7$-subgroup, say $S$. Since a generator of $S$ is of 
type (B2) from \cite[Thm. 2.7]{MZ}, $G$ fixes a point $P\not\in \cH_{72}$ which is 
$\mathbb{F}_{71^2}$-rational and $G$ does not contain Singer subgroups. In all the remaining 
admissible cases we have that $(i,j) \in \{(9,144),(5,288)\}$. Assume that $j=288$. Then $G 
\cong SmallGroup(336,\ell)$ with $\ell \in 
\{56,74,75,78,\ldots,83,86,88,89,106,\ldots,113,115117,168,\ldots,170,190,192,204\}$.

Assume that $G \cong SmallGroup(336,56)$. Then $Z(G) \cong C_7$ and hence every element of 
order $3$ is a homology. Since this implies that $i>5$ we have a contradiction.

A contradiction to $i=5$ is obtained also for $G \cong SmallGroup(336,\ell)$ with $\ell \in 
\{74,75,78,80,86,88,109,110,111,112,113,115,117,168,170,204\}$.

If $G \cong SmallGroup(336,\ell)$ with $\ell \in \{79,81,82,83,89,106,107,108,169,192\}$ 
then $G$ fixes the vertexes of a self-polar triangle and $G$ is not abelian, a 
contradiction.

The case $G \cong SmallGroup(336,190)$ cannot occur as a subgroup of $PGU(3,71)$ as the involution obtained as a power of an element of order $14$ must be central.
Assume that $j=144$. Then $G \cong SmallGroup(336,\ell)$ with $\ell \in \{38,39,40,62,72,94,104\}$. Since in all these cases the number of homologies of $G$ is $3$ while $i=9$, we have a contradiction.

{\bf $|G|=360$.} We write $\deg(\Delta)=648=72 \cdot i + 2 \cdot j + 3 \cdot k$ where $4 
\leq i+j+k \leq 359$. Thus, $i \leq 8$. We note that for every $G \cong 
SmallGroup(360,\ell)$ with $\ell=1,\ldots,162$ and $\ell ne 51$, $G$ has a unique Sylow 
$5$-subgroup, which is generated by an element of type (B2) from \cite[Thm. 2.7]{MZ}. This 
implies, as before, that $G$ fixes a point $P$ off $\cH_{72}$ and every element of order $3$ 
or $9$ is a homology. Denote by $o_i$ the number of elements of order $i$ in $G$. Since for 
every $\ell \ne 51$ we have that $o_2+o_3+o_9 \geq 9$, while $i \leq 8$ we have a 
contradiction.

Thus, assume that $G \cong SmallGroup(336,51)$. Since $G$ contains $5$ homologies which 
belong to $Z(G)$ whereas all the elements of even order dividing $72$ are of type (B1), we 
have that $G$ contains exactly $5+20 \cdot i_1 + 40 \cdot i_2$ for some $i_1$ and $i_2$, a 
contradiction.

{\bf $|G|=378$.} We write $\deg(\Delta)=432=72 \cdot i + 2 \cdot j + 3 \cdot k$, where $6 
\leq i+j+k \leq 377$. Since $i \leq 5$, $G$ must contain at most $5$ involutions and it 
contains no elements of order in $\{27,54,189,378\}$. These conditions yield $G \cong 
SmallGroup(378,\ell)$ with $\ell \in 
\{2,6,16,,23,24,25,26,27,28,33,44,45,,46,48,52,54,60\}$. All these cases can be excluded 
observing that $G$ has a unique Sylow $7$-subgroup, which implies that every element of 
order $3$ or $9$ is a homology. This yields $i>5$, a contradiction.

{\bf $|G|=384$.} As before, $G$ contains no elements of order in $\{32,64,96,128,192,384\}$ 
and we write $\deg(\Delta)=360=72 \cdot i + 2 \cdot j + 3 \cdot k$ for $0 \leq i+j+k \leq 
383$. By direct checking with MAGMA $i \leq 4$ and $G \cong SmallGroup(384,\ell)$ with $\ell 
\leq 20169$ has $3$ involutions. Thus, $G$ contains at most one extra homology. If $G$ 
contains one extra homology then $G$ contains at least $2$ extra homology, since its order 
is at least equal to $3$. Thus $i=3$ and $G$ contains no homology of order different from 
$2$. When $o_2 eq 3$ a direct checking with MAGMA shows that $2 \mid |Z(G)|$, hence $G$ 
normalizes an involution and $G$ fixes a point $P\not\in \cH_{72}$ which is 
$\mathbb{F}_{71^2}$-rational. Also, $G$ normalizes a group of type $C_2 \times C_2$ and acts 
on the vertexes of a self-polar triangle. This implies that every element of order $3$ is an 
homology. Since $G$ contains at least $2$ elements of order $3$ we have a contradiction.

{\bf $|G|=405$.} Here $G$ contains no elements of order in $\{81,135,405\}$ and we write 
$\deg(\Delta)=108=72 \cdot i + 2 \cdot j + 3 \cdot k$, obtaining either 
$(i,j,k)=(0,3m,2(18-m))$ for some $m=2,\ldots,54$, or $(i,j,k)=(1,3m,2(6-m))$ for some 
$m=2,\ldots,6$. By direct checking with MAGMA if $G \cong SmallGroup(405,\ell)$ with $\ell 
\ne 15$ then $G$ has a unique Sylow $5$-subgroup. This implies that every element of order 
$3$ is a homology and then $i \geq 2$, a contradiction. Thus we assume that $G \cong 
SmallGroup(405,15)$. In this case $j=o_5+o_{15}+o_{27}+o_{45}=324$. Since $324$ is not an 
admissible value for $j$ we have a contradiction.
  \end{proof}
  \begin{question} From Theorem \ref{nonga} the Fricke-Macbeath curve $\mathcal{F}$ is an 
$\F_{71}^2$-maximal curve which is not a Galois subcover of the Hermitian curve $\cH_{72}$ 
over $\mathbb{F}_{71^2}$. It is still an open problem to determine whether $\mathcal{F}$ is 
covered by $\cH_{72}$ or not. A positive answer would provide with the first known example 
of a maximal curve which is covered but not Galois covered by the Hermitian curve over the 
finite field of maximality. Otherwise, $\mathcal{F}$ would be the first known example of an 
$\mathbb{F}_{p^2}$-maximal curve which is not covered by the Hermitian curve over a field of 
order $p^2h$ with $h\not\equiv 0\pmod{3}$.
   \end{question}

\section*{Acknowledgement} The authors would like to thank Massimo Giulietti for numerous 
discussions on the topic which led to significant improvements. The third author would like 
to thank Università degli Studi di Perugia, for the financial support received during his 
academic visit in January-February 2017; he also was partially suported by CNPq-Brazil 
(grant 308326/2014-8). This research was partially supported by Ministry for Education, University and
Research of Italy (MIUR) (Project PRIN 2012 \textit{Geometrie di Galois e strutture di incidenza}-Prot. N.
2012XZE22K$\textunderscore$005) and by the Italian National Group for Algebraic and Geometric Structures and their
Applications (GNSAGA-INdAM).

  \end{document}